\title[Instability in Vlasov-Poisson around rough profiles]{Nonlinear instability in Vlasov type equations around rough velocity profiles}
\author{Aymeric \textsc{Baradat}}
\address{CMLS, \'Ecole Polytechnique, 91128 Palaiseau, France}
\email{aymeric.baradat@polytechnique.edu}
\date{}
\theoremstyle{plain}
\newtheorem{Thm}{Theorem}[section]
\newtheorem{Cor}[Thm]{Corollary}
\newtheorem{Prop}[Thm]{Proposition}
\newtheorem{Lem}[Thm]{Lemma}
\theoremstyle{definition}
\newtheorem{Def}[Thm]{Definition}
\newtheorem{Ass}[Thm]{Assumption}
\theoremstyle{remark}
\newtheorem{Rem}[Thm]{Remark}
\newcommand{\R}{\mathbb{R}}
\newcommand{\Z}{\mathbb{Z}}
\newcommand{\T}{\mathbb{T}}
\newcommand{\N}{\mathbb{N}}
\newcommand{\C}{\mathbb{C}}
\newcommand{\X}{\mathcal{X}}
\newcommand{\cg}{\langle}
\newcommand{\cd}{\rangle}
\newcommand{\1}{\mathds{1}}
\newcommand{\eps}{\varepsilon}
\newcommand{\vv}{\boldsymbol{v}}
\newcommand{\uu}{\boldsymbol{u}}
\newcommand{\rr}{\boldsymbol{r}}
\newcommand{\rhorho}{\boldsymbol{\rho}}
\newcommand{\sigsig}{\boldsymbol{\sigma}}
\newcommand{\xixi}{\boldsymbol{\xi}}
\newcommand{\FF}{\boldsymbol{f}}
\newcommand{\GG}{\boldsymbol{g}}
\newcommand{\XX}{\boldsymbol{X}}
\newcommand{\pp}{\boldsymbol{p}}
\newcommand{\qq}{\boldsymbol{q}}
\newcommand{\VERT}{\big| \hspace{-1pt} \big| \hspace{-1pt} \big|}
\DeclareMathOperator{\Div}{div}
\DeclareMathOperator{\bDiv}{\textbf{div}}
\DeclareMathOperator{\D}{d\!}
\DeclareMathOperator{\Id}{Id}
\DeclareMathOperator{\DD}{\!D\!}
\begin{document} 
\begin{abstract}
In the Vlasov-Poisson equation, every configuration which is homogeneous in space provides a stationary solution. Penrose gave in \cite{pen60} a criterion for such a configuration to be linearly unstable. While this criterion makes sense in a measure-valued setting, the existing results concerning nonlinear instability always suppose some regularity with respect to the velocity variable. Here, thanks to a multiphasic reformulation of the problem, we can prove an "almost Lyapounov instability" result for the Vlasov-Poisson equation, and an ill-posedness result for the kinetic Euler equation and the Vlasov-Benney equation (two quasineutral limits of the Vlasov-Poisson equation), both around any unstable measure. 
\end{abstract}
\keywords{Vlasov--Poisson; kinetic Euler; Vlasov--Benney; Nonlinear instability; Ill-posedness; Penrose condition}
\maketitle
 \tableofcontents
 \section{Introduction}
 The purpose of this paper is to study the nonlinear instability of a general class of Vlasov equations (see Section \ref{model}), \emph{i.e.} evolution equations for the density of a system of particles in the phase space. This class of equations contains the Vlasov-Poisson equation and some of its asymptotic limits. The specificity of this work is the fact that we deal with the case when for all $(t,x)$, $f(t,x,\bullet)$ is only supposed to be a measure. Let us first describe the physical models we have in mind. 
 \subsection{Presentation of the physical models}
 \label{physicalmodels}
 We will apply our abstract result to three models studied in the field of plasma physics: the Vlasov-Poisson equation for electrons, the kinetic Euler equation and the Vlasov-Benney equation. Let us present them one by one. 
 \subsubsection*{The Vlasov-Poisson equation for electrons} A population of electrons of unit mass and unit negative charge moving in a homogeneous environment of fixed particles of positive charge can be described by a Vlasov-Poisson type equation. If the domain is the $d$-dimensional torus $\T^d := \R^d / \Z^d$, this equation governs the evolution over time of the density of electrons $f = (f(t,x,v), t \in [0,T], x \in \T^d, v \in \R^d)$ in the phase space $\T^d \times \R^d$. They write in the following way:
\begin{equation}
\label{VP}
\left\{ \begin{gathered}
\partial_t f(t,x,v) + v\cdot \nabla_x f(t,x,v) - \nabla_x U(t,x) \cdot \nabla_v f(t,x,v) = 0,\\
- \Delta_x U(t,x) = \int f(t,x,v) \D v - 1,\\
f(0,x,v)= f_0(x,v).
\end{gathered} \right.
\end{equation} 
It means that the electrons follow the Newton dynamics in the electric potential $U$ they induce together with the fixed charges. This potential is obtained through an elliptic equation involving the density of electrons in space. 

This equation is of major interest in plasma physics, and so has been extensively studied. Among the huge literature about it, global existence of classical solutions to the Cauchy problem has been obtained in dimension 2 by Ukay--Okabe in \cite{ukai1978classical}, and in dimension 3 by Lions--Perthame in \cite{lions1991propagation} and by Pfaffelmoser in \cite{pfaffelmoser1992global}. We refer to \cite{glassey1996cauchy} for an overview of the subject.

\subsubsection*{The kinetic Euler equation} This equation is deduced from the previous one in the regime of \emph{small Debye length}, also called \emph{quasineutral limit} (see \cite{grenier1996oscillations}). It reads
\begin{equation}
\label{KEu}
\left\{ \begin{gathered}
\partial_t f(t,x,v) + v\cdot \nabla_x f(t,x,v) - \nabla_x p(t,x) \cdot \nabla_v f(t,x,v) = 0,\\
\int f(t,x,v) \D v = 1,\\
f(0,x,v) =f_0(x,v).
\end{gathered} \right.
\end{equation} 
It can be seen as a kinetic version of the Euler equation for incompressible fluids: as in the hydrodynamic case, the particles follow the Newton dynamics in a \emph{pressure field} $p$, which is the Lagrange multiplier associated to the constraint
\begin{equation}
\label{incompressibilityconstraint}
\int f(t,x,v) \D v = 1.
\end{equation}
Incidentally, to any monokinetic solutions to \eqref{KEu} corresponds a solution to the Euler equation and \emph{vice versa}.

This analogy goes further. Indeed, this equation is linked to an optimization problem, the so-called Brenier model (see for example \cite{bre89,bre93,amb09}). Following ideas by Arnold (in \cite{arn66,arn99}), this model aims to understand the behaviour of incompressible fluids as the geodesics of the set of measure-preserving diffeomorphisms, which is seen as a formal Riemannian manifold of infinite dimension. In the smooth case (considered by Arnold), the geodesic equation is nothing but the Euler equation, whereas in general, as shown by Shnirelman in \cite{shn87}, we cannot prevent particles from crossing each other, and we obtain solutions to the kinetic Euler equation (at least in a weak sense). A study of \eqref{KEu} with PDE techniques provides information on the optimization problem: using the present paper, the author shows in \cite{Baradat2018kinetic} that the optimal action in the Brenier model, although continuous (see \cite{baradat2018continuous}) cannot be Lipschitz continuous with respect to the data.

\subsubsection*{The Vlasov-Benney equation}  This equation is another formal limit of the Vlasov-Poisson equation in the quasineutral limit. But this time, it corresponds to the case when we look at the evolution of the population of ions whose masses are far higher than electrons ones. It reads
\begin{equation}
\label{VB}
\left\{ \begin{gathered}
\partial_t f(t,x,v) + v\cdot \nabla_x f(t,x,v) - \nabla_x \rho(t,x) \cdot \nabla_v f(t,x,v) = 0,\\
\rho(t,x) = \int f(t,x,v) \D v ,\\
f(0,x,v) = f_0(x,v).
\end{gathered} \right.
\end{equation}
We refer for instance to \cite{han2011quasineutral} for its derivation. The study of this Cauchy problem has aroused great interest in the last few years, as evidenced by the works of Bardos \cite{bardos2012variant}, Bardos--Besse \cite{bardos2013cauchy,bardos2015hamiltonian}, Han-Kwan--Rousset \cite{han2016quasineutral} and references therein.
 \subsection{Homogeneous profiles and the Penrose condition}
 \label{presentationpenrosecondition}
 The three equations \eqref{VP}, \eqref{KEu} and \eqref{VB} admit stationary solutions of a particular form: those which depend only on the velocity variable. In each case, any smooth profile $\mu = (\mu(v))$ satisfying
\[
\int \mu(v) \D v = 1
\] 
 gives rise to a stationary homogeneous solution. The goal of the present work is to study the nonlinear instability of the three models around such profiles.
 
 At the linear level, the question of linear stability dates back to the late 50's and resulted in the seminal paper \cite{pen60}. In this article, Penrose gave in the context of the Vlasov-Poisson equation \eqref{VP} a necessary and sufficient condition on a profile $\mu$ to be linearly unstable. Let us present this condition. For given $n \in \Z^d$ and $\omega \in \C^d$, the linearization of the Vlasov-Poisson equation \eqref{VP} around a smooth profile $\mu$ admits a solution of the form
 \begin{equation}
 \label{defexponentialgrowingmode}
a(v) \exp \Big(in\cdot(x - \omega t) \Big)
\end{equation}
for some function $a$ if and only if $(n, \omega)$ satisfies the equation
  \begin{equation}
 \label{penroseVP}
\int_{\R^d} \frac{n \cdot \nabla_v \mu(v)}{n \cdot (v - \omega)} \D v = |n|^2.
 \end{equation}
If in addition $\Im(n \cdot \omega) >0$, then this solution is an \emph{exponential growing mode}, and the stationary solution $\mu$ turns out to be linearly unstable. Therefore, for the Vlasov-Poisson equation \eqref{VP}, we can give the following criterion for exponential growing modes to exist.
\begin{Def}[Penrose instability condition for Vlasov-Poisson]
\label{def:Penrose_VP}
 The smooth profile $\mu$ is said to be Penrose unstable for the Vlasov-Poisson \eqref{VP} equation if there exist $n \in \Z^d$ and $\omega \in \C^d$ such that $\Im(n \cdot \omega) >0$ and satisfying \eqref{penroseVP}. 
\end{Def}
 
 In the other models, similar formulae can be found, and lead to the following definitions. 
 \begin{Def}[Penrose instability condition for kinetic Euler]
 \label{def:Penrose_KEu}
  The smooth profile $\mu$ is said to be Penrose unstable for the kinetic Euler equation \eqref{KEu} if there exist $n \in \Z^d$ and $\omega \in \C^d$ such that $\Im(n \cdot \omega) >0$ and satisfying  
   \begin{equation}
   \label{penroseKEu}
  \int_{\R^d} \frac{n \cdot \nabla_v \mu(v)}{n \cdot (v - \omega)} \D v = 0.
   \end{equation}
 \end{Def}
 \begin{Def}[Penrose instability condition for Vlasov-Benney]
 \label{def:Penrose_VB}
  The smooth profile $\mu$ is said to be Penrose unstable for the Vlasov-Benney equation \eqref{VB} if there exist $n \in \Z^d$ and $\omega \in \C^d$ such that $\Im(n \cdot \omega) >0$ and satisfying  
   \begin{equation}
   \label{penroseVB}
  \int_{\R^d} \frac{n \cdot \nabla_v \mu(v)}{n \cdot (v - \omega)} \D v = 1.
   \end{equation}
 \end{Def}
 
Once again, in the three cases, $\mu$ is Penrose unstable for one model if and only if it is linearly unstable when considered as a stationary solution to this model.
 
In these three cases, classical examples of stable profiles are the ones admitting a unique maximum. For example, a Maxwellian is always stable. On the contrary, profiles with two bumps like the superposition of two sufficiently distant Maxwellian are unstable. We refer to \cite{pen60} to see how to deduce from formulae \eqref{penroseVP}, \eqref{penroseKEu} and \eqref{penroseVB} if a profile $\mu$ is stable or not using complex analysis.
 
 \subsection{Known results for nonlinear instability}
 In the case when $\mu$ is Penrose unstable, it is possible to derive nonlinear instability results. We will present some known results in this subsection. 
 
 But before doing it, let us point out a crucial difference between formula \eqref{penroseVP} and the two formulae \eqref{penroseKEu} and \eqref{penroseVB}. In the two last ones, as soon as we can find $n \in \Z^d$ and $\omega \in \C^d$ such that $\Im(n \cdot \omega) >0$ and satisfying \eqref{penroseKEu} or \eqref{penroseVB}, then for all $k \in \N^*$, $kn$ and $\omega$ satisfy the same properties (this is a consequence of the scale invariance of these equations as explained in Subsection \ref{paragraphhiglyunbounded}). In view of \eqref{defexponentialgrowingmode}, it means that for any unstable profile, we can find exponential growing modes with arbitrary large frequency $n$ and with growing rate $\Im(n \cdot \omega)$ proportional to this large frequency. The instability is therefore far more violent in these cases. This additional property is the reason why the results that we will present are not the same for the Vlasov-Poisson equation and for the two other models.
 
 We also insist on the fact that in all the results presented below, $\mu$ is supposed to be smooth ($C^1$ in the case of Guo and Strauss, and analytic in the other cases). It also have to satisfy a technical assumption on the way it cancels (see the so-called $\delta$ and $\delta'$-conditions in \cite{han2015stability}, designed to ensure that the solutions built are nonnegative). We will see in Section \ref{sec:new_results} that we can drop these assumptions: we are able to recover some of these results only assuming that $\mu$ is a measure. 
 
\subsubsection*{Lyapounov instability for Vlasov-Poisson} To our knowledge, the first result of nonlinear instability for the Vlasov-Poisson equation \eqref{VP} was proved by Guo and Strauss in \cite{guo1995nonlinear}. It consists in a Lyapounov instability result in the $C^1$ norm in both variables $x$ and $v$.
 
More recently, Han-Kwan--Hauray (in \cite{han2015stability}, in the case of dimension one) and Han-Kwan--Nguyen (in \cite{han2016nonlinear}, in any dimension) showed that the Penrose instability of a smooth profile $\mu= (\mu(v))$ can be used to build a family $(f_{k})_{k \in \N}$ of solutions to \eqref{VP} and a family of times $(T_k)_{k \in \N}$ with 
\[
f_{k}|_{t=0} \underset{k \to + \infty}{\longrightarrow}\mu 
\]
strongly in any $H^{s}_{x,v}$ but,
\[
\|f_{k} - \mu\| \underset{k \to + \infty}{\cancel{\longrightarrow}} 0, 
\]
where the norm is the one of $L^{\infty}([0,T_k); H^{s'}_{x,v})$ whatever $s'\in\Z$. Roughly speaking, the Lyapounov instability holds even if the initial data is taken close to the equilibrium in a very strong topology, and even if we measure the distance to the equilibrium at further times in a very weak norm. In that case, the sequence $(T_k)$ is of the following order:
\begin{equation*}
T_k \underset{k \to + \infty}{\sim} \big( |\log \eps_k | \big) \quad \mbox{with} \quad \eps_k := \| f_k|_{t=0} - \mu \|_{H^{s'}_{x,v}} .
\end{equation*}
It means that the exponential growing rate of the solutions to the linearized problem prevails.

In a slightly different context, let us also mention Cordier--Grenier--Guo \cite{cordier2000two} who proved a similar result for several systems of equations governing plasmas with two phases in one space dimension (related to the Euler-Poisson system). Somehow, we present here a framework that encompasses the classical kinetic setting and this kind of multiphase settings.  
\subsubsection*{Ill-posedness for kinetic Euler and Vlasov-Benney}  In \cite{han16}, following ideas by M\'etivier (see \cite{metivier2005remarks}), Han-Kwan and Nguyen proved that \eqref{KEu} and \eqref{VB} are ill-posed in any $H^s_{x,v}$ in the following sense: they show for instance that for any $s \in \N$ and any $T>0$, the map
\[
\begin{array}{c c c}
H^s_{x,v} & \to& L^2([0,T)\times \T^d \times \R^d) \\
f_0 & \mapsto& f \mbox{ solution to \eqref{KEu} or \eqref{VB}},
\end{array}
\]
if exists, cannot be H\"older continuous with any exponent in $(0,1]$ in the neighbourhood of any smooth linearly unstable profile. To do so, for a fixed analytic Penrose unstable profile $\mu$, $s>0$ and $\alpha \in (0,1]$, they build a sequence of times $(T_k)$ tending to $0$ and a sequence $(f_k)$ of analytic solutions, such that for all $k$, $f_k$ is well defined up to time $T_k$, and such that
\begin{equation*}
\lim_{k \to + \infty} \frac{\| f_k - \mu \|_{L^2([0,T_k) \times \T^d \times \R^d)}}{\| f_k|_{t=0} - \mu \|^{\alpha}_{H^s_{x,v}}} = + \infty.
\end{equation*}
This time, $(T_k)$ is of order:
\begin{equation*}
T_k = \underset{k \to + \infty}{\mathcal{O}} \left( \frac{|\log \eps_k |}{|n_k|} \right), 
\end{equation*}
where $\eps_k := \| f_k|_{t=0} - \mu \|_{L^2_{x,v}}$ and $n_k$ is the spatial frequency of the nearest exponential growing mode. The solution $f_k$ is of size $\eps_k$ at time $0$ and close to an exponential growing mode of spatial frequency $n_k$ and of proportional growing rate, and once again the exponential growing rate of the solution to the linearized problem prevails.

This result is a quantitative extension of \cite[Theorem 4.1]{bardos2012vlasov} by Bardos and Nouri, where it is proved that \eqref{VB} is ill-posed from $H_{x,v}^m$ to $H_{x,v}^1$ for any $m \in \N^*$. 
\section{New result: the case of non-smooth stationary profiles}
\label{sec:new_results}

As already said, the aim of this paper is to generalize these results in the case when the velocity profile $\mu$ and the density $f$ are no longer smooth in the variable $v$ but only measures. We start by defining a notion of solution in that setting. These solutions are regular with respect to the time variable and the space variable and measures with respect to the velocity variable.
 \subsection{Measure-valued solutions}
 We will be dealing with functions $f: [0,T] \times \T^d \to \mathcal{P}(\R^d)$ which are smooth when integrated against smooth functions of the variable $v$. If $\varphi$ is a smooth and bounded function on $\R^d$, we define for all $t$ and $x$
\[
\cg f,\varphi\cd(t,x) := \int \varphi(v) f(t,x,\D v).
\]
The function $\cg f, \varphi \cd$ is called the \emph{macroscopic observable} corresponding to $\varphi$. The class of solutions to \eqref{VP} that we will consider is defined as follows.
\begin{Def}[Weak in $v$ and strong in $x$ solutions]
\label{def:ws_sol}
We will say that $f: [0,T] \times \T^d \to \mathcal{P}(\R^d)$ is a weak in $v$ and strong in $x$ solution to \eqref{VP} if it satisfies in the classical sense for all test function $\varphi$ the system
\begin{equation}
\label{VPweak}
\left\{ \begin{gathered}
\partial_t \cg f, \varphi \cd (t,x) + \Div \cg f, v\varphi \cd (t,x) + \nabla_x U(t,x) \cdot \cg f, \nabla \varphi \cd (t,x) = 0,\\
- \Delta_x U(t,x) = \cg f, 1 \cd (t,x) - 1,\\
f(0,x, \D v) =f_0(x, \D v).
\end{gathered} \right.
\end{equation} 
Equations \eqref{KEu'} and \eqref{VB} have straightforward similar formulations.
\end{Def}

This is motivated by the following fact. If $\mu$ is any probability measure, then it is a weak solution to \eqref{VP} (resp. \eqref{KEu} or \eqref{VB}). Moreover, an integration by parts leads to
 \begin{equation}
 \label{integrationbyparts}
 \int_{\R^d} \frac{n \cdot \nabla_v \mu(v)}{n \cdot (v - \omega)} \D v = |n|^2 \int_{\R^d} \frac{\D \mu(v) }{\{n\cdot(v - \omega)\}^2},
 \end{equation}
which only involves $\mu$ and not its derivatives. (We make no difference between the density $\mu$ and the measure it induces $\D \mu(v) = \mu(v) \D v$.) Therefore, the Penrose instability condition  of Definition \ref{def:Penrose_VP} (resp. \ref{def:Penrose_KEu} or \eqref{def:Penrose_VB}) makes sense for any probability measure $\mu$, and it is a natural question to know whether the stability can be studied around such profiles. 

To give examples of unstable profiles in this setting, we show in Appendix \ref{penrosediracs} that a superposition of a finite number of distinct Dirac masses is always unstable for the three physical models we have presented. This is coherent with the classical setting where profiles with one bump are stable and profiles with several sufficiently large and sufficiently distant bumps are unstable.

The natural question that is asked is the following: do there exist unstable weak solutions to \eqref{penroseVP}, \eqref{penroseKEu} and \eqref{penroseVB} in the neighbourhood of any probability measure $\mu$ that satisfies the corresponding Penrose condition. In the present paper, we answer affirmatively to this question. Let us state the results precisely.

 \subsection{Our new results}
 In the measure-valued setting, we are only able to evaluate the size of the solutions when integrated against smooth functions of $v$. So we will state the results in terms of macroscopic observables. These results might be understood as follows: whatever the number of macroscopic observables we control at the initial time in very strong norms, one specific macroscopic quantity  will be likely to grow along the flow of the equation even in weak norms. This macroscopic quantity will be the electric potential in the case of the Vlasov-Poisson equation, the pressure in the case of the kinetic Euler equation and the density in the case of the Vlasov-Benney equation. 
 \subsubsection*{Almost Lyapounov instability for Vlasov-Poisson} In this case, the result we show can be stated in the following way. 
 \begin{Thm}
\label{kineticstatementVP} 
Take $\mu$ an unstable profile, $N \in \N^*$, $\varphi_1, \dots, \varphi_N \in C_c^{\infty}(\R^d)$, $s \in \N$ and $\alpha \in (0,1]$. Then there exists, $(T_k)\in (\R_+^*)^{\N}$ and $(f_0^k)$ a family of measure-valued initial data such that:
\begin{itemize}
\item for all $k$, there is a weak in $v$ and strong in $x$ solution $f^k$ to \eqref{VP} starting from $f^k_0$ up to time $T_k$,
\item if we denote by $U_k$ the corresponding electric potential, we have:
\[
\frac{\|U_k \|_{L^1([0,T_k) \times \T^d)}}{\sum_{i=1}^N \| \cg f^k_0 , \varphi_i \cd - \cg \mu, \varphi_i\cd \|_{W^{s, \infty}(\T^d)}^{\alpha}}\underset{k \to + \infty}{\longrightarrow} + \infty.
\]
\end{itemize}
Moreover
\begin{equation*}
T_k \underset{k \to + \infty}{\sim}  |\log \eps_k | \quad \mbox{with} \quad \eps_k := \| U_k|_{t=0} \|_{L^1(\T^d)} .
\end{equation*} 
 \end{Thm}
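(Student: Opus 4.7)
The strategy is to remove all $v$-regularity issues by a multiphasic reformulation. By the integration-by-parts identity \eqref{integrationbyparts}, the Penrose condition \eqref{penroseVP} rewrites as $\int \D\mu(v)/\{n\cdot(v-\omega)\}^2 = 1$, a quantity that depends on $\mu$ only as a measure and is continuous under reasonable weak approximation. I first approximate $\mu$ by finite Dirac superpositions
\[
\mu^\ell = \sum_{j=1}^{J_\ell} m_j^\ell \, \delta_{v_j^\ell},
\]
obtained by cell discretisation, chosen so that $|\cg \mu^\ell - \mu, \varphi \cd| \leq h_\ell \to 0$ for each fixed $\varphi \in C_c^\infty(\R^d)$ and so that $\mu^\ell$ itself is Penrose unstable with growth rate $\gamma_\ell = \Im(n^\ell \cdot \omega^\ell) > 0$ bounded below in $\ell$; this is ensured by continuity of the Penrose integral and by Appendix \ref{penrosediracs}. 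It then suffices to build unstable solutions near each $\mu^\ell$ and to extract diagonally.

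Around a Dirac superposition, the weak in $v$/strong in $x$ dynamics of Definition \ref{def:ws_sol} preserves the ansatz $f(t,x,\D v) = \sum_j \rho_j(t,x) \delta_{u_j(t,x)}(\D v)$ if and only if $(\rho_j, u_j, U)$ solves the multiphase Euler--Poisson system
\[
\partial_t \rho_j + \Div(\rho_j u_j) = 0, \quad \partial_t u_j + (u_j \cdot \nabla_x) u_j = -\nabla_x U, \quad -\Delta_x U = \sum_j \rho_j - 1,
\]
whose trivial stationary state $(\rho_j, u_j) \equiv (m_j^\ell, v_j^\ell)$ corresponds to $\mu^\ell$. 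Linearising and inserting the Fourier ansatz $e^{i n \cdot x + \lambda t}$, one eliminates the velocity and density perturbations and obtains the dispersion relation $\sum_j m_j^\ell / \{n \cdot (v_j^\ell - \omega)\}^2 = 1$ with $\lambda = -i n \cdot \omega$, which is exactly the Penrose condition for $\mu^\ell$. Thus the unstable Penrose pair $(n^\ell, \omega^\ell)$ yields an exponential growing mode $\Phi^\ell(t,x) = e^{\gamma_\ell t} \psi^\ell(x)$ of the linearised multiphase system.

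Since the multiphase system has no $v$-variable, it is a classical first-order hyperbolic/elliptic system on $\T^d$; local well-posedness follows for instance by passing to the Lagrangian flow maps $X_j(t,x)$, which turn it into an ODE system coupled through Poisson. Following the nonlinear instability scheme of Guo--Strauss, Grenier and Han-Kwan--Nguyen, I would seek a true solution of the form $(m_j^\ell, v_j^\ell) + \delta \Phi^\ell + R_\delta$ starting from $(m_j^\ell, v_j^\ell) + \delta \Phi^\ell(0, \cdot)$ and estimate the remainder in $H^s(\T^d)$ for large $s$ by a Duhamel/Gronwall bootstrap, yielding $\|R_\delta(t)\|_{H^s} \lesssim \delta^2 e^{2 \gamma_\ell t}$ as long as this stays small, that is on $[0, T(\delta, \ell)]$ with $T(\delta, \ell) \sim \gamma_\ell^{-1} |\log \delta|$. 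At that final time the linear mode is of size $O(1)$ while the remainder is $o(1)$, hence $\|U(T, \cdot)\|_{L^1(\T^d)} \gtrsim 1$ and $\|U\|_{L^1([0,T] \times \T^d)} \gtrsim \gamma_\ell^{-1}$; the Poisson equation at $t = 0$ gives $\eps := \|U|_{t=0}\|_{L^1} \sim \delta$ (because $\mu^\ell$ is a probability measure), and $\cg f_0 - \mu, \varphi_i \cd$ is bounded in $W^{s, \infty}(\T^d)$ by $h_\ell + C \delta = O(\delta)$ once $h_\ell \ll \delta$. A diagonal extraction $(\ell_k, \delta_k)$ then yields the announced divergence of the ratio for any $\alpha \in (0, 1]$, together with $T_k \sim |\log \eps_k|$. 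The main technical obstacle I foresee is the long-time bootstrap over $[0, T(\delta, \ell)]$: pressureless multiphase Euler can develop caustics in finite time, so one must verify that the exponential amplification wins before the classical life span of any phase's Lagrangian flow is exhausted, with constants independent of $\ell$; working in $x$-analytic function spaces in the spirit of Grenier's instability framework, together with the finiteness of the phase count, should make this tractable.
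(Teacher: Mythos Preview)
Your proposal differs from the paper in one essential organisational choice and has one genuine technical gap.

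\textbf{The paper does not discretise $\mu$.} Its central idea is to label the phases by the velocity space itself, taking $\X=\R^d$, $\nu=\mu$, and the stationary solution $\rho^w\equiv 1$, $v^w\equiv w$ (see \eqref{homogeneoussolutionmultiphasic} and Figure~\ref{figmultiphasic}). The multiphasic system \eqref{multiphasic} is then studied directly for this \emph{continuum} of phases indexed by $\mu$, and the Penrose condition \eqref{penrosecondition} for the linearised multiphasic operator coincides exactly with the measure-valued Penrose condition for $\mu$. This eliminates your approximation step and, with it, every uniformity-in-$\ell$ question (number of phases, proximity of phase velocities, dependence of constants on $J_\ell$, stability of the Penrose root under discretisation). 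Your diagonal extraction is simply not needed.

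\textbf{The $H^s$ bootstrap does not close as written.} In the multiphasic formulation the linearised density equation contains the term $m_j\,\Div u_j$, which loses one derivative; equivalently, at each Fourier mode the free-transport block
\[
-i\begin{pmatrix} n\!\cdot\! v_j & n^{T}\\ 0 & n\!\cdot\! v_j\, I_d\end{pmatrix}
\]
has a Jordan block producing a factor $(1+t|n|)$ in $e^{tA_n}$ (see \eqref{estimexptA}). Hence $\sup_n\|e^{tA_n}\|=\infty$ and the Duhamel estimate $\|R_\delta(t)\|_{H^s}\lesssim \delta^2 e^{2\gamma_\ell t}$ cannot be obtained from an $H^s\!\to\! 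H^s$ semigroup bound. A mixed norm $H^{s-1}_\rho\times H^s_u$ restores boundedness at the linear level, but then the nonlinear term $\Div(r u)$ lands only in $H^{s-2}$, so the quadratic bootstrap still fails. This is precisely why the paper works in the scale $X_\delta$ of analytic norms, where the loss is absorbed as loss of analyticity radius (Theorem~\ref{sharpestimate}, Lemma~\ref{fourierlemma}). Your last paragraph correctly identifies analytic regularity as the fix; once you adopt it, you are doing the paper's argument, and the finite-phase reduction brings no advantage.

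A symptom of this gap is your claim that $\|U(T_k,\cdot)\|_{L^1}\gtrsim 1$: this would be genuine Lyapounov instability, which the paper explicitly cannot reach (Remark~\ref{remalmostLyapounov}). With the analytic method the existence time is $T=\delta_0/\Gamma$ with $\Gamma>\gamma_0>\gamma$, so the linear mode at time $T$ has size $c_k^{1-\gamma/\Gamma}\to 0$; one only obtains the \emph{almost} Lyapounov statement of Theorem~\ref{almostLyapounovThm}, from which Theorem~\ref{kineticstatementVP} follows via Corollary~\ref{almostLyapounovkinetic}.
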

Remark that there is no contribution of the stationary solution in the numerator because the electric potential of the stationary solution is $0$.
 
We could not prove with our method a Lyapounov instability result: in our proof, we build solutions that actually satisfy
 \[
 \|U_k \|_{L^1([0,T_k) \times \T^d)} \underset{k \to + \infty}{\longrightarrow} 0,
 \]
 whereas Lyapounov instability would correspond to the following property:
 \[
 \sum_{i=1}^N \| \cg f^k_0 , \varphi_i \cd - \cg \mu, \varphi_i\cd \|_{W^{s, \infty}(\T^d)} \underset{k \to + \infty}{\longrightarrow} 0,
 \]
 but:
\[
 \liminf_{k \to + \infty} \|U_k \|_{L^1([0,T_k) \times \T^d)} >0.
\]
This point will be developed in Remark \ref{remalmostLyapounov}.

In conclusion, our method makes it possible to deal with measure-valued solutions. It also allows to drop the so-called $\delta$ and $\delta'$-conditions in \cite{han2015stability} that we already talked about. But on the other hand, the instability result is a bit weaker than the one of Han-Kwan--Hauray in \cite{han2015stability} and Han-Kwan--Nguyen in \cite{han2016nonlinear}.
 
 \subsubsection*{Ill-posedness for kinetic Euler and Vlasov-Benney}
 The statement in these cases is similar to the previous one, but we can take a sequence $(T_k)$ tending to zero: the instabilities can develop arbitrarily fast.
  \begin{Thm}
\label{kineticstatementKEuVB} 
Take $\mu$ an unstable profile, $N \in \N^*$, $\varphi_1, \dots, \varphi_N \in C_c^{\infty}(\R^d)$, $s \in \N$ and $\alpha \in (0,1]$. Then there exists, $(T_k)\in (\R_+^*)^{\N}$ \textbf{tending to zero} and $(f_0^k)$ a family of measure-valued initial data such that:
\begin{itemize}
\item for all $k$, there is a weak in $v$ and strong in $x$ solution $f^k$ to \eqref{KEu} starting from $f^k_0$ up to time $T_k$,
\item if we denote by $p_k$ the corresponding pressure, we have:
\[
\frac{\|p_k \|_{L^1([0,T_k) \times \T^d)}}{\sum_{i=1}^N \| \cg f^k_0 , \varphi_i \cd - \cg \mu, \varphi_i\cd \|_{W^{s, \infty}(\T^d)}^{\alpha}}\underset{k \to + \infty}{\longrightarrow} + \infty.
\]
\end{itemize}
Moreover
\begin{equation*}
T_k \underset{k \to + \infty}{\sim} \left( \frac{|\log \eps_k |}{|n_k|} \right),
\end{equation*}
where $\eps_k := \| p_k|_{t=0} \|_{L^1}$ and $n_k$ is the spatial frequency of the nearest exponential growing mode.

The same result holds for \eqref{VB} instead of \eqref{KEu}, replacing the pressure by 
\[
\rho_k - 1,
\]
$\rho_k$ being the density of $f_k$.
 \end{Thm}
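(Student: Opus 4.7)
The plan is to reduce the problem to a multiphasic hydrodynamic system by approximating $\mu$ by a finite sum of Dirac masses, and to exploit the scale invariance of \eqref{penroseKEu} and \eqref{penroseVB} to produce instabilities at vanishing time scales. Given the unstable measure $\mu$, Definition~\ref{def:Penrose_KEu} (resp.~\ref{def:Penrose_VB}) furnishes $(n_*,\omega_*)\in\Z^d\times\C^d$ with $\Im(n_*\cdot\omega_*)>0$; since the pair $(jn_*,\omega_*)$ is again a root of \eqref{penroseKEu} (resp.~\eqref{penroseVB}) for every $j\in\N^*$, with linear growth rate $j\,\Im(n_*\cdot\omega_*)$, one can target a family $(f^k)$ whose relevant spatial frequencies $|n_k|$ tend to infinity. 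This is exactly what yields the shrinking time scale $T_k\sim|\log\eps_k|/|n_k|$.

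First I would approximate $\mu$ by a Dirac superposition $\mu_{M_k}=\sum_{j=1}^{M_k}c_j^k\,\delta_{v_j^k}$ that converges to $\mu$ tested against smooth functions and for which the integrated form \eqref{integrationbyparts} of the Penrose condition still admits a root $(n_k,\omega_k)$ with $\omega_k\to\omega_*$ and $|n_k|\to\infty$: this is essentially a quadrature argument applied to \eqref{penroseKEu} and \eqref{penroseVB}, combined with a continuity argument on the zeros of the dispersion relation. Because $\mu_{M_k}$ is supported on $M_k$ points, any weak-in-$v$ solution issued from a perturbation preserving the monokinetic-by-phase structure is of the form $f(t,x,\cdot)=\sum_{j=1}^{M_k}\rho_j(t,x)\,\delta_{u_j(t,x)}$, and \eqref{KEu} (resp.~\eqref{VB}) closes into a multiphasic hyperbolic system in $(\rho_j,u_j)$ with the pressure (resp.~density) as coupling Lagrange multiplier. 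This is exactly the object the abstract framework of the paper is designed to handle.

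The construction of $f^k$ then follows a classical analytic Cauchy-Kowalevski scheme \`a la Grenier / M\'etivier: take as initial datum $\mu_{M_k}$ perturbed by $\eps_k$ times the unstable eigenmode associated with $(n_k,\omega_k)$ of the linearized multiphasic system, and propagate it in a scale of analytic norms. One then obtains a local-in-time estimate of the form
\begin{equation*}
\|f^k(t) - \mu_{M_k}\|_{\mathrm{an}}\leq C\,\eps_k\,e^{\gamma_k t},\qquad \gamma_k \sim |n_k|\,\Im(n_*\cdot\omega_*)/|n_*|,
\end{equation*}
valid up to the time $T_k$ at which the right-hand side reaches a fixed small threshold $\delta_0$; this gives precisely $T_k\sim|\log\eps_k|/|n_k|\to 0$. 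At that time the pressure $p_k$ (resp.~$\rho_k-1$) is of order $\delta_0$ in $L^1([0,T_k)\times\T^d)$, while each difference $\cg f_0^k,\varphi_i\cd - \cg\mu,\varphi_i\cd$ is at most of order $\eps_k|n_k|^s$ in $W^{s,\infty}(\T^d)$ up to the vanishing contribution of $\mu_{M_k}-\mu$; calibrating $M_k$, then $|n_k|$, then $\eps_k$ makes the ratio in the statement diverge for any fixed $\alpha\in(0,1]$.

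The main obstacle lies in the discretization and analytic estimates. On the one hand, one must build approximations $\mu_{M_k}$ that carry unstable Penrose modes of arbitrarily large frequency whose growth rate per unit frequency stays bounded below, which is the place where the regularity of $\mu$ is traded against multiphasic complexity. On the other hand, the analytic Cauchy-Kowalevski estimate for the multiphasic system must be sufficiently uniform in $M_k$ and in $|n_k|$ for the linear exponential growth to prevail over the nonlinear correction on the whole interval $[0,T_k)$. This uniform analytic estimate is the technical heart of the argument, forcing a precise calibration between $\eps_k$, $|n_k|$ and $M_k$; it is what genuinely distinguishes the measure-valued setting from the smooth-profile ill-posedness proof of Han-Kwan--Nguyen \cite{han16}.
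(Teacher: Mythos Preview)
Your approach introduces an unnecessary layer: the discretization of $\mu$ by Dirac sums $\mu_{M_k}$ is not needed, and it is precisely this step that creates the artificial difficulty you identify as ``the technical heart'' (uniform-in-$M_k$ analytic estimates). The paper's multiphasic formulation is indexed by the measure $\mu$ \emph{itself}: phases are labeled by $w\in\R^d$, with $\rho^w\equiv 1$, $v^w\equiv w$ as the stationary solution and $\mu$ as the weighting measure on the label space. The linearized spectral analysis, the sharp semigroup bounds (Theorem~\ref{sharpestimate}), and the nonlinear existence (Theorem~\ref{existenceofsolutions}) all work directly for arbitrary $\mu$ satisfying the moment assumptions; the constants depend only on $M$, $r_0$, and $\Gamma$, never on any phase count. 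So one takes the exponential growing mode of the linearized multiphasic system around $\mu$ itself, scaled by $c_k$, and runs the Cauchy--Kovalevskaia argument once---no approximation, no uniformity issue. Your calibration ``$M_k$, then $|n_k|$, then $\eps_k$'' collapses to ``$|n_k|$, then $c_k$''.

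There is also a genuine gap in the kinetic Euler case that your proposal does not address. The equation \eqref{KEu} defines the pressure only implicitly as a Lagrange multiplier for the constraint $\int f\,\D v=1$, so the abstract multiphasic framework does not apply to it directly; one must instead solve the closed system \eqref{KEu'} (where $p$ is given by an explicit elliptic formula) and then prove that the resulting solution actually satisfies the incompressibility constraint. But the initial data given by the exponential growing modes do \emph{not} satisfy the second compatibility condition in \eqref{initialdataconstraint}: $\Div_x\int v f_0\,\D v$ fails to vanish at order $c_k^2$. The paper fixes this (Subsection~\ref{sectionkineticEuler}) by adding an explicit quadratic correction $V_k$ to the initial velocity, then checking that this correction is negligible in both the $W^{s,\infty}$ estimate of the initial data and the $L^1$ estimate of the solution. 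Without this step, the solutions you construct solve \eqref{KEu'} but not \eqref{KEu}, and the theorem as stated is not proved.
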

 
 In \cite{han16}, Han-Kwan and Nguyen show a similar result with the additional assumption that $\mu$ is analytic. So in that case, our result is a strict generalization. Once again, we can drop the $\delta$ or $\delta'$-condition.
 
 \section{Ideas of proof}
 Before going into the details of the proof, let us present how to build weak in $v$ and strong in $x$ solutions. The idea is to look for a particular class of solutions: the ones that admit a multiphasic decomposition. The weak in $v$ and strong in $x$ solutions that we will build will be induced by strong solutions to a different system. Let us explain this idea.  
 
 \subsection{A multiphasic representation}
 We will present in this subsection how to build weak in $v$ and strong in $x$ solutions to the Vlasov-Poisson equation \eqref{VP}. The other models can be treated the same way. Let us rewrite \eqref{VP} here for clarity: 
\begin{equation*}
\left\{ \begin{gathered}
\partial_t f(t,x,v) + v\cdot \nabla_x f(t,x,v) - \nabla_x U(t,x) \cdot \nabla_v f(t,x,v) = 0,\\
- \Delta_x U(t,x) = \int f(t,x,v) \D v - 1,\\
f(0,x,v)= f_0(x,v).
\end{gathered} \right.
\end{equation*}  
 Assume that the initial data can be decomposed into a superposition of smooth graphs (with densities): there exists $\X$ a polish space, $\nu$ a Borel probability measure on this set, $\rhorho_0 = (\rho^{\alpha}_0)_{\alpha \in \X}$ a family of smooth functions on $\T^d$ (the densities) and $\vv_0 = (v^{\alpha}_0)_{\alpha \in \X}$ a family of smooth vector fields on $\T^d$ (which provide the graphs), such that for all smooth and bounded function $\varphi$ and for all position $x$,
\[
\int \varphi(v) f_0(x, \D v) = \int \varphi(v^{\alpha}_0(x)) \rho^{\alpha}_0(x) \D \nu(\alpha).
\]
Also suppose that we are able to solve (say classically) the following system:
\begin{equation}
\label{multiphasic}
\left\{ \begin{gathered}
\forall \alpha \in \X, \quad \partial_t \rho^{\alpha}(t,x) + \Div ( \rho^{\alpha}(t,x) v^{\alpha}(t,x) ) = 0, \\
\forall \alpha \in \X, \quad \partial_t v^{\alpha}(t,x) + (v^{\alpha}(t,x) \cdot \nabla)v^{\alpha}(t,x) = - \nabla U(t,x),\\
- \Delta U(t,x) = \int \rho^{\alpha}(t,x) \D \nu(\alpha) - 1,\\
\forall \alpha \in \X, \quad \rho^{\alpha}|_{t=0} = \rho^{\alpha}_0 \mbox{ and } v^{\alpha}|_{t=0} = v^{\alpha}_0.
\end{gathered} \right.
\end{equation}
Then, at time $t$ and position $x$, we can define the measure $f(t,x,\bullet)$ though the macroscopic observable: for all $\varphi$ sufficiently smooth,
\begin{equation}
\label{multiphasictokinetic}
\int \varphi(v) f(t,x, \D v) = \cg f, \varphi \cd (t,x)  := \int \varphi(v^{\alpha}(t,x)) \rho^{\alpha}(t,x) \D \nu (\alpha).
\end{equation}
Straightforward computations show that this density is a weak in $v$ and strong in $x$ solution to \eqref{VP}, as defined in Definition \ref{def:ws_sol}.

Roughly speaking, the multiphasic representation corresponds to the case when the whole population of particles can be divided into distinguishable phases, each of which can be described by its pointwise density and velocity. According to the first equation in \eqref{multiphasic}, each density is transported by the corresponding velocity, according to the second one, each phase is accelerated by the same potential, and according to the third one, the potential is calculated by taking into account all the phases.

This formulation was already used by Grenier in \cite{grenier1996oscillations} to prove a small time existence result and to analyse precisely the quasineutral limit. In another direction, Brenier built in \cite{bre97} and \cite{bre99} some low regularity solutions to the kinetic Euler equations in this formulation as minimizers of the mean kinetic action
\[
\frac{1}{2}\int \hspace{-5pt} \int_0^T \hspace{-7pt} \int |v^{\alpha}(t,x)|^2 \rho^{\alpha}(t,x) \D x \D t \D \nu(\alpha)
\]
with prescribed $(\rho^{\alpha}|_{t=0})_{\alpha \in \X}$ and $(\rho^{\alpha}|_{t=T})_{\alpha \in \X}$. Let us point out that the ill-posedness for this kind of multifluid system answer questions that were left open in \cite[Introduction]{bre97} and \cite[Remark 2.2]{han16}.

Here, the specificity is the fact that we label the phases by the velocity space $\R^d$ in the following way. To any probability measure $\mu$ on $\R^d$ corresponds a stationary homogeneous solution to \eqref{VP}, \eqref{KEu}, \eqref{VB} (in their weak formulations of type \eqref{VPweak}) given by $f (x,\D v) = \D \mu(v)$. It has a smooth multiphasic decomposition indexed by $\mu$ itself: defining for all $w$, $\rho^w \equiv 1$ and $v^w \equiv w$, then for all admissible test function $\varphi$ and all position $x$,
\begin{equation}
\label{homogeneoussolutionmultiphasic}
\int \varphi(v) f(x,\D v) = \int \varphi(v^w(x)) \rho^w(x) \D \mu(w) = \int \varphi(v) \D \mu(v).
\end{equation}
Remark that these $\rhorho$ and $\vv$ are stationary solutions to \eqref{multiphasic} with $U\equiv0$. We can then ask the question of linear stability in this multiphasic formulation. Doing so, we will recover in Subsection \ref{spectralstudy} the Penrose condition. We provide an illustration on the notion of having a smooth multiphasic decomposition indexed by $\mu$ at Figure \ref{figmultiphasic}. In fact, in this paper, we will mainly work with multiphasic formulations. 
\begin{figure}
\centering
\begin{tikzpicture}[scale=0.8]
\draw[very thick, blue] (0,1) -- (4,1);
\draw[very thick, red] (0,2.5) -- (4,2.5);
\draw[very thick, violet] (0,3) -- (4,3);
\draw (0,0) rectangle (4,4);
\draw (-0.3,2) node{$v$};
\draw (2,-0.3) node{$x$};
\end{tikzpicture}
\hspace{0.3cm}
\begin{tikzpicture}[scale=0.8]
\draw[very thick, blue] (0,1) .. controls (1,2) and (3,0) .. (4,1);
\draw[very thick, red] (0,2.5) .. controls (2.5,2) and (3,4) .. (4,2.5);
\draw[very thick, violet] (0,3) .. controls (1,2) and (3,4) .. (4,3);
\draw (0,0) rectangle (4,4);
\draw (-0.3,2) node{$v$};
\draw (2,-0.3) node{$x$};
\end{tikzpicture}
\hspace{0.3cm}
\begin{tikzpicture}[scale=0.8]
\draw[very thick, blue] (0,1) to[out=0, in=180] (1,0.7);
\draw[very thick, blue] (1,0.7) to[out=0, in=180] (2,1.5);
\draw[very thick, blue] (2,1.5) to[out=0, in=180] (3,1.2);
\draw[very thick, blue] (1,0.7) to[out=0, in=180] (2,0.2);
\draw[very thick, blue] (2,0.2) to[out=0, in=180] (3,1.2);
\draw[very thick, blue] (3,1.2) to[out=0, in=180] (4,1);
\draw[very thick, red] (0,2.5) to[out=0, in=180] (1.7,3);
\draw[very thick, red] (1.7,3) to[out=0, in=170] (1.3,2);
\draw[very thick, red] (1.3,2) to[out=-10, in=180] (4,2.5);
\draw[very thick, violet] (0,3) .. controls (1,4) .. (4,3);
\draw (0,0) rectangle (4,4);
\draw (-0.3,2) node{$v$};
\draw (2,-0.3) node{$x$};
\end{tikzpicture}
\caption{\label{figmultiphasic} On the left, a stationary homogeneous density. Here $\mu$ is a superposition of three Diracs. In the middle, another density that has a smooth multiphasic structure indexed by $\mu$. The velocities depend on the position and the density on each graph may not be uniform anymore. On the right, the density does not have a smooth multiphasic structure indexed by $\mu$. The velocity of the two lower "phases" are no longer graphs. To get a multiphasic decomposition, we should add some labels.}
\end{figure}
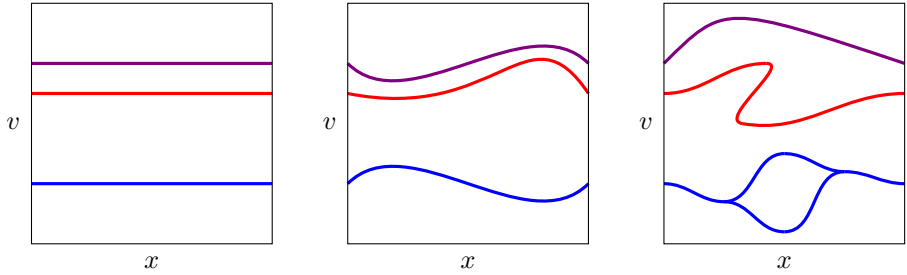

We are now ready to describe briefly the structure of the proof.

 \subsection{Sketch of the proof}
\subsubsection*{Analytic regularity with respect to the position} The proof consists in studying the linearized multiphasic system to get an estimate on the corresponding semigroup, and then to use this estimate to get a nonlinear solution through a fixed point argument. As in the works \cite{grenier1996oscillations, han2015stability, han16}, we work in an analytic framework. The densities and velocity fields in the multiphasic formulation will be analytic functions of $x$. This is the relevant level of regularity to handle the fact that in the kinetic Euler equation and in the Vlasov-Benney equation, the force field ($- \nabla p$ and $-\nabla \rho$ respectively) are one derivative less regular than the density. So for instance, there is no hope \emph{a priori} to perform a fixed point proof of existence in any Sobolev space (besides, our ill-posedness result makes the feasibility of such proof very unlikely). In our work, this lack of regularity will appear in the fact that the semi-group of the linearized operator will be continuous only in analytic functional spaces.
 
\subsubsection*{Outline of the paper} 
Let us present the content of each section of the paper.
\paragraph{\underline{Section \ref{model}}.} We introduce the abstract multiphasic model we will work with, and the assumptions we make to perform the analysis. The three examples presented in Subsection \ref{physicalmodels} in their multiphasic formulations are particular cases of this model. The homogeneous solutions presented in \eqref{homogeneoussolutionmultiphasic} are still stationary solutions in this framework.

\paragraph{\underline{Section \ref{sectionlinear}}.} We study the linearization of the abstract model around these homogeneous stationary solutions. This section is divided in two parts: in Subsection \ref{spectralstudy}, we compute the unstable eigenvectors and eigenvalues of the linearized system, and in Subsection \ref{sharpsemigroupbound}, we derive some sharp estimates for the corresponding semigroup in analytic regularity following \cite{han16}. These estimates are crucial to get sufficiently large times of existence for the instabilities to develop (see the beginning of Subsection \ref{sharpsemigroupbound} for more detail).

\paragraph{\underline{Section \ref{nonlinearinstability}}.} We show that their exist analytic solutions to the abstract model of the form
\[
\mbox{stationary solution} + \eta f + \mbox{remainder}
\]
where $\eta$ is a small parameter, $f$ is a solution to the linearized system (typically an exponential growing mode of spatial frequency $n$) and the remainder is small with respect to $\eta$ and cancels at $t=0$. We also bound from below the time of existence of such solutions with respect to $\eta$ and $n$ using the estimate derived in the previous section. This is done at Theorem \ref{existenceofsolutions} which is the main result of this paper. The strategy is the same as in \cite{grenier1996oscillations} and \cite{han16}: we decompose the operator as a linear term and an at least quadratic term, and we consider the latter as a source term in a Duhamel formulation. After a fine analysis of the properties of the analytic norms we use (Subsection \ref{propertiesanalyticnorms}), and of the size of each term in the Duhamel formulation (Subsection \ref{estimatesduhamelformulation}), we can perform at Subsection \ref{cauchykovalevskaiatheorem} a fixed point argument as in Caflish' proof of the Cauchy-Kovalevskaia theorem (see \cite{caflisch1990simplified}).

\paragraph{\underline{Section \ref{consequences}}.} We show how to deduce from these existence results Theorem \ref{kineticstatementVP} and Theorem \ref{kineticstatementKEuVB}. Theorem \ref{almostLyapounovThm} asserts that the Penrose instability condition always implies almost Lyapounov instability in the abstract multiphasic model. The only thing we need to do is to use the form of the eigenvalues and the estimates obtained in Theorem \ref{existenceofsolutions} to evaluate precisely the size of the initial data in Sobolev types norms and of the solutions in Lebesgue type norms. Corollary \ref{almostLyapounovkinetic} is a kinetic version of Theorem \ref{almostLyapounovThm} and directly implies Theorem \ref{kineticstatementVP}.

On the other hand, ill-posedness around Penrose unstable profiles only holds in the abstract multiphasic model when a further assumption is made on the spectrum of the linearized operator. This is the content of Theorem \ref{illposednesstheorem}, and of Corollary \ref{illposednesskinetic}, its kinetic counterpart. This assumption is true in the kinetic Euler equation and in the Vlasov-Benney equation thanks to their scaling properties already discussed in Subsection \ref{presentationpenrosecondition}. Apart from this new ingredient, the proof is very similar to the one of Theorem \ref{kineticstatementVP}. However, if Corollary \ref{illposednesskinetic} directly implies Theorem \ref{kineticstatementKEuVB} in the Vlasov-Benney case, we need to work a little bit more to adapt it to the case of the kinetic Euler equation. The reason is the fact that the initial data of the exponential growing modes we build in the abstract setting do not satisfy the incompressibility constraint. In Subsection \ref{sectionkineticEuler}, we present how to fix this problem, and thus how to prove Theorem \ref{kineticstatementKEuVB} in the case of the kinetic Euler equation.

\paragraph{\underline{Appendices}.} In Appendix \ref{penrosediracs}, we show that any superposition of at least two Dirac masses is unstable for the three physical models. In Appendix \ref{appendixproof}, we give the proofs of the properties of the analytic norms stated in Subsection \ref{propertiesanalyticnorms}. 
\section{Presentation of the abstract model}
\label{model}
Let us describe the model we will study throughout the paper.
\subsection{The abstract model}
 First, we model the evolution of several phases indexed by a probability measure $\mu$ on $\R^d$ and described by their densities $(\rho^w)_{w \in \R^d}$ and velocity fields $(v^w)_{w \in \R^d}$ which are functions of time $t \in \R_+$ and position $x \in \T^d$. The torus is normalized, so that the total mass of its Lebesgue measure is supposed to be equal to one. The notation $\rhorho(t)$ and $\vv(t)$ will stand for the whole families $(\rho^w(t, \bullet))_{w \in \R^d}$ and $(v^w(t, \bullet))_{w \in \R^d}$. These phases follow the Newton dynamics in a potential $U$: 
\begin{equation}
\label{abstracteq}
\left\{ \begin{gathered}
\forall w \in \R^d, \quad \partial_t \rho^{w}(t,x) + \Div ( \rho^{w}(t,x) v^{w}(t,x) ) = 0, \\
\forall w \in \R^d, \quad \partial_t v^{w}(t,x) + (v^{w}(t,x) \cdot \nabla)v^{w}(t,x) = - \nabla U[\rhorho(t), \vv(t)](x),\\
\forall w \in \R^d, \quad \rho^{w}|_{t=0} = \rho^{w}_0 \mbox{ and } v^{w}|_{t=0} = v^{w}_0.
\end{gathered} \right.
\end{equation}
We need now to describe how the phases generate the potential. We suppose it is in the following form:
\begin{equation}
\label{abstractforce}
U[\rhorho, \vv](x) := A\left[  \int \Phi(v^w) \rho^w \D \mu(w) \right](x).
\end{equation}
where: $\Phi: \R^d \to E$ is a smooth function, $E$ is a normed $\R$-vector space with finite dimension and $A$ is a homogeneous Fourier multiplier of symbol $P: \Z^d \to \mathcal{L}(E^{\mathbb{C}}; \C) = \mathcal{L}(E; \R)^{\mathbb{C}}$. (The notations $E^{\mathbb{C}}$ and $ \mathcal{L}(E; \R)^{\mathbb{C}}$ stand for the complexifications of $E$ and $\mathcal{L}(E; \R)$ respectively.)
\begin{Rem}[Stationary solution]
 Defining for all $w$ $\rho^w \equiv 1$ and $v^w \equiv w$, the potential is well defined thanks to \eqref{intPhi}, its gradient vanishes, and we get a stationary solution (corresponding to the stationary homogeneous solution in Vlasov-Poisson).
\end{Rem} 
\begin{Rem}
It would be natural to solve the two first lines of \eqref{abstracteq} only for $\mu$-almost all $w$. In addition, it could seem artificial to prescribe initial conditions for all $w$ and not only for $\mu$-almost all $w$ because it would mean describing the distribution of the particles belonging to a phase that does not contain any particle. However, in this paper, we build solutions starting from very specific initial conditions (the eigenvectors of the linearized operators around homogeneous solutions) that have a meaning for all $w$. So we will indeed solve \eqref{abstracteq} for all $w$.
\end{Rem}

\subsection{Gradient structure} We will solve the system \eqref{abstracteq}-\eqref{abstractforce} for a particular class of initial data, where the total mass is the same as the one of the stationary solution, and where the velocity is a gradient.

Formally, a solution to \eqref{abstracteq}-\eqref{abstractforce} of the form 
\[
(\rhorho(t), \vv(t)) = (1 + r^w(t), w + u^w(t))_{w \in \R^d}
\]
with
\[
\forall w \in \R^d, \quad \int_{\T^d} r^w(0,x) \D x = 0 \quad \mbox{and} \quad u^w(0) \mbox{ is a gradient}
\]
keeps this structure along the flow: we expect that for all $t$ for which the solution exists, 
\[
\forall w \in \R^d, \quad \int_{\T^d} r^w(t,x) \D x = 0 \quad \mbox{and} \quad u^w(t) \mbox{ is a gradient}.
\]
We will see in the sequel that this is true for our solutions. We give a name to this type of families of functions.
\begin{Def}
\label{defL0}
Let $(\rr, \uu) = (r^w, u^w)_{w \in \R^d}$ a family of pairs of analytic functions. We write $(\rr, \uu) \in \boldsymbol{L}_0$ if
\[
\forall w \in \R^d, \quad \int_{\T^d} r^w(x) \D x = 0 \quad \mbox{and} \quad u^w \mbox{ is a gradient}.
\]
\end{Def}

\subsection{Assumptions}
Let us give a few assumptions to be made to perform the analysis. We will need several quantities depending on $A$, $\Phi$ and $\mu$ to be finite. We will take a large number $M>0$ that bounds all of them.
\begin{Ass}[Assumptions on $\mu$ and $\Phi$] We suppose that $\Phi$ is a power series on $\R^d$, \emph{i.e.} there is $(a_k)_{k \in \N^d} \in E^{\N^d}$ such that for all $w \in \R^d$,
\begin{equation}
\label{defPhi}
\Phi(w) = \sum_{k \in \N^d} w^k a_k
\end{equation}
where if $k = (k_1, \dots, k_d)$ and $w = (w_1, \dots, w_d)$, $w^k$ stands for the real number $w_1^{k_1} \times \dots \times w_d^{k_d}$. We will also use the notation $|k| := k_1 + \dots +k_d$. Moreover, we suppose that there exists $r_0>0$ such that the following quantities are finite and bounded by $M$:
\begin{gather}
\label{intPhi} \int |\Phi(w)| \D \mu(w) \leq M, \\
\label{intdPhi} \int |\D \Phi(w)| \D \mu(w) \leq M,\\
\label{intdPhiseries} \sum_{k \in \N^d, |k|\geq 1} |a_k| |k| \int (|w| + r_0)^{|k|-1} \D \mu(w) \leq M , \\
\label{intd2Phiseries} \sum_{k \in \N^d, |k|\geq 2} |a_k| |k|(|k| - 1) \int (|w| + r_0)^{|k|-2} \D \mu(w) \leq M.
\end{gather}
\end{Ass}
These quantities are linked together: for instance, \eqref{intdPhiseries} clearly implies \eqref{intdPhi}. However we will not develop much these links, especially since in all the physical models presented in the introduction, $\Phi$ is polynomial, and in that case, all these estimates hold with $M$ big enough as soon as
\[
\int |w|^p \D \mu(w) < + \infty
\]
where $p$ is the degree of $\Phi$. We will nevertheless write the proof for analytic $\Phi$ because the estimates are the same as in the polynomial case.
\begin{Ass}[Assumptions on $P$] We suppose that $A$ is real, which means in terms of its symbol $P$:
\begin{equation}
\label{Areal}
\forall n \in \Z^d, \qquad P(-n) = \overline{P(n)},
\end{equation}
where the conjugate is understood via the identification $\mathcal{L}(E^{\mathbb{C}}; \C) = \mathcal{L}(E; \R)^{\mathbb{C}}$. 

We also suppose that $P$ is uniformly bounded:
\begin{equation}
\label{assumptionP}
\sup_{n \in \Z^d} |P(n)| \leq M.
\end{equation}
\end{Ass}
This assumption will be crucial for the semi-group of the linearized operator to be continuous at our level of analytic regularity. It means that the force field should not involve more than one derivative of the macroscopic observable
\[
\int \Phi(v^w) \rho^w \D \mu(w).
\]

\subsection{Examples}
Let us give $E$, $\Phi$ and $P$ in our physical models.
\begin{itemize}
\item \textbf{The Vlasov-Poisson case.} Equation \eqref{VP} has a straightforward multiphasic formulation of the form \eqref{abstracteq}-\eqref{abstractforce}: we take $E = \R$, $\Phi \equiv 1$, $P(0)=0$ and 
\[
\forall n \in \Z^d\backslash\{0\}, \quad P(n) = \frac{1}{ |n|^2}.
\]
\item \textbf{The kinetic Euler case.} Equation \eqref{KEu} does not have \emph{a priori} a multiphasic version of the form \eqref{abstracteq}-\eqref{abstractforce} because the pressure field is not given by a formula as in \eqref{abstractforce}. In fact, we can derive one.  the Cauchy problem \eqref{KEu} makes sense only when $f_0$ satisfies some additional properties. First, of course, it must satisfy the incompressibility constraint. Furthermore, if one integrates formally the first equation with respect to $v$, one gets because of the constraint and the fact that the pressure does not depend on the velocity
\begin{equation}
\label{nulldivergence}
\Div_x \left( \int v f(t,x,v) \D v \right) = 0.
\end{equation}
This means that the macroscopic velocity is divergence-free. This property must hold at time $t=0$. Consequently, $f_0$ must satisfy 
\begin{equation}
\label{initialdataconstraint}
\begin{gathered}
\int f_0(x,v) \D v \equiv 1,\\
\Div_x \left( \int v f_0(x,v) \D v \right) \equiv 0.
\end{gathered}
\end{equation}

Then, multiplying the equation by $v$, integrating it with respect to $v$ and finally taking the divergence leads to
\begin{equation*}
- \Delta_x p(t,x) = \Div_x \bDiv_x \left( \int v \otimes v f(t,x,v) \D v \right).
\end{equation*}
In fact, in what follows, we will be dealing directly with:
\begin{equation}
\label{KEu'}
\left\{ \begin{gathered}
\partial_t f(t,x,v) + v\cdot \nabla_x f(t,x,v) - \nabla_x p(t,x) \cdot \nabla_v f(t,x,v) = 0,\\
- \Delta_x p(t,x) = \Div_x \bDiv_x \left( \int v \otimes v f(t,x,v) \D v \right), \\
f(0,x,v) =f_0(x,v),
\end{gathered}\right.
\end{equation}
and justify in Subsection \eqref{sectionkineticEuler} that when $f_0$ satisfies \eqref{initialdataconstraint}, then our solutions are indeed solutions to \eqref{KEu}. 

Now \eqref{KEu'} has a multiphasic version of the form \eqref{abstracteq}-\eqref{abstractforce}: it suffices to take $E = \mathcal{M}_d(\R)$, $\Phi: v \mapsto v \otimes v$, $P(0)=0$ and
\[
\forall n \in \Z^d\backslash\{0\},\, \forall X \in \mathcal{M}_d(\R), \quad P(n)\cdot X = -\frac{\cg n, X \cdot n \cd}{|n|^2}.
\]
\item \textbf{The Vlasov-Benney case.} In the case of the Vlasov-Benney equation \eqref{VB}, we take $E = \R$, $\Phi \equiv 1$ and for all $n \in \Z^d$,
\[
\forall n \in \Z^d, \quad P(n) = 1.
\]
\end{itemize}
In these three cases, all the assumptions are easy to check.
\section{The linearized system}
\label{sectionlinear}
In this section and in the following one, we study the multiphasic system \eqref{abstracteq} governed by the potential defined in \eqref{abstractforce} with the assumptions \eqref{intPhi}, \eqref{intdPhi} and \eqref{assumptionP}. In this setting, defining for all $w$ $\rho^w \equiv 1$ and $v^w \equiv w$ leads to a stationary solution. The linearized system around this stationary solution is
\begin{equation}
\label{abstractlineq}
\left\{ \begin{gathered}
\forall w \in \R^d, \quad \partial_t r^{w}(t,x) + w \cdot \nabla r^{w}(t,x) + \Div ( u^w(t,x)) = 0, \\
\forall w \in \R^d, \quad \partial_t u^{w}(t,x) + (w \cdot \nabla)u^{w}(t,x) = - \nabla V[\rr(t), \uu(t)](x),\\
V[\rr, \uu](x) := A\left[  \int \left\{ \Phi(w) r^w + \D \Phi(w)\cdot u^w \right\} \D \mu(w) \right](x), \\
\forall w \in \R^d, \quad r^{w}|_{t=0} = r^{w}_0 \mbox{ and } u^{w}|_{t=0} = u^{w}_0.
\end{gathered} \right.
\end{equation}
\subsection{Spectral analysis}
\label{spectralstudy}
We look for the exponential growing modes of system \eqref{abstractlineq} \emph{i.e.} the non-zero solutions of the form
\begin{equation}
\label{exponentialgrowingmode}
\left\{
\begin{aligned}
r^w(t,x) &= f(w) \exp(\lambda t)\exp( in \cdot x ),\\
u^w(t,x) &= g(w)  \exp(\lambda t)\exp( in \cdot x ).
\end{aligned}
\right.
\end{equation}
with $n \in \Z^d$, $\lambda \in \C$ such that $\Re(\lambda)>0$, and $f: \R^d \to \C$ and $g: \R^d \to \C^d$ in $L^{\infty}(\mu)$ (for $V$ to be well defined thanks to \eqref{intPhi} and \eqref{intdPhi}). Injecting this ansatz in \eqref{abstractlineq}, we get that for all $w \in \R^d$,
\[
\left\{
\begin{aligned}
(\lambda + in \cdot w) f(w) &=-i n \cdot g(w),\\
(\lambda + in \cdot w) g(w) &= -i \left( P(n) \cdot \int \Big\{ \Phi(w') f(w') + \D \Phi(w') \cdot g(w') \Big\} \D \mu(w') \right) n.
\end{aligned}
\right.
\]
As a consequence, if $(\rr, \uu)$ is a non-trivial solution, then $n \neq 0$ and 
\[
P(n) \cdot \int \Big\{ \Phi(w') f(w') + \D \Phi(w') \cdot g(w') \Big\} \D \mu(w') \neq 0.
\]

Up to dividing $f$ and $g$ by this number, we can suppose that it is equal to $1$. Then, we get for all $w$,
\begin{equation} 
\label{deffg}
f(w) =-\frac{|n|^2}{(\lambda + in\cdot w)^2} \quad \mbox{and} \quad g(w) = \frac{-i n}{\lambda + in\cdot w}.
\end{equation}
Such $f$ and $g$ are bounded.

Then, setting for all $w\in \R^d$
\[
\Psi(n,\lambda,w) := \frac{\Phi(w)}{\lambda + in\cdot w},
\]
we require:
\begin{align*}
1 &= P(n) \cdot \int \Big\{ \Phi(w) f(w) + \D \Phi(w) \cdot g(w) \Big\} \D \mu(w)\\
&= P(n) \cdot \int \left\{ -\Phi(w)\frac{|n|^2}{(\lambda + in\cdot w)^2} -i \frac{\D \Phi(w) \cdot n}{\lambda + in\cdot w} \right\} \D \mu(w) \\
&= -i P(n) \cdot \int \left\{ -i\Phi(w)\frac{|n|^2}{(\lambda + in\cdot w)^2} + \frac{\D \Phi(w) \cdot n}{\lambda + in\cdot w} \right\} \D \mu(w) \\
&= -i P(n) \cdot \int \partial_w \Psi(n,\lambda,w) \cdot n \D \mu(w)\\
&=-i \left(\int \partial_w \left\{\frac{ P(n) \cdot \Phi(w)}{\lambda + in\cdot w}\right\}\cdot n \D \mu(w) \right) .
\end{align*}
In particular, we get the following general Penrose condition:
\begin{equation}
\label{penrosecondition}
\int \partial_w \left\{\frac{ P(n) \cdot \Phi(w)}{\lambda + in\cdot w}\right\}\cdot n \D \mu(w)   = i,
\end{equation}
for instability to hold.

Conversely, if \eqref{penrosecondition} holds for some $n \in \Z^d$ and $\Re(\lambda)>0$, and if we define $f$ and $g$ by \eqref{deffg}, then the exponential growing modes \eqref{exponentialgrowingmode} are (classical, unstable) solutions to the linearized equation \eqref{abstractlineq}.

In the end, we have proved the following proposition.
\begin{Prop}
System \eqref{abstractlineq} admits exponential growing modes if and only if there exists $n \in \Z^d$ and $\lambda \in \C$ with $\Re(\lambda) >0$ satisfying \eqref{penrosecondition}. In that case, $f$ and $g$ are given up to a scalar by \eqref{deffg}.
\end{Prop}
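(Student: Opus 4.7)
The plan is to substitute the ansatz \eqref{exponentialgrowingmode} directly into the linearized system \eqref{abstractlineq} and analyze the algebraic constraints that fall out; this is a straightforward spectral calculation since the stationary background is homogeneous in $x$ and pure Fourier modes diagonalize the linear operator.

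First, I would insert $r^w(t,x) = f(w)\exp(\lambda t)\exp(in\cdot x)$ and $u^w(t,x) = g(w)\exp(\lambda t)\exp(in\cdot x)$ into the first two equations of \eqref{abstractlineq}. After cancelling the common exponential, the time derivative contributes $\lambda$, the transport term contributes $in\cdot w$, and the divergence/gradient each produce a factor of $in$, so that
\[
(\lambda + in\cdot w)f(w) = -in \cdot g(w), \qquad (\lambda + in\cdot w)g(w) = -ic\,n,
\]
where $c := P(n)\cdot \int \{\Phi(w')f(w') + \D \Phi(w')\cdot g(w')\}\D\mu(w')$ is the scalar obtained by evaluating $V[\rr,\uu]$ on the ansatz.

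Next, I would rule out the degenerate possibilities. Since $\Re(\lambda)>0$, the factor $\lambda + in\cdot w$ never vanishes; hence $n = 0$ forces $g \equiv 0$ and then $f \equiv 0$, while $c = 0$ likewise forces $g \equiv 0$ and then $f \equiv 0$. Any non-trivial mode must therefore satisfy $n \neq 0$ and $c \neq 0$. Rescaling $(f,g)\mapsto (f/c, g/c)$, I may assume $c = 1$, at which point the two algebraic equations are explicitly solved to give \eqref{deffg}. Re-injecting these expressions into the identity $c = 1$, combining the two integrands, and recognizing the result as $\partial_w\Psi(n,\lambda,w)\cdot n$, produces the Penrose condition \eqref{penrosecondition}, exactly as laid out in the chain of equalities displayed above the statement.

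For the converse, given any $n \in \Z^d$ and $\lambda$ with $\Re(\lambda)>0$ satisfying \eqref{penrosecondition}, the pair $(f,g)$ defined by \eqref{deffg} is bounded on $\R^d$ because $|\lambda + in\cdot w|\geq \Re(\lambda)>0$ uniformly in $w$, so $V$ makes sense thanks to \eqref{intPhi} and \eqref{intdPhi}, and a direct check shows that the ansatz \eqref{exponentialgrowingmode} solves \eqref{abstractlineq}. There is no serious obstacle to this proof: it is a sequence of substitutions. The only subtle step is legitimizing the normalization $c=1$, which crucially uses the strict positivity of $\Re(\lambda)$ to keep the denominators in \eqref{deffg} uniformly bounded away from zero.
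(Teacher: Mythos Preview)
Your proposal is correct and follows essentially the same approach as the paper: substitute the ansatz, extract the two algebraic equations, rule out $n=0$ and $c=0$ using $\Re(\lambda)>0$, normalize $c=1$, solve for $f$ and $g$, and re-inject to obtain \eqref{penrosecondition}; the converse is then a direct verification. The paper's argument is exactly this computation, carried out in the paragraphs preceding the proposition.
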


Consequently, we define what is an unstable profile in the following way.
\begin{Def}[Unstable profile]
	\label{def:unstability} We say that the probability measure $\mu$ on $\R^d$ is unstable if there exist $n \in \Z^d$ and $\lambda \in \C$ with $\Re(\lambda)>0$ such that \eqref{penrosecondition} holds.
\end{Def}

\begin{Rem} \label{eigenpotential}
When \eqref{penrosecondition} holds, with this choice of $f$ and $g$, the potential takes a very simple form:
\begin{align*}
V[&\rr(t), \uu(t)](x)\\
& = \left( P(n) \cdot \int \Big\{ \Phi(w') f(w') + \D \Phi(w') \cdot g(w') \Big\} \D \mu(w') \right) \exp(\lambda t) \exp(in\cdot x)\\
&= \exp(\lambda t) \exp(in\cdot x).
\end{align*}
Hence, it is natural to use this quantity to evaluate the size of our solutions, as it is done in Theorem \ref{kineticstatementVP} and Theorem \ref{kineticstatementKEuVB}.
\end{Rem}
\subsubsection*{Examples}\begin{itemize}
\item In the case of the Vlasov-Poisson equation \eqref{VP}, \eqref{penrosecondition} reads
\begin{equation}
\label{penroseVPmeasure}
\int \frac{\D \mu(w)}{(\lambda + in \cdot w)^2} = - 1.
\end{equation}
as expected by the combination of \eqref{penroseVP} and \eqref{integrationbyparts} (with the correspondence $\lambda = -in\cdot \omega$). In particular, the multiphasic formulation and the kinetic one have the exact same unstable eigenvalues.
\item In the case of the kinetic Euler system \eqref{KEu'}, \eqref{penrosecondition} reads
\begin{align*}
i&= \frac{1}{|n|^2} \int \partial_w \left\{ \frac{(in\cdot w)^2}{\lambda + in\cdot w}\right\}\cdot n \D \mu(w) \\
&= \frac{1}{|n|^2} \int \left( \frac{-2|n|^2 n \cdot w}{\lambda + in \cdot w} + \frac{i |n|^2 (n \cdot w)^2}{(\lambda + in\cdot w)^2} \right) \D \mu(w) \\
&= i \int \frac{2(\lambda + in \cdot w) i n \cdot w + (n\cdot w)^2}{(\lambda + in \cdot w)^2}\D \mu(w) \\
&= i \int \frac{2i \lambda n \cdot w - (n\cdot w)^2}{(\lambda + in \cdot w)^2}\D \mu(w) \\
&= i \int \left( 1 - \frac{\lambda^2}{(\lambda + i n\cdot w)^2} \right)\D \mu(w)\\
&= i - \lambda^2 i \int \frac{\D \mu(w)}{(\lambda + i n \cdot w)^2}.
\end{align*}
So we get the expected Penrose condition in this context (the combination of \eqref{penroseKEu} and \eqref{integrationbyparts})
\begin{equation}
\label{penroseKEumeasure}
\int\frac{ \D \mu(w) }{(\lambda + in \cdot w)^2} = 0.
\end{equation}
\item Finally, in the case of the Dirac-Benney system \eqref{VB}, similar computations show that \eqref{penrosecondition} reads
\begin{equation}
\label{penroseVBmeasure}
\int\frac{ \D \mu(w) }{(\lambda + in \cdot w)^2} = -\frac{1}{|n|^2}.
\end{equation}
\end{itemize}
\subsection{Sharp semigroup bounds}
\label{sharpsemigroupbound}
In this subsection, we will derive sharp estimates in analytic regularity for the semigroup corresponding to system \eqref{abstractlineq}. The philosophy for this result is the following: to build solutions to \eqref{abstracteq}-\eqref{abstractforce}, we will consider the nonlinear part of the system as a perturbation of the linear part. As long as the linear part of the solution is small, we will be able to deduce that the perturbation is even smaller and to perform a fixed point proof. So we want the estimate on the semigroup to be sharp for the fixed point argument to work until the longest possible times. We work in analytic regularity because in general, the only bound that we can get for the spectrum of the linearized operator is the fact that the unstable spectrum increases proportionally with the frequency of the exponential growing modes, as stated in Proposition \ref{gamma0finite} below.

For each $n \in \Z^d$, we call
\begin{equation}
\label{defSn}
S_n := \{ \lambda \in \C \mbox{ such that } \Re(\lambda) >0 \mbox{ and the Penrose condition }\eqref{penrosecondition} \mbox{ holds} \}.
\end{equation}
We already saw that $S_0$ is empty.

We also call
\begin{equation}
\label{defLambda}
\begin{aligned}
\Lambda_n(\lambda) &:= \int \partial_w \left\{\frac{ P(n) \cdot \Phi(w)}{\lambda + in\cdot w}\right\}\cdot n \D \mu(w)\\
&=i P(n) \cdot \int \left\{ -\Phi(w)\frac{|n|^2}{(\lambda + in\cdot w)^2} -i \frac{\D \Phi(w) \cdot n}{\lambda + in\cdot w} \right\} \D \mu(w) .
\end{aligned}
\end{equation}

The first observation to be made is that under condition \eqref{assumptionP}, the size of $S_n$ does not grow too much with $n$. More precisely, we have the following proposition.
\begin{Prop}
\label{gamma0finite}
We have:
\[
\sup_{n \in \Z^d \backslash \{0\}} \sup_{\lambda \in S_n} \frac{\Re(\lambda)}{|n|} < + \infty. 
\]
\end{Prop}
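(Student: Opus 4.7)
The plan is to use the Penrose condition $\Lambda_n(\lambda) = i$ directly, combined with the two integrability bounds \eqref{intPhi} and \eqref{intdPhi} and the uniform bound \eqref{assumptionP} on $P$, to show that if $\Re(\lambda)/|n|$ is too large then $|\Lambda_n(\lambda)|$ is forced to be strictly smaller than $1$, yielding a contradiction.

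The starting point is the second expression for $\Lambda_n(\lambda)$ given in \eqref{defLambda}. Since $\Re(\lambda) > 0$, the elementary inequality
\[
|\lambda + in\cdot w| \geq \Re(\lambda) \qquad \text{for all } w \in \R^d
\]
yields the pointwise bounds
\[
\left|\frac{|n|^2}{(\lambda + in\cdot w)^2}\right| \leq \frac{|n|^2}{\Re(\lambda)^2}, \qquad \left|\frac{\D \Phi(w)\cdot n}{\lambda + in\cdot w}\right| \leq \frac{|n|\,|\D \Phi(w)|}{\Re(\lambda)}.
\]
Integrating against $\mu$ and invoking \eqref{intPhi}, \eqref{intdPhi} and \eqref{assumptionP} gives
\[
|\Lambda_n(\lambda)| \leq M\left( M\,\frac{|n|^2}{\Re(\lambda)^2} + M\,\frac{|n|}{\Re(\lambda)}\right) = M^2\left(\frac{|n|^2}{\Re(\lambda)^2} + \frac{|n|}{\Re(\lambda)}\right).
\]

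Now if $\lambda \in S_n$, the Penrose condition \eqref{penrosecondition} forces $|\Lambda_n(\lambda)| = 1$. Setting $t := \Re(\lambda)/|n|$, the estimate above becomes
\[
1 \leq M^2\left(\frac{1}{t^2} + \frac{1}{t}\right),
\]
so $t$ must stay bounded away from infinity. Explicitly, one can take any $t_0 > 0$ with $M^2(t_0^{-2} + t_0^{-1}) < 1$, and the proposition follows with
\[
\sup_{n\neq 0} \sup_{\lambda \in S_n} \frac{\Re(\lambda)}{|n|} \leq t_0.
\]

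There is no real obstacle here; the only thing to check carefully is that the two moment conditions actually in the assumptions, namely \eqref{intPhi} and \eqref{intdPhi}, are sufficient, so the stronger bounds \eqref{intdPhiseries} and \eqref{intd2Phiseries} play no role. This suggests that the proposition holds in a slightly more general setting than the full assumption package, but for the purposes of the paper the above argument is exactly what is needed.
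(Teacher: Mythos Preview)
Your argument is correct and is essentially the same as the paper's: both bound $|\Lambda_n(\lambda)|$ by $M^2\big(|n|/\Re(\lambda) + |n|^2/\Re(\lambda)^2\big)$ using \eqref{intPhi}, \eqref{intdPhi} and \eqref{assumptionP}, and conclude that the Penrose condition $|\Lambda_n(\lambda)|=1$ forces $\Re(\lambda)/|n|$ to stay bounded. The only cosmetic difference is that the paper makes the substitution $\lambda = |n|\,l$ at the outset rather than introducing $t = \Re(\lambda)/|n|$ at the end.
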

\begin{proof}
For $n \in \Z^d \backslash\{0\}$, we call
\[
\widetilde{S_n} := \{ l \in \C \mbox{ such that } |n| l \in S_n \}.
\]
Now for all $n \in \Z^d \backslash \{0\}$, for all $w \in \R^d$ and for all $l \in \C$ such that $\Re(l) >0$, using \eqref{assumptionP},
\begin{align*}
\left| \partial_w \left\{ \frac{P(n) \cdot \Phi(x)}{|n|l + i n \cdot w} \right\}\cdot n \right| & \leq \big|P(n) \big| \left\{ \frac{|\D \Phi(w)|}{\Re(l)} + \frac{|\Phi(w)|}{\Re(l)^2 } \right\} \\
&\leq M\left\{ \frac{|\D \Phi(w)|}{\Re(l)} + \frac{|\Phi(w)|}{\Re(l)^2} \right\}
\end{align*}
In particular, integrating with respect to $\mu$ and using \eqref{intPhi}, \eqref{intdPhi} and \eqref{defLambda} leads to
\[
 |\Lambda_n(|n|l)| \leq \frac{M^2}{\Re(l)} + \frac{M^2}{\Re(l)^2} \underset{\Re(l) \to +\infty}{\longrightarrow} 0.
\]
As a consequence, there is $C>0$ (independent of $n$) such that if $\Re(l) \geq C$, then for any $n \in \Z^d\backslash\{0\}$, the modulus of $\Lambda_n(|n|l)$ is lower than $1/2$, and so the Penrose condition \eqref{penrosecondition} cannot hold with $\lambda = |n|l$.
\end{proof}

Form now on, we suppose that $\mu$ is unstable, that is $\cup_{n  \in \Z^d} S_n \neq \emptyset$. We set
\begin{equation}
\label{defgamma0}
\gamma_0 := \sup_{n \in \Z^d \backslash\{0\}} \sup_{\lambda\in S_n} \frac{\Re(\lambda)}{|n|}>0.
\end{equation}

We want to show some bounds on the semigroup related to system \eqref{abstractlineq} in analytic regularity. To do so, following Grenier in \cite{grenier1996oscillations}, we introduce the following Banach spaces of analytic functions (in $x$). Take $\delta >0$, and $f$ a function on $\T^d$. We say that $f$ belongs to $X_{\delta}$ if it can be written for all $x \in \T^d$:
\[
f(x) = \sum_{n \in \Z^d} \hat{f}_n \exp(in\cdot x),
\]
with
\begin{equation*}
|f|_{\delta} := \sum_{n \in \Z^d}|\hat{f}_n| \exp(\delta |n|) < +\infty.
\end{equation*}
Remark that this formula makes sense for $f$ with value in $\R$, $\R^d$ or $E$ taking for $|\bullet|$ any norm on the corresponding vector space. So with a slight abuse of notations, we will still write $f \in X_{\delta}$ in all these cases.

Now, if $\FF = (f^w)_{w \in \R^d}$ is a family of functions on $\T^d$, we say that $\FF$ belongs to $\XX_{\delta}$ if it can be written for all $w \in \R^d$ and $x \in \T^d$:
\[
f^w(x) = \sum_{n \in \Z^d} \hat{f}_n(w) \exp(i n \cdot x)
\]
with for all $n \in \Z^d$, $\hat{f}_n \in L^{\infty}(\R^d, \mu)$ and with
\begin{equation}
\label{defdoublenorm}
\| \FF \|_{\delta} := \sum_{n \in \Z^d} |\hat{f}_n|_{\infty} \exp(\delta |n|) < + \infty.
\end{equation}
Once again, we keep the same notations for the values of $f$ to be in $\R$, $\R^d$ or $E$.

Finally, to gain space, if $\rr = (r^w)_{w \in \R^d} \in \XX_{\delta}$ is a family of functions from $\T^d $ to $\R$ and $\uu = (u^w)_{w \in \R^d}\in \XX_{\delta}$ is a family of functions from $\T^d$ to $\R^d$, we write
\begin{equation}
\label{defnormcouple}
\|(\rr, \uu)\|_{\delta} := \max \big(\| \rr \|_{\delta}, \| \uu \|_{\delta}\big).
\end{equation}
More generally, if $\FF$ and $\GG$ are two families of functions, $\| \FF, \GG \|_{\delta}$ will stand for the $\max$ between $\| \FF \|_{\delta}$ and $\| \GG \|_{\delta}$.

Remark that all the exponential growing modes found in the previous subsection belong for all $t$ and all $\delta$ to $\XX_{\delta}$.

We also write
\[
\boldsymbol{L}^{\infty} := L^{\infty}((\R^d , \mu); \R) \times L^{\infty}((\R^d , \mu); \R^d).
\]
Its norm is defined by
\[
\forall (\hat{r},\hat{u}) \in \boldsymbol{L}^{\infty}, \quad |(\hat{r},\hat{u})|_{\infty} := \max( |\hat{r}|_{\infty} , |\hat{u}|_{\infty}\big).
\]
With the same notation as before, we can see that
\[
\frac{1}{2}\sum_{n \in \Z^d}|(\hat{r}_n, \hat{u}_n)|_{\infty} \exp(\delta |n|) \leq \|(\rr, \uu)\|_{\delta} \leq \sum_{n \in \Z^d}|(\hat{r}_n, \hat{u}_n)|_{\infty} \exp(\delta |n|).
\]

The rest of this subsection will be devoted to the proof of the following theorem.
\begin{Thm}
\label{sharpestimate}
Let $\delta_0>0$ and $(\rr_0, \uu_0) \in \XX_{\delta_0}$. There exists a unique classical solution $(\rr(t), \uu(t))$ to \eqref{abstractlineq} at time $t\in[0, \delta_0 / \gamma_0)$ starting from $(\rr_0, \uu_0)$. It satisfies the following properties:
\begin{itemize}
	\item for all $\gamma > \gamma_0$, there exists $C$ only depending on $M$ and $\gamma$ (and not $\delta_0$ nor $(\rr_0, \uu_0)$) such that for all $t\leq \delta_0/ \gamma$,
	\[
	\big\| (\rr(t), \uu(t)) \big\|_{\delta_0 - \gamma t} \leq C \big\|(\rr_0, \uu_0)\big\|_{\delta_0}.
	\]
	\item for all $\delta < \delta_0$, the map 
	\begin{equation*} 
	t \in \left[0, \frac{\delta_0 - \delta}{\gamma_0} \right) \mapsto (\rr(t), \uu(t)) \in \XX_{\delta}
	\end{equation*} is continuous,
	\item if $(\rr_0, \uu_0) \in \boldsymbol{L}_0$ (defined in Definition \ref{defL0}), then this property is propagated: for all $t < \delta_0 / \gamma_0$, $(\rr(t), \uu(t)) \in \boldsymbol{L}_0$.
\end{itemize}
\end{Thm}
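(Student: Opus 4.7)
The plan is to diagonalise the system in Fourier series in $x$, solve each spatial mode using a Laplace transform in $t$, then reassemble. Writing $r^w(t,x) = \sum_{n\in\Z^d}\hat r_n^w(t)\,e^{in\cdot x}$ and similarly for $u^w$, each spatial mode $n$ evolves independently, coupled across phases only through the scalar
\[
\widehat{V}_n(t) \;=\; P(n)\cdot\int\bigl[\Phi(w)\hat r_n^w(t) + D\Phi(w)\cdot \hat u_n^w(t)\bigr]\,\D\mu(w).
\]
The mode $n=0$ is pure transport and trivial. For $n\neq 0$, I would integrate along the $w\cdot\nabla$-characteristics to express $\hat r_n^w(t)$ and $\hat u_n^w(t)$ as Duhamel formulas in $\widehat{V}_n$ and the initial data; substituting these back into the definition of $\widehat{V}_n$ produces a scalar Volterra equation for $\widehat{V}_n$ alone.

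The Laplace transform in $t$ then turns this Volterra equation into the algebraic identity
\[
\bigl(1 + i\Lambda_n(z)\bigr)\,\widetilde{V}_n(z) \;=\; G_n(z),
\]
with $\Lambda_n$ as in \eqref{defLambda} and $G_n$ a linear functional of the initial data controlled by $C\,|(\hat r_n^w(0),\hat u_n^w(0))|_\infty/\Re z$. The zeros of $z\mapsto 1+i\Lambda_n(z)$ in the right half-plane are exactly the elements of $S_n$ (this is the Penrose condition $\Lambda_n(\lambda)=i$), and by \eqref{defgamma0} they all satisfy $\Re z\leq \gamma_0|n|$. Given $\gamma>\gamma_0$, the Bromwich contour $\{\Re z=\gamma|n|\}$ therefore lies strictly to the right of all resolvent singularities, and inverting the Laplace transform along it should yield
\[
\bigl|\widehat{V}_n(t)\bigr| \;\leq\; C\,e^{\gamma|n|t}\,\bigl|(\hat r_n^w(0),\hat u_n^w(0))\bigr|_\infty
\]
with $C$ depending only on $M$ and $\gamma$. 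Feeding this back into the Duhamel formulas, whose underlying transport operators act isometrically on $L^\infty(\R^d,\mu)$, gives the same bound for $|(\hat r_n^w(t),\hat u_n^w(t))|_\infty$. Multiplying by $e^{(\delta_0-\gamma t)|n|}$ and summing over $n$ then delivers the main analytic estimate, provided $t\leq \delta_0/\gamma$ so that the weight stays nonnegative.

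The main difficulty will be the uniform-in-$n$ integrability of the inverse Laplace transform along the contour at height $\gamma|n|$: one must both bound $|1+i\Lambda_n(z)|$ from below, and obtain enough decay of $\Lambda_n(z),G_n(z)$ in $|\Im z|$. For the lower bound, I would rescale $z=|n|l$, reducing $\Lambda_n(|n|l)$ to a function of $(l,\hat n,P(n))$ with $\hat n=n/|n|$ and $|P(n)|\leq M$, and combine the crude estimate $|\Lambda_n(|n|l)|\leq M^2/\Re l + M^2/(\Re l)^2$ from the proof of Proposition \ref{gamma0finite} with the very definition of $\gamma_0$ as the supremum of the real parts of the (rescaled) poles. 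For the decay in $|\Im z|$, I would integrate by parts in $w$ inside \eqref{defLambda}, exploiting the smoothness of $\Phi$ encoded in \eqref{intdPhiseries}--\eqref{intd2Phiseries} to produce a Riemann--Lebesgue-type factor $1/|\Im z|$. The three remaining conclusions then follow easily: uniqueness is forced by linearity applied to the difference of two solutions; continuity of $t\mapsto(\rr(t),\uu(t))$ in $\XX_\delta$ for $\delta<\delta_0$ follows from dominated convergence on the Fourier series, each coefficient being continuous as an ODE solution while the main estimate provides a summable dominant; and the space $\boldsymbol{L}_0$ is preserved because integrating the first equation of \eqref{abstractlineq} over $\T^d$ yields $\partial_t\int_{\T^d}r^w\,\D x=0$, while the identity $(w\cdot\nabla)\nabla\phi=\nabla(w\cdot\nabla\phi)$ shows that if $u^w(0)$ is a gradient then its evolution $\partial_t u^w + (w\cdot\nabla)u^w = -\nabla V$ reduces (up to a function of $t$ alone) to a scalar transport equation for a potential, so $u^w(t)$ remains a gradient.
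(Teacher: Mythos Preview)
Your overall architecture (Fourier in $x$, then Laplace in $t$ mode-by-mode, then resum) is exactly what the paper does, and your treatment of uniqueness, continuity in $\XX_\delta$, and preservation of $\boldsymbol{L}_0$ is fine. But the two places you flag as ``the main difficulty'' are precisely where your sketch has real gaps.

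\textbf{Uniform lower bound on $|1+i\Lambda_n(z)|$.} Rescaling $z=|n|l$ and noting that $\Lambda_n(|n|l)$ depends only on $(l,\hat n,P(n))$ is the right starting move, and the crude bound from Proposition~\ref{gamma0finite} does give $|\Lambda_n(|n|l)-i|\geq 1/2$ once $\Re l$ is large. But for $\gamma\leq \Re l\leq C_0$ you only know, from the \emph{definition} of $\gamma_0$, that $\Lambda_n(|n|l)\neq i$ for each individual $n$; nothing prevents $\inf_n\inf_{\Re l\geq\gamma}|\Lambda_n(|n|l)-i|=0$ \emph{a priori}. The paper closes this with a genuine argument (Proposition~\ref{propfarfromi}): assume a sequence $F_{n_k}(\xi_k)\to i$, use the crude bound and dominated convergence to show $(\xi_k)$ is bounded, extract limits $\hat n_{k}\to u$, $P(n_k)\to P$, $\xi_k\to\xi_\infty$ so that $F_{n_k}\to F$ locally uniformly with $F(\xi_\infty)=i$, and then apply Rouch\'e's theorem on a small circle around $\xi_\infty$ to produce a zero of $F_{n_k}-i$ with $\Re\xi>\gamma_0$, contradicting the definition of $\gamma_0$. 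Your sentence ``combine the crude estimate with the very definition of $\gamma_0$'' is not a substitute for this compactness-plus-Rouch\'e step.

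\textbf{Integrability along the Bromwich contour.} Your plan to ``integrate by parts in $w$ inside \eqref{defLambda}'' cannot work here: $\mu$ is an arbitrary probability measure, with no density or regularity assumed, so there is nothing to integrate by parts against. More importantly, even with the lower bound on $|1+i\Lambda_n|$ in hand, $\widetilde V_n(z)=I_n(G[z])/(1+i\Lambda_n(z))$ is in general \emph{not} in $L^1$ along $\{\Re z=\gamma|n|\}$: the term $I_n(G[z])$ is controlled only by $\int \frac{|\Phi|+|\mathrm{D}\Phi|}{|z+in\cdot w'|}\,\D\mu(w')$, and $\int_{-\infty}^{+\infty}\frac{\D q}{|\gamma|n|+iq|}$ diverges. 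So you cannot read off the bound on $\widehat V_n(t)$ by inverting the Laplace transform of $\widetilde V_n$ directly. The paper's fix is to subtract the free-transport semigroup and work with $h(t)=(e^{t(\boldsymbol A_n+\boldsymbol B_n)}-e^{t\boldsymbol A_n})(\alpha_0,\beta_0)$, whose Laplace transform $H[p](w)$ carries an \emph{additional} resolvent factor $1/(p+in\cdot w)$ on top of the $1/(p+in\cdot w')$ already present in $I_n(G[p])$. The resulting product kernel satisfies the elementary but crucial bound
\[
\int_{-\infty}^{+\infty}\frac{\D q}{\bigl|\gamma|n|+i(q+n\cdot w)\bigr|\,\bigl|\gamma|n|+i(q+n\cdot w')\bigr|}\;\leq\;\frac{C(\gamma)}{|n|}
\]
uniformly in $w,w'$ (an explicit computation, see \eqref{estimlaplacetransform}), which is what makes $H[p]$ integrable along the contour and yields $|h(t)|_\infty\leq C e^{\gamma|n|t}|(\alpha_0,\beta_0)|_\infty$. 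The free-transport piece is handled separately by the polynomial bound $\|e^{t\boldsymbol A_n}\|\leq 1+t|n|$ (note: it is \emph{not} an isometry, because of the $\Div u^w$ coupling). In your Volterra language, the correct object to Laplace-invert is not $\widetilde V_n$ but the forced part of $(\hat r_n^w,\hat u_n^w)$, and the needed decay in $|\Im z|$ comes from this double-resolvent structure, not from any smoothness of $\mu$.
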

\begin{Rem}
If we write
\begin{equation}
\label{defS}
(\rr(t), \uu(t)) =: S_t(\rr_0, \uu_0),
\end{equation}
then the theorem shows that
\begin{equation}
\label{estimS}
\big\| S_t (\rr_0, \uu_0) \big\|_{\delta_0 - \gamma t} \leq C \big\|(\rr_0, \uu_0)\big\|_{\delta_0}.
\end{equation}
\end{Rem}
\begin{proof} Take $\gamma$, $\delta$ and $(\rr_0, \uu_0)$ as in the statement of Theorem \ref{sharpestimate}.

Let $(\rr, \uu) = ((t,x)\mapsto r^w(t,x), u^w(t,x))_{w \in \R^d}$ be a time dependent family of $C^1$ functions. For all $n \in \Z^d$, we call $\hat{\rr}_n = (\hat{r}_n(t,w))$ and $\hat{\uu}_n = (\hat{u}_n(t,w))$ the Fourier coefficients of these functions, so that for all $(t,x,w)$,
\begin{equation}
\label{rufourier}
r^w(t,x) = \sum_{n \in \Z^d} \hat{r}_n(t,w) \exp(in\cdot x) \quad \mbox{and} \quad u^w(t,x) = \sum_{n \in \Z^d} \hat{u}_n(t,w) \exp(in\cdot x).
\end{equation}
Then $(\rr, \uu)$ is a solution to \eqref{abstractlineq} if and only if for all $n\in \Z^d$ and $w \in \R^d$, the pair $(\hat{\rr}_n, \hat{\uu}_n) $ is a solution to
\begin{equation}
\label{fouriereq}
\left\{ \begin{gathered}
\partial_t \begin{bmatrix}
\hat{r}_n(t,w) \\
\hat{u}_n(t,w)
\end{bmatrix}
+ i \begin{bmatrix}
n \cdot w & n  \\  
0 & n \cdot w \Id_{\R^d}
\end{bmatrix} \hspace{-3pt} \cdot \hspace{-3pt} \begin{bmatrix}
\hat{r}_n(t,w) \\
\hat{u}_n(t,w)
\end{bmatrix} = I_n(\hat{\rr}_n(t), \hat{\uu}_n(t))\begin{bmatrix}
0 \\
-in
\end{bmatrix},\\
\hat{r}_n(0,w) \mbox{ and } \hat{u}_n(0,w) \mbox{ are the }n\mbox{th Fourier coefficients of }r_0^w \mbox{ and } u_0^w,
\end{gathered} \right.
\end{equation}
with
\[
I_n(\hat{\rr}_n(t), \hat{\uu}_n(t)) := P(n) \cdot \int \left\{\Phi(w') \hat{r}_n(t,w') + \D \Phi(w')\cdot \hat{u}_n(t,w') \right\} \D \mu(w').
\]

Now it suffices to show the following lemma.
\begin{Lem}
\label{fourierlemma}
For each $n \in \Z^d$, equation \eqref{fouriereq} admits a unique solution $(\hat{\rr}_n, \hat{\uu}_n)$ for all times and it is a continuous map from $\R_+$ to $\boldsymbol{L}^{\infty}$. 

Moreover, for all $\gamma > \gamma_0$, there exists $C$ only depending on $M$ and $\gamma$ such that this solution satisfies
\begin{equation*}
|(\hat{\rr}_n(t),\hat{\uu}_n(t))|_{\infty} \leq C |(\hat{\rr}_n(0),\hat{\uu}_n(0))|_{\infty} \exp(\gamma |n| t).
\end{equation*}
\end{Lem}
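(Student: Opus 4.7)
The plan is to decouple the $w$-dependence in \eqref{fouriereq}: since the only coupling across different $w$'s is the scalar functional $\hat{J}_n(t) := I_n(\hat{\rr}_n(t), \hat{\uu}_n(t))$, I will treat $\hat{J}_n$ as the unknown, recover the rest by Duhamel, and reduce \eqref{fouriereq} to a scalar Volterra integral equation. The left-hand side of \eqref{fouriereq} is upper triangular with common diagonal $in\cdot w$, so solving first for $\hat{u}_n$ and then for $\hat{r}_n$ yields explicit formulas expressing $\hat{r}_n(t,w)$ and $\hat{u}_n(t,w)$ as combinations of $e^{-in\cdot w t}\hat{r}_n(0,w)$, $t\,e^{-in\cdot w t}(n\cdot \hat{u}_n(0,w))$, $e^{-in\cdot w t}\hat{u}_n(0,w)$ and time convolutions of $\hat{J}_n$ against the weights $e^{-in\cdot w\tau}$ and $\tau\,e^{-in\cdot w\tau}$. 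Reinserting these in the definition of $I_n$ and integrating against $\mu$ produces a scalar Volterra equation
\[
\hat{J}_n(t) + \int_0^t K_n(t-s)\hat{J}_n(s)\D s = g_n(t),
\]
with $g_n$ and $K_n$ explicit oscillatory integrals in $w$ controlled by \eqref{intPhi}, \eqref{intdPhi}, \eqref{assumptionP}. Global existence, uniqueness and continuity of $\hat{J}_n$, and hence of $(\hat{\rr}_n, \hat{\uu}_n) \in C(\R_+; \boldsymbol{L}^{\infty})$ through the Duhamel formulas, will follow by Picard iteration on $[0,T]$ for every $T$.

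For the quantitative bound I will pass to the Laplace side. A direct computation identifies $\hat{K}_n(\lambda) = i\Lambda_n(\lambda)$ for $\Re(\lambda)>0$, so the Penrose condition \eqref{penrosecondition} reads precisely $1 + \hat{K}_n(\lambda) = 0$: the set $S_n$ is exactly the zero set of $1+\hat{K}_n$ in the right half plane, and by Proposition \ref{gamma0finite} all these zeros sit in $\{\Re(\lambda) < \gamma_0 |n|\}$. For $\gamma > \gamma_0$ I will therefore shift the Bromwich contour to the line $\{\Re(\lambda) = \gamma|n|\}$; combining the decay $|\hat{K}_n(\lambda)|\to 0$ uniformly in $n$ as $\Re(\lambda)/|n|\to+\infty$ (already obtained in the proof of Proposition \ref{gamma0finite}) with a compactness/normal-families argument ruling out a sequence of near-zeros accumulating on the shifted contour should yield a uniform lower bound
\[
|1 + \hat{K}_n(\lambda)| \geq c(\gamma, M) > 0 \quad \text{for all } n \in \Z^d \setminus \{0\},\ \Re(\lambda) \geq \gamma|n|.
\]
Bromwich inversion of $\tilde{J}_n(\lambda) = \tilde{g}_n(\lambda)/(1 + \hat{K}_n(\lambda))$ on this line, exploiting the explicit $(\lambda+in\cdot w)^{-1}$ and $(\lambda+in\cdot w)^{-2}$ structure of $\tilde{g}_n$ to obtain integrability in the imaginary direction, will then deliver $|\hat{J}_n(t)| \leq C(\gamma, M) |(\hat{\rr}_n(0), \hat{\uu}_n(0))|_{\infty}\exp(\gamma|n|t)$, and reinjecting this in the Duhamel formulas propagates the same bound to $(\hat{\rr}_n, \hat{\uu}_n)$.

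The hard part will be this uniform-in-$n$ resolvent bound, together with the control in the imaginary direction required for the Bromwich inversion, with a constant depending only on $M$ and $\gamma$. The measure-valued nature of $\mu$ rules out any decay of the oscillatory integrals at high imaginary frequencies, so integrability has to be extracted entirely from the explicit $(\lambda + in\cdot w)^{-k}$ algebraic structure and the bounds \eqref{intPhi}--\eqref{assumptionP}, with no help from smoothness of $\mu$. Everything else — the Duhamel reduction, Picard existence, and the algebraic identification $\hat{K}_n = i\Lambda_n$ — should be routine.
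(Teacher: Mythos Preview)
Your proposal is correct and follows essentially the same route as the paper: the paper also reduces to the Laplace/resolvent side, isolates the scalar quantity $I_n(F[p]) = \tfrac{i}{i-\Lambda_n(p)} I_n(G[p])$ (your $\tilde J_n = \tilde g_n/(1+\hat K_n)$), proves the uniform lower bound $|\Lambda_n(\lambda)-i|\geq\delta$ for $\Re(\lambda)\geq\gamma|n|$ via exactly the Montel/Rouch\'e normal-families argument you sketch (stated separately as Proposition~\ref{propfarfromi}), obtains integrability along the Bromwich line by the explicit $(p+in\cdot w)^{-1}$, $(p+in\cdot w)^{-2}$ computation, and then adds back the free-transport piece $e^{t\boldsymbol A_n}$ whose norm is $1+t|n|\leq C_\gamma e^{\gamma|n|t}$. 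The only organisational difference is that the paper phrases existence via the $C_0$-semigroup $e^{t(\boldsymbol A_n+\boldsymbol B_n)}$ and takes the Laplace transform of the vector-valued difference $h(t)=(e^{t(\boldsymbol A_n+\boldsymbol B_n)}-e^{t\boldsymbol A_n})(\alpha_0,\beta_0)$, whereas you first reduce to the scalar Volterra equation for $\hat J_n$; on the Laplace side the two computations are identical.
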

Indeed, if the lemma is true, the unique classical solution to \eqref{abstractlineq} is given by \eqref{rufourier} with $(\hat{\rr}_n, \hat{\uu}_n)$ given by the lemma. Then, if $\gamma > \gamma_0$, by the lemma, we can find $C$ only depending on $M$ and $\gamma$ such that for all $t \leq \delta_0/\gamma$,
\begin{align*}
\big\| (\rr(t), \uu(t)) \big\|_{\delta_0 - \gamma t} &\leq \sum_{n \in \Z^d}|(\hat{\rr}_n(t),\hat{\uu}_n(t))|_{\infty} \exp\Big(\{\delta_0 - \gamma t\}|n|\Big) \\
&\leq C \sum_{n \in \Z^d}|(\hat{\rr}_n(0),\hat{\uu}_n(0))|_{\infty}  \exp(\delta_0 |n|) \\
&\leq C \|(\rr_0, \uu_0)\|_{\delta_0}.
\end{align*}
Hence, the first point of Theorem~\ref{sharpestimate} is proved.  

For the second point, let $\delta < \delta_0$ and $T < (\delta_0 - \delta) / \gamma_0$. It suffices to prove that
\begin{equation*}
t \in [0,T] \mapsto (\rr(t), \uu(t)) \in \XX_{\delta}
\end{equation*}
is continuous. Take $C$ as given by Lemma~\ref{fourierlemma} with $\gamma := (\delta_0 - \delta)/T$. If $t \in [0,T]$, we need to prove:
\begin{equation*}
\lim_{\substack{s \to t \\ s \in [0,T]}} \sum_{n \in \Z^d} |(\hat{\rr}_n(s),\hat{\uu}_n(s))-(\hat{\rr}_n(t),\hat{\uu}_n(t))|_{\infty} \exp(\delta |n|) = 0.
\end{equation*}
But on the one hand, for all $n \in \Z^d$, $(\hat{\rr}_n, \hat{\uu}_n)$ is continuous in $\boldsymbol{L}^{\infty}$, so that each term of the sum tends to $0$. On the other hand, for all $n \in \Z^d$ and $s \in [0,T]$,
\begin{align*}
|(\hat{\rr}_n(s),\hat{\uu}_n(s))|_{\infty} \exp(\delta |n|)&\leq C |(\hat{\rr}_n(0),\hat{\uu}_n(0))|_{\infty} \exp(\gamma |n| s)\exp(\delta |n|) \\
&\leq C |(\hat{\rr}_n(0),\hat{\uu}_n(0))|_{\infty} \exp(\gamma |n| T)\exp(\delta |n|) \\
& =C |(\hat{\rr}_n(0),\hat{\uu}_n(0))|_{\infty} \exp(\delta_0 |n| )
\end{align*}
where the last line is obtained by definition of $\gamma$. This bound does not depend on $s$ and is summable with respect to $n$. So the dominated convergence theorem applies and the result follows.

Finally, $S_t \boldsymbol{L}_0 \subset \boldsymbol{L}_0$ is a consequence of the fact that the first equation for $n=0$ reduces to:
\[
\partial_t \hat{r}_0(t,w)= 0,
\]
and that the second equation for any $n \in \Z^d$ ensures that for all $t$ and $w$, the vector $\partial_t \hat{u}_n(t,w) + in \cdot w \hat{u}_n(t,w)$ is collinear with $n$.
\end{proof}
In order to prove Lemma \ref{fourierlemma}, we need to state a result for the family of holomorphic functions $(\Lambda_n)_{n \in \Z^d}$ (which was defined in \eqref{defLambda}). By the definition \eqref{defgamma0} of $\gamma_0$, we already know that if $n \in \Z^d$ and $\lambda$ is such that $\Re(\lambda) > \gamma_0 |n|$, then $\Lambda_n(\lambda) \neq i$. We need a stronger result, which tells that if $\Re(\lambda) \geq \gamma |n| > \gamma_0 |n|$, then $\Lambda_n(\lambda)$ stays far from $i$ uniformly in $n$ and $\Im(\lambda)$. This is the content of the following proposition. We postpone its proof to the end of the subsection.
\begin{Prop}
\label{propfarfromi}
For all $\gamma > \gamma_0$, there exists $\delta>0$ such that for all $n \in \Z^d$, for all $\lambda$ with $\Re(\lambda) \geq \gamma |n|$, 
\[
|\Lambda_n(\lambda) - i| \geq \delta.
\]
\end{Prop}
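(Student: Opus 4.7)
The plan is to argue by contradiction using the rescaled variable $l := \lambda/|n|$ and to conclude via Hurwitz's theorem. First, the case $n = 0$ is trivial: \eqref{defLambda} gives $\Lambda_0 \equiv 0$, so $|\Lambda_0 - i| = 1$. For $n \neq 0$, setting $e := n/|n| \in S^{d-1}$, one writes $\Lambda_n(|n|l) = iP(n) \cdot H(e, l)$ with
\[
H(e, l) := -\int \frac{\Phi(w)}{(l + i e\cdot w)^2}\,\D\mu(w) - i\int \frac{\D\Phi(w)\cdot e}{l + i e\cdot w}\,\D\mu(w),
\]
so the desired inequality becomes $|iP(n)\cdot H(n/|n|, l) - i| \geq \delta$ uniformly in $n \neq 0$ and $\Re(l) \geq \gamma$.

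Next I would reduce the problem to a compact region in $l$. The estimate used in the proof of Proposition \ref{gamma0finite} already gives $|H(e, l)| \leq M/\Re(l) + M/\Re(l)^2$ uniformly in $e$, so $|\Lambda_n(|n|l)| \leq 1/2$ as soon as $\Re(l) \geq R_1$ for some $R_1 = R_1(\gamma, M)$. In the imaginary direction, given $\varepsilon > 0$ pick $R_0$ with $\int_{|w|>R_0}(|\Phi| + |\D\Phi|)\,\D\mu < \varepsilon$; since $|w| \leq R_0$ forces $|l + i e\cdot w| \geq |\Im(l)| - R_0$, splitting the integral shows that $|H(e, l)| \to 0$ as $|\Im(l)| \to \infty$, uniformly in $e \in S^{d-1}$ and $\Re(l) \geq \gamma$. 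Hence outside some compact set $K \subset \{\Re(l) \geq \gamma\}$, one automatically has $|\Lambda_n(|n|l) - i| \geq 1/2$ for every $n \neq 0$.

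Suppose now, for contradiction, that the conclusion fails: there exist sequences $(n_k) \subset \Z^d \setminus \{0\}$ and $(l_k) \subset K$ such that $\phi_k(l_k) \to 0$, where $\phi_k(l) := \Lambda_{n_k}(|n_k|l) - i$. Extract subsequences so that $l_k \to l^* \in K$, $n_k/|n_k| \to e^* \in S^{d-1}$, and $P(n_k) \to P^*$ in $\mathcal{L}(E;\R)^{\C}$ (possible since this space is finite-dimensional and $|P(n)| \leq M$). If $(n_k)$ is bounded in $\Z^d$ it is eventually constant equal to some $n^*$, and continuity of $\Lambda_{n^*}$ gives $\Lambda_{n^*}(|n^*|l^*) = i$ with $\Re(|n^*|l^*) \geq \gamma|n^*| > \gamma_0 |n^*|$, contradicting the definition \eqref{defgamma0} of $\gamma_0$. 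Otherwise $|n_k| \to \infty$, and joint continuity of $H$ on $S^{d-1} \times \{\Re(l) > 0\}$ (dominated convergence, hence uniform continuity on compact sets) together with $P(n_k) \to P^*$ yields $\phi_k \to \phi^*$ uniformly on compact subsets of $\Omega := \{\Re(l) > \gamma_0\}$, where $\phi^*(l) := iP^* \cdot H(e^*, l) - i$. Each $\phi_k$ is holomorphic and nowhere zero on $\Omega$ by the definition of $\gamma_0$; Hurwitz's theorem therefore forces $\phi^*$, on the connected open set $\Omega$, to be either nowhere zero or identically zero. But $\phi^*(l^*) = 0$ rules out the first, while $\phi^*(l) \to -i$ as $\Re(l) \to \infty$ (the uniform bound above applied to $P^*$) rules out the second: contradiction.

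The main obstacle I expect is precisely this $|n_k| \to \infty$ case, for one cannot turn the hypothesis $\Re(\lambda_k)/|n_k| \geq \gamma > \gamma_0$ into a contradiction at the level of a single $n$: the limit operator $P^*$ need not coincide with any $P(n)$, $n \in \Z^d$, so a pointwise limit of the Penrose condition \eqref{penrosecondition} is not forbidden by the definition of $\gamma_0$. Hurwitz's theorem is the right tool because it converts the pointwise failure (one root of $\phi^*$ in $\Omega$) into the global statement that $\phi^* \equiv 0$ on the whole half-plane $\Omega$, which is immediately excluded by the decay of $\Lambda_n(|n|l)$ as $\Re(l) \to \infty$.
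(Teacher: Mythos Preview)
Your proof is correct and follows essentially the same strategy as the paper: rescale via $l=\lambda/|n|$, argue by contradiction, reduce to a compact region in $l$, extract convergent subsequences of $n_k/|n_k|$, $P(n_k)$ and $l_k$, and conclude with a normal-families argument. The only real difference is that the paper applies Rouch\'e's theorem (a zero of the limit $\phi^*$ forces nearby zeros of $\phi_k$, contradicting the definition of $\gamma_0$), whereas you apply the dual Hurwitz theorem (non-vanishing of each $\phi_k$ on $\{\Re(l)>\gamma_0\}$ forces $\phi^*\equiv 0$ or nowhere zero, both of which you rule out); your case split on whether $(n_k)$ is bounded is harmless but unnecessary, since the Hurwitz/Rouch\'e argument already covers the eventually-constant case.
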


\begin{proof}[Proof of Lemma \ref{fourierlemma}]
We fix $n \in \Z^d \backslash\{0\}$ (there is no evolution for $n = 0$, so the inequality is trivially true). To lighten the notations, we denote by $\alpha = (\alpha(t,w))$ and $\beta = (\beta(t,w))$ the functions that will play the roles of $(\hat{r}_n^w(t))$ and $(\hat{u}_n^w(t))$. More precisely, given $(\alpha_0, \beta_0) \in \boldsymbol{L}^{\infty}$, we look for solutions to 
\begin{equation}
\label{fouriereqalphabeta}
\left\{ \begin{gathered}
\partial_t \begin{bmatrix}
\alpha(t,w) \\
\beta(t,w)
\end{bmatrix}
+ i \begin{bmatrix}
n \cdot w & n  \\  
0 & n \cdot w \Id_{\R^d}
\end{bmatrix} \cdot \begin{bmatrix}
\alpha(t,w) \\
\beta(t,w)
\end{bmatrix} = I_n(\alpha(t), \beta(t))\begin{bmatrix}
0 \\
-in
\end{bmatrix},\\
\alpha(0) = \alpha_0 \quad \mbox{and} \quad \beta(0) = \beta_0,
\end{gathered} \right.
\end{equation}
with
\begin{equation}
\label{defIn}
I_n(\alpha(t), \beta(t)) := P(n) \cdot \int \left\{\Phi(w') \alpha(t, w') + \D \Phi(w')\cdot \beta(t, w') \right\} \D \mu(w').
\end{equation}
($\alpha(t)$ and $\beta(t)$ are notations for $\alpha(t, \bullet)$ and $\beta(t, \bullet)$ respectively.) 

It is easy to see that this equation generates a $C_0$ semigroup on $\boldsymbol{L}^{\infty}$. Indeed, call
\begin{gather*}
A_n(w) := -i \begin{bmatrix}
n \cdot w & n  \\  
0 & n \cdot w \Id_{\R^d}
\end{bmatrix},\\
\boldsymbol{A}_n : (\alpha, \beta) \in \mathcal{D}(\boldsymbol{A}_n) \mapsto A_n(w) \cdot\begin{bmatrix} \alpha(w) \\ \beta(w)\end{bmatrix},\\
\boldsymbol{B}_n : (\alpha, \beta) \in \boldsymbol{L}^{\infty} \mapsto  I_n(\alpha, \beta)\begin{bmatrix} 0 \\ -in \end{bmatrix}.
\end{gather*}
(The domain $\mathcal{D}(\boldsymbol{A}_n)$ is the set of couples $(\alpha, \beta)\in \boldsymbol{L}^{\infty}$ for which the formula in the definition of $\boldsymbol{A}_n$ provides an element of $\boldsymbol{L}^{\infty}$). Then, Equation \eqref{fouriereqalphabeta} can be reformulated as
\[
\partial_t (\alpha(t), \beta(t)) = \boldsymbol{A}_n \cdot (\alpha(t), \beta(t)) + \boldsymbol{B}_n \cdot (\alpha(t), \beta(t)).
\]
But on the one hand, $\boldsymbol{A}_n$ generates the $C_0$ semigroup $(e^{t \boldsymbol{A}_n})_{t \in \R_+}$ with for all $t \in \R_+$, for all $(\alpha, \beta) \in \boldsymbol{L}^{\infty}$ and for all $w \in \R^d$,
\begin{align*}
e^{t \boldsymbol{A}_n}\cdot (\alpha, \beta) (w) &= \exp(t A_n(w)) \cdot \begin{bmatrix} \alpha(w) \\ \beta(w)\end{bmatrix}\\
&= \exp(-i t n\cdot w) \begin{bmatrix} 1 & -it n \\ 0& \Id_{\R^d} \end{bmatrix} \cdot \begin{bmatrix}\alpha(w) \\ \beta(w)\end{bmatrix}\\
&=\exp(-i t n\cdot w) \begin{bmatrix} \alpha(w) - itn\cdot \beta(w) \\
\beta(w) \end{bmatrix}.
\end{align*}
Remark the following estimate of the operator norm of $e^{t\boldsymbol{A}_n}$:
\begin{equation}
\label{estimexptA}
\left\| e^{t \boldsymbol{A}_n} \right\| \leq 1 + t|n|.
\end{equation}
On the other hand, $\boldsymbol{B}_n$ is bounded on $\boldsymbol{L}^{\infty}$ and its operator norm satisfies
\begin{equation*}
\| \boldsymbol{B}_n \| \leq K |n|
\end{equation*}
where $K$ only depends on $M$.

Thus, by \cite[Chapter 3, Theorem 1.1]{pazy2012semigroups}, $\boldsymbol{A}_n + \boldsymbol{B}_n$ is the infinitesimal generator of a $C_0$ semigroup $(e^{t(\boldsymbol{A}_n + \boldsymbol{B}_n)})_{t \in \R_+}$ on $\boldsymbol{L}^{\infty}$, and taking a slightly bigger $K$, for all $t \geq 0$,
\[
\|e^{t(\boldsymbol{A}_n + \boldsymbol{B}_n)}\| \leq \exp(K|n|t).
\]
The continuity property stated in Lemma~\ref{fourierlemma} follows.
The aim is now to lower the constant $K$ down to any $\gamma> \gamma_0$ up to adding a multiplicative constant. 

We fix $(\alpha_0, \beta_0) \in \boldsymbol{L}^{\infty}$. We will compute the Laplace transform $p \mapsto H[p]$ of
\[
h: t \mapsto \Big(e^{t(\boldsymbol{A}_n + \boldsymbol{B}_n)} - e^{t \boldsymbol{A}_n}\Big)\cdot (\alpha_0, \beta_0) = \int_0^t e^{(t-s)\boldsymbol{A}_n} \cdot \boldsymbol{B}_n \cdot (\alpha(s), \beta(s)) \D s \in \boldsymbol{L}^{\infty}.
\] 
With the previous estimate, we can classically (see for example the proof of \cite[Chapter 3, Theorem 5.3]{pazy2012semigroups}) deduce that the Laplace transform $F$ of $(e^{t(\boldsymbol{A}_n + \boldsymbol{B}_n)})$ is its resolvent, namely for all $p \in \C$ with $\Re(p) > K|n|$,
\begin{gather}
\notag F[p] := \int_0^{+ \infty} e^{-pt} e^{t(\boldsymbol{A}_n+\boldsymbol{B}_n)}\cdot (\alpha_0, \beta_0) \D t \in \mathcal{D}(\boldsymbol{A}_n+\boldsymbol{B}_n) \Big(= \mathcal{D}(\boldsymbol{A}_n) \Big), \\
\label{resolventequationA+B} \mbox{and} \quad \Big(p\Id_{\boldsymbol{L}^{\infty}} - (\boldsymbol{A}_n + \boldsymbol{B}_n) \Big)\cdot F[p] = (\alpha_0, \beta_0).
\end{gather}
In the same way, thanks to \eqref{estimexptA}, for all $p \in \C$ with $\Re(p) > 0$ (remark that if $\omega >0$, then $1 + t|n| \leq (1 + |n| / \omega ) e^{\omega t}$),
\begin{gather}
\notag G[p] := \int_0^{+ \infty} e^{-pt} e^{t \boldsymbol{A}_n}\cdot (\alpha_0, \beta_0) \D t \in \mathcal{D}(\boldsymbol{A}_n), \\
\label{resolventequationA} \mbox{and} \quad \Big(p\Id_{\boldsymbol{L}^{\infty}} - \boldsymbol{A}_n \Big)G[p] = (\alpha_0, \beta_0).
\end{gather}
But then, solving the resolvent equations \eqref{resolventequationA} and \eqref{resolventequationA+B}, we easily find that for all $p \in \C$ with $\Re(p) > K|n| > 1$,
\begin{gather}
\label{formulaG} G[p](w) = \begin{bmatrix}\displaystyle{
\frac{\alpha_0(w)}{p + in\cdot w} - \frac{in \cdot \beta_0(w)}{(p + in\cdot w)^2}}\\
\displaystyle{\frac{\beta_0(w)}{p + in\cdot w}}
\end{bmatrix},\\
\label{formulaF} F[p](w) = G[p](w) + I_n(F[p]) \begin{bmatrix} \displaystyle{\frac{-|n|^2 }{(p + in\cdot w)^2}} \\ \displaystyle{\frac{-in}{p + in\cdot w} }\end{bmatrix}. 
\end{gather}
Applying $i I_n$ to \eqref{formulaF}, we get by the definition of $I_n$ in \eqref{defIn} and by the definition of $\Lambda_n(p)$ in \eqref{defLambda}:
\[
\Big( i - \Lambda_n(p) \Big) I_n(F[p]) = i I_n(G[p]).
\]
But as soon as $\Re(p) > \gamma_0 |n|$, one must have $\Lambda_n(p) \neq i$, and
\[
I_n(F[p]) = \frac{i}{i - \Lambda_n(p)}I_n(G[p])
\]
Finally, we get (at least when $\Re(p) > K|n|$)
\[
H[p](w) = F[p](w) - G[p](w) =  \frac{i}{i - \Lambda_n(p)}I_n(G[p]) \begin{bmatrix} \displaystyle{\frac{-|n|^2}{(p + in \cdot w)^2}} \\ \displaystyle{\frac{-in}{p + in\cdot w} }\end{bmatrix}.
\]
But this expression is well defined and analytic in $p$ on $\{ p \in \C \, | \,\Re(p) > \gamma_0 |n| \}$. We keep the notation $H[p]$ in this domain.

 In addition, if $\gamma > \gamma_0$, we have the following estimates.
\begin{itemize}
\item By Proposition \ref{propfarfromi}, there is $\delta>0$ only depending on $\gamma$ such that for all $n \in \Z^d$, for all $p$ with $\Re(p) \geq \gamma |n|$,
\[
|\Lambda_n(p) -i| \geq \delta.
\]
\item Now we give an estimate for $H[p](w)$ when $\Re(p) \geq \gamma |n|$ and $w \in \R^d$. We just use the previous consideration, the fact that when $\Re(p) \geq \gamma |n|$, then $|p + in \cdot w| \geq \gamma |n|$ and the formulae for $I_n$, \eqref{defIn} and for $G[p](w)$, \eqref{formulaG}. In the following computation, $\lesssim$ means "lower than, up to a multiplicative constant which only depends on $M$ and $\gamma$". We have:
\begin{align}
\notag |H[p](w)| &= \left|\frac{i}{i - \Lambda_n(p)}\right| \times |I_n(G[p])| \times \left| \begin{bmatrix} \displaystyle{\frac{-|n|^2}{(p + in \cdot w)^2}} \\ \displaystyle{\frac{-in}{p + in\cdot w} }\end{bmatrix} \right| \\
\notag &\lesssim |I_n(G[p])| \times \frac{|n|}{|p + i n \cdot w|}\\
\notag &= \frac{|n|}{|p + in\cdot w|} \bigg| \int \bigg\{ \Phi(w') \left( \frac{\alpha_0(w')}{p + in\cdot w'} - \frac{in \cdot \beta_0(w')}{(p + in\cdot w')^2} \right)\\
\notag &\hspace{150pt} + \frac{\D \Phi(w') \cdot \beta_0(w')}{p + in\cdot w'} \bigg\} \D \mu (w') \bigg| \\
\label{estimH}&\lesssim \frac{|n|}{|p + in \cdot w|}\hspace{-3pt} \left( \int \hspace{-3pt} \frac{|\Phi(w')| + |\! \D \Phi(w')|}{|p + in\cdot w'|} \D \mu(w') \right) |(\alpha_0, \beta_0)|_{\infty}.
\end{align}
\item There exists $C$ only depending on $M$ and $\gamma$ (and not depending on $w$) such that for all $\gamma \geq \gamma_0$ and for all $w \in \R^d$,
 \begin{equation}
 \label{estimlaplacetransform}
 \frac{1}{2\pi}\hspace{-3pt}\int_{-\infty}^{+\infty} \hspace{-6pt}\int \hspace{-3pt}\frac{|\Phi(w')| + |\!\D \Phi(w')|}{\Big|\gamma |n| + i(q+n\cdot w)\Big|\Big|\gamma|n| + i(q+n\cdot w')\Big|} \D \mu(w') \D q \leq \frac{C}{|n|}.
\end{equation}
This is an easy consequence of the explicit computation
\begin{align*}
\int_{- \infty}^{+\infty} &\frac{\D q}{\Big|\gamma |n| + i(q+n\cdot w)\Big|\Big|\gamma|n| + i(q+n\cdot w')\Big|}\\
& \leq 2\int_{-\infty}^{+ \infty} \frac{\D q}{\Big(\gamma |n| + |q+ n\cdot w|\Big)\Big(\gamma |n| + |q+ n\cdot w'|\Big)}\\
&= \frac{8}{\gamma |n|} \frac{1 + Q}{2 + Q} \frac{\log(1 + Q)}{Q} \mbox{ with } Q = \frac{|n \cdot w - n \cdot w'|}{\gamma|n|}\\
&\leq \frac{8}{\gamma |n|}.
\end{align*}
\end{itemize}

As a consequence, gathering \eqref{estimH} and \eqref{estimlaplacetransform}, we get that for all $w \in \R^d$,
\begin{equation}
\label{boundedinverselaplacetransform}
\int_{\gamma |n| - i\infty}^{\gamma |n| + i \infty} |H[p](w)| \D p = \int_{-\infty}^{+\infty} \Big|H\big[\gamma |n| + iq\big](w)\Big| \D q \leq C |(\alpha_0, \beta_0)|_{\infty},
\end{equation}
with $C$ only depending on $M$ and $\gamma$. Therefore, the abscissa of convergence of $h$ is lower or equal to $\gamma_0|n|$, and the inverse Laplace transform formula applies, that is for all $\gamma > \gamma_0$ and $t \geq 0$
\begin{align*}
h(t) &= \frac{1}{2 \pi i} \int_{\gamma |n| - i\infty}^{\gamma |n| + i \infty} e^{p t}H[p] \D p\\
&=\frac{e^{\gamma |n| t}}{2 \pi i} \int_{- \infty}^{+ \infty} e^{iqt} H\big[\gamma |n| + iq\big] \D q,
\end{align*}
and by \eqref{boundedinverselaplacetransform},
\begin{align}
\notag |h(t)|_{\infty} &\leq \frac{e^{\gamma |n| t}}{2 \pi} \left| \int_{-\infty}^{+\infty} \Big|H\big[\gamma |n| + iq\big]\Big| \D q \right|_{\infty} \\
\label{eq:estim_h} &\leq C e^{\gamma |n|t}|(\alpha_0, \beta_0)|_{\infty},
\end{align}
where $C$ only depends on $M$ and $\gamma$. But
\begin{equation*}
(\alpha(t), \beta(t)) = e^{t (\boldsymbol{A}_n + \boldsymbol{B}_n)}(\alpha_0, \beta_0) = e^{t \boldsymbol{A}_n}(\alpha_0, \beta_0) + h(t).
\end{equation*}
Hence, we get the result by gathering \eqref{estimexptA} and \eqref{eq:estim_h}
\end{proof}
We now prove Proposition \ref{propfarfromi}.
\begin{proof}[Proof of Proposition \ref{propfarfromi}]
Defining the variable $\xi$ by the formula, $\lambda = |n| \xi$, we define for all $n \in \Z^d\backslash \{0\}$, for all $\xi$ with $\Re(\xi)>0$,
\[
 F_n(\xi) := \Lambda_n (|n| \xi) =-i P(n) \cdot \int \left\{ \frac{\Phi(w)}{(\xi + iu_n\cdot w)^2} + i \frac{\D \Phi(w) \cdot u_n}{\xi + iu_n\cdot w} \right\} \D \mu(w),
\]
where $u_n$ stands for the vector of the sphere $n / |n| \in \mathbb{S}^{d-1}$. All these functions are holomorphic on the half-plane
\[
\C_+^* := \{ \xi \in \C \mbox{ such that } \Re(\xi)>0 \}. 
\]
We know that for all $\xi \in \C$ satisfying $\Re(\xi) > \gamma_0$, for all $n\in \Z^d$, $F_n(\xi) \neq i$, and the goal is to prove that for all $\gamma > \gamma_0$, there exists $\delta >0$ such that if $\xi \in \C$ satisfies $\Re(\xi) \geq \gamma$, then $|F_n(\xi) - i|\geq \delta$.

By contradiction, if the result does not hold, we can find $\gamma > \gamma_0$, $(n_k)_{k \in \N} \in (\Z^d)^{\N}$ and $(\xi_k)_{k \in \N} \in \C^{\N}$ with for all $k \in \N$, $\Re(\xi_k) \geq \gamma$ such that
\[
\lim_{k \to + \infty} F_{n_k}(\xi_k) = i.
\] 
We show in several steps that this leads to a contradiction.\\
\underline{Step one:} $(\xi_k)_{k \in \N}$ is bounded.

For all $w \in \R^d$ and all $\xi = \alpha + i \beta \in \C$ with $\Re(\xi) = \alpha \geq \gamma$, we define the nonnegative function
\[
H(\xi,w) :=\frac{|\Phi(w)|}{\alpha^2 + (|\beta| - |w|)^2} + \frac{|\!\D\Phi(w)|}{\sqrt{\alpha^2 + (|\beta| - |w|)^2}}.
\]
On the one hand, for all $w \in \R^d$,
\[
H(\xi, w) \underset{|\xi| \to + \infty}{\longrightarrow} 0,
\]
and on the other hand for all $\xi$ with $\Re(\xi) \geq \gamma$ and $w \in \R^d$,
\[
H(\xi, w) \leq \frac{|\Phi(w)|}{\gamma^2} + \frac{|\!\D \Phi(w)|}{\gamma} \in L^1(\mu).
\]
Hence, the dominated convergence theorem applies and
\[
\int H(\xi, w) \D \mu(w) \underset{\substack{|\xi| \to + \infty \\ \Re(\xi) \geq \gamma}}{\longrightarrow} 0.
\]
We conclude step one remarking that because of \eqref{assumptionP}, for all $n \in \Z^d$ and $\xi \in \C_+^*$,
\begin{equation}
\label{FleqH}
|F_{n}(\xi)| \leq M \int H(\xi, w) \D \mu(w).
\end{equation}
So as
\[
|F_{n_k}(\xi_k)| \underset{k \to + \infty}{\longrightarrow} 1, 
\]
$(\xi_k)$ must be bounded.\\
\underline{Step two:} convergence to a non constant holomorphic function.

Up to an extraction, we can suppose that as $k$ tends to $+\infty$, $\xi_k \to \xi_{\infty}$ with $\Re(\xi_{\infty}) \geq \gamma$, $u_{n_k} \to u \in \mathbb{S}^{d-1}$ and $P(n_k) \to P \in \mathcal{L}(E^{\mathbb{C}}; \C)$ (with $\| P \| \leq M$, thanks to \eqref{assumptionP}). Then, by the dominated convergence theorem, $(F_{n_k})_{k \in \N}$ converges pointwise to $F$ defined for all $\xi \in \C_+^*$ by
\[
 F(\xi) := -i P \cdot \int \left\{ \frac{\Phi(w)}{(\xi + iu\cdot w)^2} + i \frac{\D \Phi(w) \cdot u}{\xi + iu\cdot w} \right\} \D \mu(w).
\]
Furthermore, because of Montel's theorem, this convergence is locally uniform on $\C_+^*$. In particular,
\[
F(\xi_{\infty}) = \lim_{k \to + \infty} F_{n_k}(\xi_k) = i.
\]
Moreover, because of \eqref{FleqH}, 
\[
F(\xi) \underset{\substack{|\xi| \to + \infty \\ \Re(\xi) \geq \gamma}}{\longrightarrow} 0.
\]
So $F$ cannot be constant.
\\ \underline{Step three:} conclusion applying Rouch\'e's theorem. 

If $\xi \in \C$ and $r>0$, we denote by $\mathcal{D}(\xi, r)$ the closed disk centered at $\xi$ and of radius $r$, and $\mathcal{C}(\xi, r) = \partial \mathcal{D}(\xi, r)$ the circle centered at $\xi$ and of radius $r$. Chose $r>0$ such that:
\begin{itemize}
\item for all $\xi \in \mathcal{D}(\xi_{\infty}, r)$, $\Re(\xi) > \gamma_0$,
\item the only zero of $F - i$ in $\mathcal{D}(\xi_{\infty}, r)$ is $\xi_{\infty}$.
\end{itemize}
Call 
\[
a := \inf_{\xi \in \mathcal{C}(\xi_{\infty}, r)} |F(\xi)|.
\]
If $k$ is sufficiently large because of the locally uniform convergence of $(F_{n_k})$ toward $F$, 
\[
\sup_{\xi \in \mathcal{C}(\xi_{\infty}, r)} |F_{n_k}(\xi) - F(\xi)| < a,
\]
so Rouch\'e's theorem applies. For such a $k$, $F_{n_k}-i$ and $F-i$ have the same number of zeroes (counted with multiplicity) on $\mathcal{D}(\xi_{\infty}, r)$. So $F_{n_k}-i$ cancels at least once on $\mathcal{D}(\xi_{\infty}, r)$, and so there exists $\xi$ with $\Re(\xi) > \gamma_0$ such that $F_{n_k}(\xi) = i$, which contradicts the definition of $\gamma_0$.
\end{proof}
\section{Nonlinear instability}
\label{nonlinearinstability}
\subsection{Statement of the main result}
The purpose of this subsection is to prove the existence of solutions to the nonlinear system \eqref{abstracteq}-\eqref{abstractforce} for any initial data in the neighbourhood of an unstable stationary solution. The nonlinear system is viewed as a perturbation of the linearized system \eqref{abstractlineq} for which Theorem \ref{sharpestimate} gives the existence of solutions.

 Fix $\delta_0>0$, consider $\gamma_0$ as defined in \eqref{defgamma0} and suppose $\gamma_0>0$. The initial condition will be taken of the form
 \[
 (\rhorho_0, \vv_0) = (\boldsymbol{1} + \rr_0, \boldsymbol{w} + \uu_0),
 \]
with $(\rr_0, \uu_0) \in \boldsymbol{L}_0$ and $(\nabla \rr_0, \nabla \uu_0) \in X_{\delta_0}$ for some $\delta_0>0$. 
 
We look for solutions of the form
\begin{equation}
\label{linear+rest}
\rho^w(t,x) = 1 + r^w(t,x) + \sigma^w(t,x), \quad v^w(t,x) = w + u^w(t,x) + \xi^w(t,x)
\end{equation}
where here and in the whole section, $(\rr, \uu)$ is a solution to the linear problem: $(\rr(t), \uu(t)) = S_t(\rr_0, \uu_0)$, and where $(\sigsig(0), \xixi(0)) = 0$. 

Injecting this ansatz in \eqref{abstracteq}-\eqref{abstractforce}, we find that $(\sigsig, \xixi)$ must be solution to the following system (where we omit the dependence of each function in $(t,x)$ to gain space, and where the equations must hold for all $w \in \R^d$)
\begin{equation}
\label{abstractcontractiveeq}
\left\{\hspace{-2pt} \begin{gathered}
 \partial_t \sigma^{w} + w\cdot \nabla \sigma^w + \Div(\xi^w) = -\Div\Big((r^w + \sigma^w)(u^w + \xi^w)\Big) , \\
\partial_t \xi^{w} \! + \! (w \cdot \nabla)\xi^{w}\! =\! -\nabla V[\sigsig, \xixi]\! - \! \nabla W_{\rr,\uu}[\sigsig, \xixi] \! - \! \Big[(  u^w \! + \! \xi^w)\! \cdot \! \nabla\Big]\! (  u^w \! + \! \xi^w)  ,\\
V[\sigsig, \xixi] := A\left[  \int \left\{ \Phi(w) \sigma^w + \D \Phi(w)\cdot \xi^w \right\} \D \mu(w) \right], \\
\begin{gathered} W_{\rr, \uu}[\sigsig, \xixi]:= A\left[\int \Big\{ \Phi(w +   u^w + \xi^w) - \Phi(w)\Big\} \big(  r^w + \sigma^w \big) \D \mu(w)\right]\\
\hspace{13pt}+ A\left[\int \Big\{ \Phi(w +   u^w + \xi^w) - \Phi(w) - \D \Phi(w)\cdot\big(   u^w + \xi^w \big)\Big\} \D \mu(w)\right]\hspace{-3pt},
\end{gathered}\\
 \sigma^{w}|_{t=0} = 0 \mbox{ and } \xi^{w}|_{t=0} = 0.
\end{gathered} \right.
\end{equation}
As expected, we recognize the linear system \eqref{abstractlineq} plus terms that are at least quadratic. So we give a Duhamel formulation of this system which is clearly equivalent at the level of regularity at which we work
\begin{equation}
\label{duhamelabstracteq}
\begin{bmatrix}
\sigsig(t) \\ \xixi(t)
\end{bmatrix}
\!\! = \! \!\int_0^t\!\! S_{t-s}\!\! \begin{bmatrix} -\Div\Big((  \rr(s) + \sigsig(s))(  \uu(s) + \xixi(s))\Big) \\
\!-\nabla W_{\rr, \uu}[\sigsig(s), \xixi(s)] \! - \! \Big[(  \uu(s) + \xixi(s))\! \cdot \! \nabla\Big](  \uu(s) + \xixi(s)) \! \end{bmatrix}\!\! \D s.
\end{equation}
The derivatives are taken pointwise in $w$: for instance, the notation $\Div (\vv)$ stands for $(\Div v^w)_{w \in \R^d}$. We have written the couples of density and velocity fields in column to gain space. 

We will prove the following theorem.
\begin{Thm}
\label{existenceofsolutions}
Suppose that $\mu$ is unstable as defined in Definition \ref{def:unstability}, so that $\gamma_0$ defined in \eqref{defgamma0} is positive. Take $\Gamma > \gamma_0$.

Then there exists $\eps_0>0$ only depending on $\Gamma$, $r_0$ and $M$ (the last two appearing in \eqref{intPhi}, \eqref{intdPhi}, \eqref{intdPhiseries}, \eqref{intd2Phiseries} and \eqref{assumptionP}), such that for all $(\rr_0, \uu_0) \in \boldsymbol{L}_0$ (defined in definition \eqref{defL0}), if there is $\delta_0>0$ such that
\begin{equation}
\label{initialdatasmall}
\|\DD \rr_0, \DD \uu_0 \|_{\delta_0} \leq \eps_0,
\end{equation}
then:
\begin{itemize}
\item  \eqref{duhamelabstracteq} admits a solution $(\sigsig(t), \xixi(t)) \in \boldsymbol{L}_0$ for $t \in [0, \delta_0 / \Gamma ]$,
\item for all $\delta < \delta_0$, $(\sigsig, \xixi)$ is continuous from $[0, (\delta_0 - \delta)/ \Gamma]$ to $\XX_{\delta}$,
\item there holds:
\begin{equation}
\label{estimsigmaxi}
\sup_{t \leq \delta_0/\Gamma }\| \sigsig(t), \xixi(t) \|_{\delta_0 - \Gamma t} \leq \frac{\| \DD \rr_0, \DD \uu_0 \|_{\delta_0} ^2}{\eps_0} .
\end{equation}
\end{itemize}

Moreover, this solution is unique in the class of analytic solutions: if $(\tilde\sigsig, \tilde\xixi)$ is a solution to \eqref{duhamelabstracteq} which is continuous from $[0,T]$ to $\XX_{\delta}$ for some $T\leq \delta_0 / \Gamma$ and $\delta>0$, then for all $t \in [0,T]$, $(\tilde\sigsig(t), \tilde\xixi(t)) = (\sigsig(t), \xixi(t))$.

Consequently, for such $(\rr_0, \uu_0)$, $\delta_0$ and $\Gamma$, equations \eqref{abstracteq}-\eqref{abstractforce} admit a unique analytic solution $(\rhorho(t), \vv(t))$ of the form \eqref{linear+rest} with
\[
\rhorho_0 = 1 + \rr_0 \qquad \mbox{and} \qquad \vv_0 = w + \uu_0,
\]
and we can estimate thanks to \eqref{estimsigmaxi} the distance between the linear solution and the nonlinear one.

Finally, $(\rhorho(t), \vv(t))$  stays real if $(\rhorho_0, \vv_0)$ is real, and $\rhorho(t)$ stays nonnegative if $\rhorho_0$ is nonnegative.
\end{Thm}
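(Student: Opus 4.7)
The plan is to set up a fixed point argument for the Duhamel reformulation \eqref{duhamelabstracteq}, viewed as the equation $(\sigsig, \xixi) = \mathcal{T}[\sigsig, \xixi]$ where $\mathcal{T}$ is the right-hand side of \eqref{duhamelabstracteq}. Fix an auxiliary rate $\gamma$ with $\gamma_0 < \gamma < \Gamma$ so that Theorem \ref{sharpestimate} yields, for any $0 \leq s \leq t \leq \delta_0/\Gamma$,
\[
\| S_{t-s} F \|_{\delta_0 - \Gamma t} \leq C \| F \|_{\delta_0 - \Gamma t + \gamma(t-s)} = C\| F \|_{(\delta_0 - \Gamma s) - (\Gamma - \gamma)(t-s)}.
\]
The width $(\Gamma - \gamma)(t-s)$ is precisely the budget we have available to pay for the one-derivative loss inherent in the quadratic source of \eqref{duhamelabstracteq}.

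The ingredients to control this source are those collected in Subsection \ref{propertiesanalyticnorms}: the Banach algebra inequality $|fg|_\delta \leq |f|_\delta |g|_\delta$ and its family-norm analogue for \eqref{defdoublenorm}; the one-derivative loss $|\nabla f|_{\delta'} \leq C(\delta - \delta')^{-1} |f|_\delta$ for $\delta' < \delta$; and a composition estimate for $\Phi$ based on its power series \eqref{defPhi}. The latter is where \eqref{intdPhiseries} and \eqref{intd2Phiseries} are crucial: provided $\| \uu + \xixi \|_\delta \leq r_0$, they yield
\[
\left\| \int \Big( \Phi(w + u^w + \xi^w) - \Phi(w) \Big) \D \mu(w) \right\|_\delta \leq M \, \|\uu + \xixi\|_\delta
\]
and an analogous $O(\|\uu + \xixi\|_\delta^2)$ bound for the second-order remainder $\Phi(w + \cdot) - \Phi(w) - \D\Phi(w)\cdot(\cdot)$. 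Combined with the uniform bound \eqref{assumptionP} on $P$, this displays $W_{\rr, \uu}[\sigsig, \xixi]$ and the whole map $\mathcal{T}$ as being at least quadratic in $(\rr, \uu, \sigsig, \xixi)$. The main obstacle is that the $\Div$ and $\nabla$ appearing in $\mathcal{T}$ each cost one spatial derivative, producing an integrand of order $(t-s)^{-1}$ inside the Duhamel integral, which is not integrable; this is the standard Cauchy--Kovalevskaia difficulty. It is bypassed \emph{à la} Caflisch \cite{caflisch1990simplified} by working with a double-scale norm of the form
\[
\VERT (\sigsig, \xixi) \VERT_T := \sup_{0 \leq t \leq T} \; \sup_{0 < \delta < \delta_0 - \Gamma t} \; (\delta_0 - \Gamma t - \delta) \, \| (\sigsig(t), \xixi(t)) \|_\delta,
\]
and optimizing the intermediate radius as a function of $s$ inside the Duhamel integral; the singular factor is then compensated by the extra width $(\delta_0 - \Gamma t - \delta)$, turning the integrand into an integrable power of $(t-s)$ with a constant depending only on $\Gamma - \gamma$.

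Once these estimates are in place, choosing $\eps_0$ small enough (in terms of $\Gamma$, $r_0$, $M$ only) ensures that $\mathcal{T}$ sends the ball of radius $\|\DD \rr_0, \DD \uu_0\|_{\delta_0}^2 / \eps_0$ into itself in the double-scale norm and acts there as a strict contraction, yielding a unique fixed point together with the bound \eqref{estimsigmaxi}. Continuity of $(\sigsig, \xixi)$ from $[0, (\delta_0 - \delta)/\Gamma]$ to $\XX_\delta$ is inherited from the continuity part of Theorem \ref{sharpestimate} via dominated convergence inside the Duhamel integral, and uniqueness in the class of continuous analytic solutions is a standard Gronwall argument performed at a slightly shrunk radius. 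Preservation of the $\boldsymbol{L}_0$ structure is automatic because the first source component is a divergence (hence has zero mean) and the second is a gradient (for $\uu + \xixi$ a gradient, $[(\uu + \xixi)\cdot\nabla](\uu + \xixi) = \nabla(|\uu + \xixi|^2/2)$), combined with the last item of Theorem \ref{sharpestimate}. Finally, going back from $(\sigsig, \xixi)$ to $(\rhorho, \vv)$ through \eqref{linear+rest} produces the announced analytic solution of \eqref{abstracteq}--\eqref{abstractforce}; reality is propagated by applying uniqueness to the complex conjugate, and nonnegativity of $\rhorho$ follows from the method of characteristics applied to $\partial_t \rho^w + \Div(\rho^w v^w) = 0$ along the Lipschitz-in-$x$ flow of $v^w$, yielding $\rho^w(t,\cdot) \geq 0$ whenever $\rho^w_0 \geq 0$.
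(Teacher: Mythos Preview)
Your overall strategy---a Caflisch-style fixed point on the Duhamel formulation, using the semigroup bound of Theorem~\ref{sharpestimate} and the algebra/derivative properties of Subsection~\ref{propertiesanalyticnorms}---matches the paper's. However, there is a genuine gap concerning the headline feature of the theorem: that $\eps_0$ depends only on $\Gamma$, $r_0$, $M$ and \emph{not} on $\delta_0$.

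With your norm $\VERT(\sigsig,\xixi)\VERT_T=\sup_{t,\delta}(\delta_0-\Gamma t-\delta)\|(\sigsig(t),\xixi(t))\|_\delta$ and a single auxiliary rate $\gamma$, the very first Duhamel step already leaks a factor of $\delta_0$. Indeed, for $\pp_1(t)=\int_0^t S_{t-s}\mathcal{F}_\eta(s,0)\,\D s$ one has $\|S_{t-s}\mathcal{F}_\eta(s,0)\|_{\delta_0-\Gamma t}\leq C\|\mathcal{F}_\eta(s,0)\|_{\delta_0-\Gamma t+\gamma(t-s)}$, and since $\mathcal{F}_\eta(s,0)$ is merely bounded by $K\eta^2$ in the relevant analytic norms (there is no derivative left to trade against the extra width), monotonicity of $\|\cdot\|_\delta$ gives only $\|\pp_1(t)\|_{\delta_0-\Gamma t}\lesssim\eta^2 t\leq\eta^2\delta_0/\Gamma$. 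The same $\delta_0$ factor then contaminates the contraction constant, forcing $\eps_0\lesssim 1/\delta_0$. This would destroy the applications in Section~\ref{consequences}, where one takes $\delta_0=\Gamma T_k\to\infty$ in the Vlasov--Poisson case. Note also that your norm gives no control at $\delta=\delta_0-\Gamma t$, so \eqref{estimsigmaxi} does not follow from it.

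The paper resolves this by (i) using a two-part norm, $\VERT\pp\VERT=\sup_t\|\pp(t)\|_{\delta_0-\Gamma t}+\sqrt{\delta_0}\sup_{t,\delta}(\delta_0-\delta-\Gamma t)^{1/2}\|\DD\pp(t)\|_\delta$, with the exponent $1/2$ and the prefactor $\sqrt{\delta_0}$ calibrated precisely so that all constants become $\delta_0$-free; and (ii) exploiting the $\boldsymbol{L}_0$ structure quantitatively via Lemma~\ref{lemdeltatodelta'}, which turns the surplus width $(\gamma_2-\gamma_1)s$ into an exponential decay $e^{-(\gamma_2-\gamma_1)s}$ inside the time integral, making it converge independently of $T=\delta_0/\Gamma$. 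In your sketch you invoke $\boldsymbol{L}_0$ only for structure preservation; the paper uses it as the key analytical ingredient for the estimates themselves, and also introduces two intermediate rates $\gamma_1<\gamma_2$ (one for the linear solution $(\rr,\uu)$, one for $S_{t-s}$) rather than a single $\gamma$. Your outline is salvageable, but these elements are not optional refinements---without them the stated $\delta_0$-independence of $\eps_0$ fails.
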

\begin{Rem}\begin{itemize}
\item This is an existence result in a neighbourhood of the stationary solution: for any initial data (in $\boldsymbol{L}_0$) sufficiently close to the stationary solution, we are able to find a local in time solution to \eqref{abstracteq}-\eqref{abstractforce}. In fact, we could even drop the condition $(\rr_0, \uu_0) \in \boldsymbol{L}_0$ if we were not interested in finding the best time of existence that is possible with this method.
\item If $(\rr_0, \uu_0) \in \boldsymbol{L}_0$ and $\delta_0>0$, then
\begin{equation}
\label{ruleqDrDu}
\| \rr_0, \uu_0 \| \leq \| \DD \rr_0, \DD \uu_0 \|.
\end{equation}
In particular, \eqref{initialdatasmall} implies $\| \rr_0, \uu_0 \| \leq \eps_0$. This remark will be useful in the following.
\item In the Vlasov-Poisson case, a famous result by Loeper \cite[Theorem 1.2]{loeper2006uniqueness} asserts that there is at most one distributional solution (in space and velocity) with bounded macroscopic density. Compared to this result, the uniqueness part of our theorem is very weak: \emph{one reformulation} of the problem (the multiphasic system) admits a unique \emph{analytic} solution. However, in the other cases, the uniqueness of solutions with low regularity in the velocity variable is an open question.
\end{itemize}
\end{Rem}

The proof is based on a fixed point argument in the Duhamel formulation \eqref{duhamelabstracteq}. To perform it, we will first list a few standard (but useful) properties of the family of norms $(\|\bullet\|_{\delta})_{\delta \geq 0}$ defined in \eqref{defdoublenorm}. Secondly, we will use these properties to derive estimates on the source term in Equation \eqref{duhamelabstracteq}. We will then show a version of the Cauchy-Kovalevskaia theorem, very close to the one due to Caflisch in \cite{caflisch1990simplified}. But we will have to take into account the loss of regularity due to the presence of $S_{t-s}$ in \eqref{duhamelabstracteq} (as already done in \cite{grenier1996oscillations, han16}). Also, for the first time to our knowledge, our proof allows to get $\eps_0$ independent of $\delta_0$. It is interesting since if $(\rr(t), \uu(t))$ is an exponential growing mode of frequency $n \in \Z^d$ and $c \in \R$, as a function of $\delta$,
\[
\big\|c\DD \rr_0 , c\DD \uu_0\big\|_{\delta} \propto c|n| \exp(|n|\delta).
\]
Therefore, we have a precise control on the best $\delta$ (and corresponding $T$) we can take for a given $c$: once $\Gamma$ is fixed, $\eps_0$ is fixed and we can get solutions starting from $c (\rr_0, \uu_0)$ up to time $T=\delta/\Gamma$ as soon as $c |n|\exp(|n|\delta) \lesssim \eps_0$, that is
\[
T \propto -\frac{\log c}{|n|}.
\]
(Usually, $\eps_0$ is a decreasing function of $\delta$, and consequently, the condition of existence $c \exp(|n|\delta) \lesssim \eps_0(\delta)$ is stronger.) Therefore, \eqref{initialdatasmall} can be seen as a balance between the size of the initial condition and the time of existence given by the theorem. This is useful when we want to get large times of existence, as we will need in the Vlasov-Poisson case. To get this result, we will have to take advantage of the fact that a solution starting from $\boldsymbol{L}_0$ stays in $\boldsymbol{L}_0$. We will then be able to use Lemma \ref{lemdeltatodelta'} below. We will finally apply this theorem to the proof of Theorem \ref{existenceofsolutions} thanks to the estimates that were previously derived.

\subsection{Properties of the analytic norms}
\label{propertiesanalyticnorms}
The following properties are basic tools when working with analytic regularity (at least the first ones), and most of the proofs can for example be found in \cite{grenier1996oscillations}. However, we will recall them in Appendix \ref{appendixproof} because we have to obtain uniform estimates with respect to the variable $w$. The last lemma is more original and delicate. We have decided to postpone the proofs to the appendix to lighten the reading.

 First, we will introduce a notation to bound all the first derivatives of a function in $X_{\delta}$ or $\XX_{\delta}$.

\begin{Def}
Let $f \in X_{\delta}$ and $\GG \in \XX_{\delta}$ be written for all $x \in \T^d$
\[
\sum_{n \in \Z^d} \hat{f}_n \exp(i n\cdot x) \quad \mbox{and} \quad \sum_{n \in \Z^d} \hat{g}_n(w) \exp(i n\cdot x),
\]
with for all $n \in \Z^d$, $\hat{g}_n \in L^{\infty}$. We define for all $\delta >0$
\begin{equation*}
\begin{aligned}
|\DD f|_{\delta} &:= \sum_{n \in \Z^d} |n| |\hat{f}_n| \exp(\delta |n|),\\
\| \DD \GG \|_{\delta} &:= \sum_{n \in \Z^d} |n| |\hat{g}_n|_{\infty} \exp(\delta |n|).
\end{aligned} 
\end{equation*}
Remark that this definition makes sense whatever the normed vector space in which $f$ and $\GG$ take their values. That is the reason for this definition.
\end{Def}

We move on to the estimates. We write the following propositions for families of functions in $\XX_{\delta}$, but the results are obviously still true for functions in $X_{\delta}$. 

The first proposition asserts that for all $\delta$, $\| \bullet \|_{\delta}$ is a norm of algebra.
\begin{Prop}
\label{product}
Take $\delta >0$ and let $\FF = (f^w)_{w \in \R^d}$ and $\GG = (g^w)_{w \in \R^d}$ be two families functions in $\XX_{\delta}$ with $\FF$ $\R$-valued. Then $\FF \GG := (f^w g^w)_{w \in \R^d} $ is still in $\XX_{\delta}$, and
\[
\| \FF \GG \|_{\delta} \leq \| \FF \|_{\delta} \| \GG \|_{\delta}.
\]
\end{Prop}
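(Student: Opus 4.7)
The plan is to reduce the product estimate to the standard convolution bound on $\ell^1$ spaces, exploiting the subadditivity $|n|\leq |m| + |n-m|$ of the exponential weight and the submultiplicativity of the $L^\infty(\mu)$ norm in the velocity variable.

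First I would expand $\FF$ and $\GG$ in Fourier series in $x$: write $f^w(x) = \sum_{n}\hat{f}_n(w)e^{in\cdot x}$ and $g^w(x) = \sum_{n}\hat{g}_n(w)e^{in\cdot x}$, where by assumption each $\hat{f}_n,\hat{g}_n \in L^\infty(\R^d,\mu)$. The product $f^w g^w$ has, for each fixed $w$, the formal Fourier expansion with coefficients $(\hat{f}\ast\hat{g})_n(w) := \sum_{m\in\Z^d}\hat{f}_m(w)\hat{g}_{n-m}(w)$, and this expansion converges absolutely (in $x$) as soon as the associated weighted sum is finite.

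Next, since multiplication is pointwise in $w$, for $\mu$-a.e.\ $w$ and every $m,n$ one has $|\hat{f}_m(w)\hat{g}_{n-m}(w)| \leq |\hat{f}_m|_\infty|\hat{g}_{n-m}|_\infty$, so taking absolute values and then $L^\infty(\mu)$ norms in $w$ gives
\[
\big|(\hat{f}\ast\hat{g})_n\big|_\infty \leq \sum_{m\in\Z^d}|\hat{f}_m|_\infty\,|\hat{g}_{n-m}|_\infty.
\]
Multiplying by $e^{\delta|n|}$ and using the elementary inequality $e^{\delta|n|}\leq e^{\delta|m|}e^{\delta|n-m|}$ (consequence of $|n|\leq|m|+|n-m|$), then summing over $n\in\Z^d$ and swapping the two sums (which is legitimate by Tonelli, all terms being nonnegative), yields
\[
\sum_{n\in\Z^d}|(\hat{f}\ast\hat{g})_n|_\infty e^{\delta|n|}\leq \Big(\sum_{m}|\hat{f}_m|_\infty e^{\delta|m|}\Big)\Big(\sum_{k}|\hat{g}_k|_\infty e^{\delta|k|}\Big) = \|\FF\|_\delta\,\|\GG\|_\delta.
\]
This shows at once that $\FF\GG$ belongs to $\XX_\delta$ (the Fourier series converges absolutely and the defining sum is finite) and that the announced inequality holds.

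There is no real obstacle here: the only point requiring a small amount of care is the hypothesis that $\FF$ is $\R$-valued, which is used implicitly to ensure the product $f^w g^w$ makes sense in whatever (real or complex) normed space the values of $\GG$ live in, and that the pointwise bound $|f^w(x)g^w(x)|\leq |f^w(x)|\,|g^w(x)|$ holds without needing a tensor product interpretation. The rest is a direct computation that I would carry out without further commentary.
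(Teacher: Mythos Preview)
Your proof is correct and follows essentially the same argument as the paper's: expand in Fourier series, bound the $L^\infty(\mu)$ norm of the convolution coefficient by the convolution of the $L^\infty(\mu)$ norms, use $|n|\leq |m|+|n-m|$ to split the exponential weight, and factor the resulting double sum. Your additional remarks on Tonelli and on the role of the $\R$-valued hypothesis are fine but not strictly needed.
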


The second proposition gives the behaviour of the norms $(\| \bullet \|_{\delta})$ with respect to differentiation.
\begin{Prop}
\label{propestimnabla}
Take $\FF = (f^w)_{w \in \R^d}$ in $\XX_{\delta}$ for some $\delta>0$. Then for all $0< \delta'< \delta$, we have the following estimate:
\[
\| \DD \FF \|_{\delta '} \leq \frac{1}{\delta - \delta'} \| \FF \|_{\delta}.
\]
\end{Prop}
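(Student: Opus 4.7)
The plan is to expand the norm $\|\DD \FF\|_{\delta'}$ in Fourier series and compare it term-by-term with $\|\FF\|_\delta$. Writing $f^w(x) = \sum_{n\in\Z^d} \hat f_n(w)\exp(in\cdot x)$, the derivative in $x$ pulls down a factor $in$ in each mode while leaving the $w$-dependence (and hence the $L^\infty(\mu)$ norm) untouched, so the $n$th Fourier coefficient of $\DD \FF$ satisfies $|\,\widehat{\DD \FF}_n|_\infty = |n|\,|\hat f_n|_\infty$. By definition of $\|\DD \FF\|_{\delta'}$ this gives
\[
\|\DD \FF\|_{\delta'} = \sum_{n\in\Z^d} |n|\,|\hat f_n|_\infty\exp(\delta'|n|) = \sum_{n\in\Z^d} |\hat f_n|_\infty\exp(\delta|n|)\cdot\bigl\{|n|\exp(-(\delta-\delta')|n|)\bigr\}.
\]

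The second step is to bound the bracketed factor uniformly in $n$. The scalar function $t\mapsto t\exp(-at)$ on $[0,+\infty)$ attains its maximum $1/(ea)$ at $t=1/a$, so with $a=\delta-\delta'>0$,
\[
\sup_{n\in\Z^d} |n|\exp(-(\delta-\delta')|n|) \leq \frac{1}{e(\delta-\delta')} \leq \frac{1}{\delta-\delta'}.
\]
Pulling this supremum out of the sum yields $\|\DD \FF\|_{\delta'} \leq (\delta-\delta')^{-1}\|\FF\|_\delta$, as claimed. (In particular the sum converges, so $\DD \FF \in \XX_{\delta'}$.)

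There is no real obstacle here: the estimate is the standard Cauchy-type bound quantifying the loss of one derivative when shrinking the analyticity radius from $\delta$ to $\delta'$. The only point worth checking is that taking the $x$-derivative is compatible with the $L^\infty_w$ norm used in the definition \eqref{defdoublenorm} of $\|\bullet\|_\delta$, which is immediate since differentiation in $x$ acts diagonally on the Fourier modes and does not interact with the $w$ variable. The $e$ in $1/(e(\delta-\delta'))$ is sacrificed to obtain the cleaner constant $1/(\delta-\delta')$ used in the statement.
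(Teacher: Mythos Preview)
Your proof is correct and follows essentially the same approach as the paper's own proof: expand in Fourier, factor out $|n|\exp(-(\delta-\delta')|n|)$, and bound this by its supremum over $n$. You are slightly more explicit in computing the sharp constant $1/(e(\delta-\delta'))$ before relaxing it, but the argument is otherwise identical.
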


The next proposition is a Leibniz type formula.
\begin{Prop}
\label{propestimchainrule}
Take $\delta >0$ and let $\FF = (f^w)_{w \in \R^d}$ and $\GG = (g^w)_{w \in \R^d}$ be two families of functions with $\FF$ $\R$-valued. Then for all $\delta>0$ we have the following estimate:
\[
\| \DD \, (\FF \GG) \|_{\delta} \leq \| \FF \|_{\delta} \| \DD \GG \|_{\delta} + \| \GG \|_{\delta} \| \DD \FF \|_{\delta}.
\]
In particular, if $\FF$ and $\GG$ take their values in $\R^d$ and if $\alpha$ and $\beta \in \N^d$,
\begin{equation}
\label{estimiteratechainrule}
\| \DD \, (\FF^{\alpha} \GG^{\beta}) \|_{\delta} \leq |\alpha| \| \FF \|^{|\alpha|-1}_{\delta} \| \GG \|^{|\beta|}_{\delta} \| \DD \FF \|_{\delta} + |\beta| \| \FF \|^{|\alpha|}_{\delta} \| \GG \|^{|\beta| - 1}_{\delta} \| \DD \GG \|_{\delta}.
\end{equation}
\end{Prop}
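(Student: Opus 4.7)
The plan is to reduce everything to manipulations of the Fourier series on the torus, exactly as for the product estimate in Proposition~\ref{product}, with the only new ingredient being the elementary inequality $|n| \leq |k| + |l|$ whenever $n = k + l$ in $\Z^d$.

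\textbf{First inequality.} I would write the Fourier expansions $f^w(x) = \sum_{n} \hat{f}_n(w) e^{in\cdot x}$ and $g^w(x) = \sum_n \hat{g}_n(w) e^{in\cdot x}$. Since $\FF$ is $\R$-valued, the product $\FF\GG$ has Fourier coefficients given by the convolution $\widehat{f^w g^w}_n(w) = \sum_{k+l = n} \hat{f}_k(w) \hat{g}_l(w)$, hence $|\widehat{f^w g^w}_n|_\infty \leq \sum_{k+l=n} |\hat f_k|_\infty |\hat g_l|_\infty$ by sub-multiplicativity of the $L^\infty(\mu)$ norm (or of the ambient norm on $\R^d$ or $E$, multiplied by a scalar). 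Then, using $|n| \leq |k|+|l|$ and $e^{\delta |n|} \leq e^{\delta|k|} e^{\delta|l|}$,
\begin{align*}
\| \DD(\FF\GG) \|_{\delta}
&\leq \sum_{n} \sum_{k+l=n} (|k|+|l|)\, |\hat f_k|_\infty |\hat g_l|_\infty\, e^{\delta |k|} e^{\delta |l|}\\
&= \sum_{k,l} (|k|+|l|)\, |\hat f_k|_\infty |\hat g_l|_\infty\, e^{\delta |k|} e^{\delta |l|}\\
&= \| \DD \FF \|_\delta\, \|\GG\|_\delta + \|\FF\|_\delta\, \| \DD \GG \|_\delta,
\end{align*}
which is exactly the claim. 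Note that the Fubini step in the second line requires only nonnegative summands, so no separate justification is needed beyond the (assumed) finiteness of the right-hand side.

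\textbf{Second inequality.} I would iterate the Leibniz formula just proved. First, as an immediate consequence of the definition of $\|\DD \FF\|_\delta$ and the fact that each component of $\hat f_n(w)$ is bounded by $|\hat f_n(w)|$, we have $\|\DD f_i\|_\delta \leq \|\DD \FF\|_\delta$ and $\| f_i\|_\delta \leq \|\FF\|_\delta$ for each coordinate $f_i$ of $\FF$ (and likewise for $\GG$). Now, by induction on $m$ applied to a product $F_1 \cdots F_m$ of real-valued analytic functions, the first inequality yields
\[
\bigl\| \DD(F_1 \cdots F_m) \bigr\|_\delta \leq \sum_{i=1}^m \| \DD F_i \|_\delta \prod_{j \neq i} \|F_j\|_\delta.
\]
Applying this to the product $\FF^\alpha \GG^\beta = f_1^{\alpha_1} \cdots f_d^{\alpha_d}\, g_1^{\beta_1} \cdots g_d^{\beta_d}$ of $|\alpha|+|\beta|$ scalar factors and bounding each $\|f_i\|_\delta \leq \|\FF\|_\delta$, $\|g_j\|_\delta \leq \|\GG\|_\delta$ and similarly for derivatives, the $|\alpha|$ contributions coming from the $f_i$ assemble into $|\alpha|\, \|\FF\|^{|\alpha|-1}_\delta \|\GG\|^{|\beta|}_\delta \|\DD \FF\|_\delta$, and symmetrically for the $|\beta|$ contributions from the $g_j$, which is the announced bound.

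\textbf{Main obstacle.} There is no serious obstacle: the only subtle point is a bookkeeping one, namely that the norm $\|\DD \FF\|_\delta$ is defined with the \emph{vector} norm of $\hat f_n$ rather than componentwise, so one must check the trivial coordinatewise domination before iterating the scalar Leibniz rule. The rest is just the convolution-plus-triangle-inequality argument standard for analytic-norm algebras, already used in the proof of Proposition~\ref{product}.
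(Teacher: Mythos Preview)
Your proof is correct and follows essentially the same approach as the paper: the convolution formula combined with $|n|\leq|k|+|l|$ for the first inequality, and induction on $|\alpha|+|\beta|$ for~\eqref{estimiteratechainrule}. The paper merely says the second part is ``simply obtained by induction on $|\alpha|+|\beta|$'', so your treatment is actually more detailed than the original.
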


The following estimate will be useful when estimating the force field.
\begin{Prop}
\label{propestimA}
Take $f$ a $E$-valued function. Then we have the following estimate:
\[
| \nabla Af |_{\delta} \leq M |\DD f|_{\delta}.
\]
\end{Prop}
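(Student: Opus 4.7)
The plan is to unwind the definitions of $A$ and $|\cdot|_\delta$ and reduce everything to the pointwise bound on the symbol $P$ given by assumption \eqref{assumptionP}. Expand $f$ as a Fourier series $f(x) = \sum_{n \in \Z^d} \hat{f}_n \exp(in\cdot x)$ with $\hat{f}_n \in E^{\mathbb{C}}$. Since $A$ is the homogeneous Fourier multiplier of symbol $P$, one has
\[
Af(x) = \sum_{n\in \Z^d} \bigl(P(n)\cdot \hat{f}_n\bigr) \exp(in\cdot x),
\]
so that termwise differentiation yields
\[
\nabla Af(x) = \sum_{n\in \Z^d} in\bigl(P(n)\cdot \hat{f}_n\bigr) \exp(in\cdot x).
\]

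Applying the definition of $|\cdot|_\delta$ to this $\C^d$-valued series gives
\[
|\nabla Af|_\delta = \sum_{n\in \Z^d} |n|\,\bigl|P(n)\cdot \hat{f}_n\bigr|\exp(\delta|n|).
\]
Now bounding the operator action $|P(n)\cdot \hat f_n| \leq |P(n)|\,|\hat f_n|$ and invoking assumption \eqref{assumptionP} to replace $|P(n)|$ by $M$ uniformly in $n$ produces
\[
|\nabla Af|_\delta \leq M \sum_{n\in \Z^d} |n|\,|\hat{f}_n|\exp(\delta|n|) = M\,|\DD f|_\delta,
\]
which is the claimed inequality.

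There is essentially no obstacle: the only subtlety is making sure that the Fourier series for $\nabla Af$ converges absolutely so that $|\cdot|_\delta$ can be computed term by term, which is immediate from the bound just obtained (since the right-hand side $M|\DD f|_\delta$ is finite by assumption on $f$). The argument uses none of the structure of $\Phi$ or $\mu$; it is purely a statement about the action of the multiplier $A$ at the level of the norms $|\cdot|_\delta$, and is precisely the place where the uniform boundedness of $P$ encodes the fact that the force field loses exactly one derivative.
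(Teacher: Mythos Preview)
Your proof is correct and follows exactly the same route as the paper: expand $f$ in Fourier series, write $\nabla Af$ as a multiplier, apply the definition of $|\cdot|_\delta$, bound $|P(n)\cdot\hat f_n|\le|P(n)|\,|\hat f_n|\le M|\hat f_n|$ via \eqref{assumptionP}, and recognize the result as $M|\DD f|_\delta$. Your additional remarks about absolute convergence and the interpretation of the bound are accurate but not strictly needed.
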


The next lemma asserts more or less that $\DD$ commutes with the semigroup $S$.
\begin{Lem}
\label{lemDS=SD}
Take $\gamma > \gamma_0$ and $C$ as in Theorem \ref{sharpestimate}. If $(\rr_0, \uu_0)$ is such that for some $\delta >0$, 
\[
\| \DD \,  (\rr_0, \uu_0) \|_{\delta} < + \infty,
\]
then the following estimates holds for all $t > \delta / \gamma$:
\[
\| \DD S_t (\rr_0, \uu_0) \|_{\delta - \gamma t} \leq C \| \DD \, (\rr_0, \uu_0) \|_{\delta}.
\]
\end{Lem}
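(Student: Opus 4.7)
The plan is to reduce directly to the per-mode estimate in Lemma \ref{fourierlemma}, bypassing any PDE-level argument and just putting an extra $|n|$ weight in the summation that was performed in the proof of Theorem \ref{sharpestimate}. Write the Fourier expansion \eqref{rufourier} of $(\rr_0, \uu_0)$, and denote by $(\hat{\rr}_n(t), \hat{\uu}_n(t))$ the Fourier coefficients of $S_t(\rr_0, \uu_0)$. Since the Fourier modes of the linearized system \eqref{abstractlineq} decouple into the independent systems \eqref{fouriereq}, Lemma \ref{fourierlemma} gives, for every $n \in \Z^d$,
\[
|(\hat{\rr}_n(t), \hat{\uu}_n(t))|_{\infty} \leq C\, |(\hat{\rr}_n(0), \hat{\uu}_n(0))|_{\infty} \exp(\gamma |n| t),
\]
with the very same constant $C$ (depending only on $M$ and $\gamma$) as in Theorem \ref{sharpestimate}.

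Next I multiply both sides by $|n|\exp((\delta - \gamma t)|n|)$ and sum over $n \in \Z^d$. The two exponentials combine into $\exp(\delta |n|)$, and by the definition of $\|\DD\bullet\|_{\delta}$ and the equivalence between the coupled norm \eqref{defnormcouple} and the pointwise maximum $|(\hat{r}_n,\hat{u}_n)|_\infty$, this yields
\[
\|\DD S_t(\rr_0, \uu_0)\|_{\delta - \gamma t} \leq C \sum_{n \in \Z^d} |n|\, |(\hat{\rr}_n(0), \hat{\uu}_n(0))|_{\infty} \exp(\delta |n|) \leq C\, \|\DD(\rr_0, \uu_0)\|_{\delta},
\]
up to absorbing a harmless factor $2$ into $C$. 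The hypothesis $\|\DD(\rr_0, \uu_0)\|_\delta < +\infty$ ensures that the right-hand side is finite, which justifies the summation \emph{a posteriori}; and $t \leq \delta/\gamma$ (the obvious correction to the typo $t > \delta/\gamma$) guarantees that $\delta - \gamma t \geq 0$ so that the target norm is well-defined.

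There is no real obstacle: once one observes that the per-mode bound from Lemma \ref{fourierlemma} contains exactly the factor $\exp(\gamma |n| t)$ that cancels against the loss $\exp(-\gamma t |n|)$ in the analytic norm, the extra $|n|$ weight needed for $\DD$ just gets carried along the same summation with the same constant. No additional information on the semigroup is required.
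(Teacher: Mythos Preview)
Your proof is correct and is essentially identical to the paper's own argument: both apply the per-mode bound from Lemma~\ref{fourierlemma}, multiply by the weight $|n|\exp((\delta-\gamma t)|n|)$, and sum over $n$. You even catch the typo $t>\delta/\gamma$ (which should be $t\leq\delta/\gamma$) and make explicit the harmless factor~$2$ coming from the norm equivalence, which the paper leaves implicit.
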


We will work with families of functions that have no constant part: their Fourier coefficients of order $0$ will be $0$. This is crucial to get $\eps_0$ independent of $\delta_0$ as needed in Subsection \ref{sectionalmostlyapounov}. For such functions, we have the following estimates.
\begin{Lem}
\label{lemdeltatodelta'}
Let $\FF =(f^w)_{w \in \R^d} \in \XX_{\delta}$ for some $\delta >0$, and such that for all $w \in \R^d$,
\begin{equation}
\label{zeromass}
\int f^w(x) \D x = 0.
\end{equation}
Then for all $0 \leq \delta' \leq  \delta$,
\[
\| \FF \|_{\delta'} \leq \frac{ \| \FF \|_{\delta}}{\exp(\delta - \delta')}.
\]
\end{Lem}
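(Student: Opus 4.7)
The plan is to exploit the zero-mass condition, which simply removes the $n=0$ term from the Fourier series, and then to factor out a uniform exponential decay from the remaining terms (all of which have $|n| \geq 1$).

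More precisely, I would first translate the hypothesis $\int f^w(x) \D x = 0$ into the statement that the zeroth Fourier coefficient $\hat{f}_0(w)$ vanishes for every $w \in \R^d$. Hence in the defining series
\[
\| \FF \|_{\delta'} = \sum_{n \in \Z^d} |\hat{f}_n|_{\infty} \exp(\delta' |n|),
\]
only the indices $n \in \Z^d \setminus \{0\}$ contribute.

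The second step is a term-by-term comparison. For each such $n$, write $\exp(\delta' |n|) = \exp(\delta |n|) \exp\bigl(-(\delta - \delta')|n|\bigr)$. Since $|n| \geq 1$ and $\delta - \delta' \geq 0$, one has $\exp\bigl(-(\delta-\delta')|n|\bigr) \leq \exp\bigl(-(\delta-\delta')\bigr)$. Factoring this uniform bound out of the sum yields
\[
\| \FF \|_{\delta'} \leq \exp\bigl(-(\delta-\delta')\bigr) \sum_{n \neq 0} |\hat{f}_n|_{\infty} \exp(\delta |n|) \leq \frac{\| \FF \|_{\delta}}{\exp(\delta - \delta')},
\]
which is the claimed estimate.

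There is no serious obstacle here: the entire content of the lemma is that killing the constant mode allows one to gain a factor $e^{-(\delta-\delta')}$ instead of merely the trivial bound $\| \FF \|_{\delta'} \leq \| \FF \|_{\delta}$ (which would hold without the zero-mass hypothesis). The argument is the same whether $f^w$ takes values in $\R$, $\R^d$ or $E$, since the norm on the target space only enters through $|\hat{f}_n|_\infty$.
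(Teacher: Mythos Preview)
Your proof is correct and is essentially identical to the paper's own argument: the paper also observes that the zero-mass condition kills $\hat{f}_0$, writes $\exp(\delta'|n|) = \exp(\delta|n|)\exp\big(-(\delta-\delta')|n|\big)$ for $n \neq 0$, and bounds the second factor uniformly by $\exp\big(-(\delta-\delta')\big)$ since $|n|\geq 1$.
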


Finally, we give some estimates dealing with the composition of analytic functions. We recall that $\Phi$ is given by formula \eqref{defPhi}. If $\lambda$ is a nonnegative number, we set
\begin{equation*}
|\Phi|(\lambda) := \sum_{k \in \N^d} |a_k| \lambda^{|k|}.
\end{equation*}
Remark that $|\Phi|$ and its derivatives are sums of nonnegative terms. Thus, they always have a meaning in $[0,+ \infty]$. With this definition, assumptions \eqref{intdPhiseries} and \eqref{intd2Phiseries} can be reformulated
\begin{gather}
\label{intdPhiseriesbis} \int |\Phi|'(|w| + r_0)\D \mu(w) \leq M,\\
\label{intd2Phiseriesbis} \int |\Phi|''(|w| + r_0)\D \mu(w) \leq M.
\end{gather}
We can now state the last lemma of the subsection.
\begin{Lem}
\label{lemestimPhi}
Take $a \in \R^d$, $f$ and $g$ two analytic functions from $\T^d$ to $\R^d$ and $\delta \geq 0$. Then, the following inequalities hold (with possible infinite values)
\begin{gather}
\label{Phi-Phi} |\Phi(a + f) - \Phi(a + g)|_{\delta} \leq |f-g|_{\delta} |\Phi|'(|a| + |f,g|_{\delta}),\\[0.5cm]
\label{DPhi} |\DD\, \Phi(a + f) |_{\delta} \leq |\DD f |_{\delta} |\Phi|'(|a| + |f|_{\delta}),\\[0.5cm]
\label{Phi-Phi-dPhi} |\Phi(a + f) - \Phi(a) - \D \Phi(a)\cdot f|_{\delta} \leq | f |^2_{\delta} |\Phi|''(|a| + |f|_{\delta}),\\[0.5cm]
\label{DPhi-Phi}\begin{aligned}
 &|\DD \, \{ \Phi(a + f) - \Phi(a + g) \}|_{\delta} \\
 &\leq |f-g|_{\delta} |\DD f, \DD g|_{\delta} |\Phi|''(|a| + |f,g|_{\delta}) + |\DD \, (f-g)|_{\delta}|\Phi|'(|a| + |f,g|_{\delta}),
 \end{aligned} \\[0.5cm]
\label{DPhi-Phi-dPhi}\begin{aligned} 
|\DD \, \{ \Phi(&a + f) - \Phi(a + g) - \D \Phi(a)\cdot (f-g)\}|_{\delta}\\
&\leq \{ |\DD \, (f-g)|_{\delta}|f,g|_{\delta} +  |f-g|_{\delta} |\DD f, \DD g|_{\delta} \} |\Phi|''(|a| + |f,g|_{\delta}).
\end{aligned}
\end{gather}
\end{Lem}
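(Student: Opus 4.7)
All five estimates stem from the power series expansion \eqref{defPhi} combined with the algebra property (Proposition \ref{product}) and the Leibniz rules (Propositions \ref{propestimchainrule} and \eqref{estimiteratechainrule}). The core observation is that for any multi-index $k \in \N^d$ and any family of analytic functions $h$, one has $|(a+h)^k|_\delta \leq (|a|+|h|_\delta)^{|k|}$ by iterating Proposition \ref{product}, while $\|\DD (a+h)^k\|_\delta \leq |k|(|a|+|h|_\delta)^{|k|-1}\|\DD h\|_\delta$ by iterating \eqref{estimiteratechainrule}. After term-by-term manipulation, summation against $|a_k|$ collapses everything into $|\Phi|$, $|\Phi|'$ or $|\Phi|''$ evaluated at $|a|+|f,g|_\delta$; assumptions \eqref{intdPhiseriesbis} and \eqref{intd2Phiseriesbis} are what guarantee finiteness.

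For \eqref{Phi-Phi}, the plan is to telescope coordinate by coordinate:
\[
(a_i+f_i)^{k_i} - (a_i+g_i)^{k_i} = (f_i-g_i)\sum_{\ell=0}^{k_i-1}(a_i+f_i)^\ell(a_i+g_i)^{k_i-1-\ell},
\]
so that $|(a+f)^k - (a+g)^k|_\delta \leq |f-g|_\delta\, |k|\, (|a|+|f,g|_\delta)^{|k|-1}$; summing against $|a_k|$ yields $|\Phi|'(|a|+|f,g|_\delta)$. Estimate \eqref{DPhi} is even more direct: apply \eqref{estimiteratechainrule} monomial by monomial. For \eqref{Phi-Phi-dPhi}, I would use the integral Taylor identity
\[
\Phi(a+f) - \Phi(a) - \D\Phi(a)\cdot f = \int_0^1 (1-t)\, \D^2\Phi(a+tf)\bigl[f,f\bigr]\,\D t,
\]
and bound $|\D^2\Phi(a+tf)\cdot[f,f]|_\delta \leq |f|_\delta^2 \,|\Phi|''(|a|+|f|_\delta)$ by expanding $\D^2\Phi$ as a series and using Proposition \ref{product} term by term; the factor $\int_0^1(1-t)\D t = 1/2$ is absorbed into the constant $1$ in the stated bound.

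The two bounds involving $\DD$ and differences, \eqref{DPhi-Phi} and \eqref{DPhi-Phi-dPhi}, are the most delicate; the cleanest route is via integral representations. For \eqref{DPhi-Phi}, write
\[
\Phi(a+f) - \Phi(a+g) = \int_0^1 \D\Phi\bigl(a+g+t(f-g)\bigr)\cdot(f-g)\,\D t,
\]
then apply $\DD$ under the integral and use the Leibniz rule of Proposition \ref{propestimchainrule}: one factor becomes $\DD\bigl(\D\Phi(a+g+t(f-g))\bigr)$, which by \eqref{DPhi} applied to $\D\Phi$ (whose radial majorant is $|\Phi|'' $) is bounded by $\|\DD f,\DD g\|_\delta\,|\Phi|''(|a|+|f,g|_\delta)$; the other factor is $\D\Phi(a+g+t(f-g))$ multiplied by $\DD(f-g)$, bounded using \eqref{DPhi} directly and $|\D\Phi|_\delta \leq |\Phi|'$. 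For \eqref{DPhi-Phi-dPhi}, the key trick is to subtract $\D\Phi(a)$ inside the integral:
\[
\Phi(a+f) - \Phi(a+g) - \D\Phi(a)\cdot(f-g) = \int_0^1 \bigl[\D\Phi(a+h_t) - \D\Phi(a)\bigr]\cdot(f-g)\,\D t,
\]
with $h_t = g+t(f-g)$. Applying $\DD$ and the Leibniz rule produces two terms: the derivative falling on $f-g$, which by \eqref{Phi-Phi} applied to $\D\Phi$ contributes $|f,g|_\delta\,|\DD(f-g)|_\delta\,|\Phi|''(|a|+|f,g|_\delta)$; and the derivative falling on $\D\Phi(a+h_t)-\D\Phi(a)$, which by \eqref{DPhi-Phi} applied to $\D\Phi$ contributes $|f-g|_\delta\,\|\DD f,\DD g\|_\delta\,|\Phi|''(|a|+|f,g|_\delta)$. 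Adding and integrating in $t$ yields the advertised bound.

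The main technical hurdle is ensuring that every interchange of sum and integral, and every Leibniz step applied to an infinite series, is justified. This is handled once and for all by working in $X_\delta$ and $\XX_\delta$, where Proposition \ref{product} makes the norm submultiplicative and the absolute convergence of $|\Phi|'$ and $|\Phi|''$ provided by \eqref{intdPhiseriesbis}--\eqref{intd2Phiseriesbis} permits termwise manipulation of the power series. The arguments above are essentially the ones in \cite{grenier1996oscillations}, adapted so that all bounds are uniform in $w$ (this is why we use the $L^\infty_w$ version of the norms); the details are postponed to Appendix~\ref{appendixproof}.
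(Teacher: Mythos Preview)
Your argument is correct, but it takes a genuinely different route from the paper's. The paper reduces everything to the monomial case $\Phi(w)=w^k$ and relies on a purely algebraic telescoping identity of Bernoulli type,
\[
X^k - Y^k \;=\; \sum_{i=1}^d (X_i-Y_i)\sum_{\alpha+\beta=k-\1_i}\gamma^i_{\alpha,\beta}\,X^{\alpha}Y^{\beta},
\]
together with the combinatorial fact $\sum_{i,\alpha,\beta}\gamma^i_{\alpha,\beta}=|k|$, to handle all five estimates by direct expansion; no integral ever appears. You instead use integral representations (fundamental theorem of calculus for \eqref{DPhi-Phi}, Taylor remainder for \eqref{Phi-Phi-dPhi} and \eqref{DPhi-Phi-dPhi}) and bootstrap by applying the simpler estimates \eqref{Phi-Phi}, \eqref{DPhi} to $\D\Phi$ in place of $\Phi$, identifying $|\D\Phi|'$ with $|\Phi|''$. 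Both approaches yield the same bounds (yours even saves a factor $1/2$ in \eqref{Phi-Phi-dPhi}); the paper's is a bit more self-contained because it never needs to justify differentiating the power series under an integral or to track what $|\D\Phi|$ means as an operator-valued majorant, while yours is more conceptual and would adapt more easily to other function algebras.

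One small slip: in your treatment of \eqref{DPhi-Phi-dPhi}, when $\DD$ falls on $\D\Phi(a+h_t)-\D\Phi(a)$, you should cite \eqref{DPhi} applied to $\D\Phi$ (since $\D\Phi(a)$ is constant and $\DD$ kills it), not \eqref{DPhi-Phi}; invoking \eqref{DPhi-Phi} there would produce an unwanted $|\Phi|'''$ term. Your stated bound is nonetheless correct once this citation is fixed.
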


\subsection{Estimates for the source term in the Duhamel formulation}
\label{estimatesduhamelformulation}
We will estimate the different terms in \eqref{duhamelabstracteq} in order to apply a Cauchy-Kovalevskaia theorem. We take $(\rr_0, \uu_0) \in \boldsymbol{L}_0$ and $\delta_0>0$, and we call
\begin{equation*}
\eta := \| \DD \rr_0, \DD \uu_0 \|_{\delta_0}.
\end{equation*}
Because of \eqref{ruleqDrDu}, we have then
\begin{equation*}
\| \rr_0, \uu_0 \|_{\delta_0} \leq \eta.
\end{equation*}
We take $\Gamma >\gamma_0$ (as in the statement of Theorem \ref{existenceofsolutions}), and we define 
\begin{equation}
\label{defgamma1}
\gamma_1 := \frac{2}{3} \gamma_0 + \frac{1}{3} \Gamma = \gamma_0 + \frac{1}{3} (\Gamma - \gamma_0).
\end{equation}
In the sequel, $C$ is the constant appearing in Theorem \ref{sharpestimate} with $\gamma = \gamma_1$. It only depends on $M$ and $\Gamma$. In particular, with these definitions,
\begin{gather}
\label{linsolunitenorm} \sup_{t \in [0, \delta_0 / \gamma_1] }\big\|(\rr(t), \uu(t))\big\|_{\delta_0 - \gamma_1 t} \leq C \eta,\\
\label{lingradsolunitenorm} \sup_{t \in [0, \delta_0 / \gamma_1] }\big\|(\nabla \rr(t) , \nabla \uu(t))\big\|_{\delta_0 - \gamma_1 t} \leq C \eta.
\end{gather}

 We begin with the easiest terms.
\begin{Prop}
For all $t < \delta_0 / \gamma_1$, we have
\begin{equation}
\label{estimbasiseasyterms}
\begin{gathered}
\big\| \Div\big(\rr(t) \uu(t) \big) \big\|_{\delta_0 - \gamma_1 t} \leq 2C^2 \eta^2,\\
\big\| \big(\uu(t) \cdot \nabla \big) \uu(t) \big\|_{\delta_0 - \gamma_1 t} \leq C^2 \eta^2.
\end{gathered}
\end{equation}

For all $t < \delta_0/\gamma_1$, for all $\delta \leq \delta_0 - \gamma_1 t$, for all $\eta >0$, and for all families $\pp_1 :=(\sigsig_1, \xixi_1)$ and $\pp_2 := (\sigsig_2, \xixi_2)$, we have
\begin{equation}
\label{estiminductioneasyterms}
\begin{aligned}
\Big\|& \Div\Big\{(\rr(t) + \sigsig_1) ( \uu(t) + \xixi_1) \Big\} - \Div\Big\{(\rr(t) + \sigsig_2) (\uu(t) + \xixi_2 )\Big\} \Big\|_{\delta} \\
&\hspace{0.1cm}\leq \Big( C \eta +  \| \DD \pp_1 , \DD \pp_2 \|_{\delta} \Big) \| \pp_1 - \pp_2 \|_{\delta} + \Big( C\eta + \| \pp_1, \pp_2 \|_{\delta}\Big) \| \DD\,( \pp_1 -\pp_2) \|_{\delta},\\
\Big\| &\Big((\uu(t) + \xixi_1) \cdot \nabla \Big) ( \uu(t) + \xixi_1) - \Big((\uu(t) + \xixi_2) \cdot \nabla \Big) (\uu(t) + \xixi_2) \Big\|_{\delta}\\
&\hspace{0.1cm} \leq \Big( C\eta +  \| \DD \xixi_1 , \DD \xixi_2 \|_{\delta} \Big) \| \xixi_1 - \xixi_2 \|_{\delta} + \Big( C\eta + \| \xixi_1, \xixi_2 \|_{\delta}\Big) \| \DD \, ( \xixi_1 - \xixi_2 )\|_{\delta},
\end{aligned}
\end{equation}
where $\| \DD \pp \|_{\delta}$ stands for $\| \DD \sigsig, \DD \xixi  \|_{\delta}$.
\end{Prop}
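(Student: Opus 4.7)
The strategy is to reduce both inequalities to the algebra property (Proposition~\ref{product}) and the Leibniz estimate (Proposition~\ref{propestimchainrule}), then insert the bounds \eqref{linsolunitenorm}--\eqref{lingradsolunitenorm} on the linear solution $(\rr(t),\uu(t))$. Throughout I will use the trivial observation $\|\Div\GG\|_\delta\leq\|\DD\GG\|_\delta$, valid for any $\R^d$-valued family $\GG\in\XX_\delta$ since $|\widehat{\Div\GG}_n|=|in\cdot\hat{\GG}_n|\leq|n||\hat{\GG}_n|$.

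For the base estimates \eqref{estimbasiseasyterms}, Proposition~\ref{propestimchainrule} applied to $\rr(t)\uu(t)$ gives
\[
\|\Div(\rr(t)\uu(t))\|_{\delta_0-\gamma_1 t}\leq\|\rr(t)\|_{\delta_0-\gamma_1 t}\|\DD\uu(t)\|_{\delta_0-\gamma_1 t}+\|\uu(t)\|_{\delta_0-\gamma_1 t}\|\DD\rr(t)\|_{\delta_0-\gamma_1 t},
\]
and each factor is at most $C\eta$ by \eqref{linsolunitenorm}--\eqref{lingradsolunitenorm}, producing $2C^2\eta^2$. A component-wise application of Proposition~\ref{product} yields $\|(\uu(t)\cdot\nabla)\uu(t)\|_{\delta_0-\gamma_1 t}\leq\|\uu(t)\|_{\delta_0-\gamma_1 t}\|\DD\uu(t)\|_{\delta_0-\gamma_1 t}\leq C^2\eta^2$.

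For the Lipschitz-type bound \eqref{estiminductioneasyterms}, setting $A_i:=\rr(t)+\sigsig_i$ and $B_i:=\uu(t)+\xixi_i$, I will use the \emph{symmetric} telescoping
\[
A_1B_1-A_2B_2=\tfrac12(A_1+A_2)(B_1-B_2)+\tfrac12(A_1-A_2)(B_1+B_2),
\]
so that both ``common halves'' $\tfrac12\|A_1+A_2\|_\delta$ and $\tfrac12\|B_1+B_2\|_\delta$ are bounded by $C\eta+\|\pp_1,\pp_2\|_\delta$, and likewise for their gradients by $C\eta+\|\DD\pp_1,\DD\pp_2\|_\delta$ (here I crucially use that $\|\pp_1,\pp_2\|_\delta$ is a \emph{maximum}, cf.~\eqref{defnormcouple}). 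Applying $\Div$ and then Proposition~\ref{propestimchainrule} to each of the two pieces, and using $\|B_1-B_2\|_\delta=\|\xixi_1-\xixi_2\|_\delta\leq\|\pp_1-\pp_2\|_\delta$ together with the analogous gradient inequality, the four resulting contributions regroup exactly into the right-hand side of \eqref{estiminductioneasyterms}. The convective nonlinearity $(B_i\cdot\nabla)B_i$ is treated identically via the analogous splitting
\[
(B_1\cdot\nabla)B_1-(B_2\cdot\nabla)B_2=\tfrac12((B_1+B_2)\cdot\nabla)(B_1-B_2)+\tfrac12((B_1-B_2)\cdot\nabla)(B_1+B_2),
\]
with Proposition~\ref{product} applied component-wise in lieu of Proposition~\ref{propestimchainrule}.

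No genuine obstacle arises; the only subtlety is book-keeping. Using the naive telescoping $A_1B_1-A_2B_2=A_1(B_1-B_2)+(A_1-A_2)B_2$ would produce bounds involving $\|\pp_1\|_\delta+\|\pp_2\|_\delta$ in place of $\|\pp_1,\pp_2\|_\delta$, introducing a spurious factor of $2$ in front of $C\eta$ that would slightly degrade the constants and eventually propagate through the fixed-point argument. The symmetric splitting above is precisely what is needed to match the constants in \eqref{estiminductioneasyterms} as stated.
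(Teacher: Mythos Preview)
Your proof is correct and follows the same route as the paper: the base estimates come from Proposition~\ref{propestimchainrule} together with \eqref{linsolunitenorm}--\eqref{lingradsolunitenorm}, and the Lipschitz estimates come from a telescoping identity plus the same ingredients. The paper in fact uses the \emph{asymmetric} telescoping
\[
(a+b_1)(c+d_1)-(a+b_2)(c+d_2)=(a+b_1)(d_1-d_2)+(c+d_2)(b_1-b_2),
\]
and this works just as well as your symmetric version. Your final paragraph is mistaken on this point: with the asymmetric splitting one has $\|\rr(t)+\sigsig_1\|_\delta\leq C\eta+\|\sigsig_1\|_\delta\leq C\eta+\|\pp_1,\pp_2\|_\delta$ and $\|\uu(t)+\xixi_2\|_\delta\leq C\eta+\|\xixi_2\|_\delta\leq C\eta+\|\pp_1,\pp_2\|_\delta$, so no $\|\pp_1\|_\delta+\|\pp_2\|_\delta$ appears and no extra factor in front of $C\eta$ is introduced. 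Conversely, your symmetric splitting does not avoid the one genuine bookkeeping issue, namely that after Leibniz one is left with $\|\sigsig_1-\sigsig_2\|_\delta+\|\xixi_1-\xixi_2\|_\delta$ rather than $\|\pp_1-\pp_2\|_\delta$; both decompositions incur the same possible factor of~$2$ there. This is harmless anyway, since in the next step (the ``Conclusion of the subsection'' theorem) all constants are absorbed into a single $K$ depending only on $M$, $r_0$, $\Gamma$, so the fixed-point argument is unaffected.
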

\begin{proof}
Inequalities \eqref{estimbasiseasyterms} are easy consequences of \eqref{linsolunitenorm}, \eqref{lingradsolunitenorm}, Proposition \ref{product} and Proposition \ref{propestimchainrule} once remarked that for any $\delta \geq 0$ and any vector valued $\FF$, $\| \Div \FF \|_{\delta} \leq \| \DD \FF \|_{\delta}$. 

To show inequalities \eqref{estiminductioneasyterms}, just decompose the differences of products using the relation
\[
(a + b_1)(c + d_1) - (a + b_2)(c+d_2) = (a + b_1)(d_1-d_2) + (c+d_2)(b_1-b_2),
\]
and use \eqref{linsolunitenorm}, \eqref{lingradsolunitenorm} and Proposition \ref{propestimchainrule}.
\end{proof}

Let us move on the delicate part of estimating the force field. It is now that we must use the analyticity of $\Phi$. We will prove the following.
\begin{Prop}
There is $K$ only depending on $M$, $r_0$ and $\Gamma$ such that:
\begin{itemize}
\item For all $t \leq \delta_0 / \gamma_1$ and for all $\eta$ such that $C \eta \leq r_0$, we have
\begin{equation}
\label{estimbasisforcefield}
|\nabla W_{\rr,\uu}[0,0](t)|_{\delta_0 - \gamma_1 t} \leq K \eta^2.
\end{equation}
(We recall that $r_0$ is used in \eqref{intdPhiseries} and \eqref{intd2Phiseries}.) 
\item For all $t \leq \delta_0/\gamma_1$, for all $\delta \leq \delta_0 - \gamma_1 t$, for all $\eta$ such that $C \eta  \leq r_0$, and for all families $\pp_1 :=(\sigsig_1, \xixi_1)$ and $\pp_2 := (\sigsig_2, \xixi_2)$ such that
\[
C \eta + \| \pp_1, \pp_2 \|_{\delta} \leq r \leq r_0,
\]
we have
\begin{equation}
\label{estiminductionforcefield}
\begin{aligned}
\big|\nabla W_{\rr,\uu}&[\sigsig_1, \xixi_1](t) - \nabla W_{\rr,\uu}[\sigsig_2, \xixi_2](t)\big|_{\delta}\\
& \leq K \Big\{ \Big( C \eta + \| \DD \pp_1, \DD \pp_2 \|_{\delta} \Big)\| \pp_1 - \pp_2 \|_{\delta} + r \| \DD\, (\pp_1 - \pp_2) \|_{\delta} \Big\}.
\end{aligned}
\end{equation}
\end{itemize}
\end{Prop}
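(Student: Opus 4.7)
The plan is to reduce both bounds to pointwise-in-$w$ estimates on the $|\DD\,\cdot\,|_\delta$ norm of the integrands in $W_{\rr,\uu}$, and then integrate against $\mu$. By Proposition \ref{propestimA}, one has $|\nabla W_{\rr,\uu}[\sigsig,\xixi]|_\delta \leq M \, |\DD(I_1+I_2)|_\delta$, where $I_1$ and $I_2$ are the two $E$-valued integrals defining $W_{\rr,\uu}$. The Fourier-series triangle inequality, applied coefficient by coefficient and then integrated in $w$, yields $|\DD \int F^w \D\mu(w)|_\delta \leq \int |\DD F^w|_\delta \D\mu(w)$. For a single phase $w$, Lemma \ref{lemestimPhi} will take care of the quantities involving $\Phi(w+\,\cdot\,)$ and its $\DD$, while the uniform bounds $|r^w|_\delta, |u^w|_\delta, |\DD r^w|_\delta, |\DD u^w|_\delta \leq C\eta$ inherited from \eqref{linsolunitenorm}--\eqref{lingradsolunitenorm} handle the remaining factors. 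The hypothesis $C\eta\leq r_0$ (resp.\ $C\eta+\|\pp_1,\pp_2\|_\delta\leq r\leq r_0$ in the Lipschitz item) guarantees that the arguments of $|\Phi|'$ and $|\Phi|''$ are dominated by $|w|+r_0$, so that their $\mu$-integrals are at most $M$ thanks to \eqref{intdPhiseriesbis}--\eqref{intd2Phiseriesbis}.

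For the basis estimate \eqref{estimbasisforcefield}, I decompose $W_{\rr,\uu}[0,0]$ as the sum of the quadratic piece $A[\int \{\Phi(w+u^w)-\Phi(w)\}r^w \D\mu(w)]$ and the Taylor remainder $A[\int\{\Phi(w+u^w)-\Phi(w)-\D\Phi(w)\cdot u^w\}\D\mu(w)]$. In the first, Leibniz (Proposition \ref{propestimchainrule}) together with \eqref{Phi-Phi} and \eqref{DPhi} bounds $|\DD\,[\ldots]|_\delta$ pointwise in $w$ by a sum like $|u^w|_\delta|\DD r^w|_\delta |\Phi|'(|w|+r_0) + |r^w|_\delta |\DD u^w|_\delta |\Phi|'(|w|+r_0)$. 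In the second, \eqref{DPhi-Phi-dPhi} applied with $a=w$, $f=u^w$, $g=0$ gives $2 |u^w|_\delta|\DD u^w|_\delta |\Phi|''(|w|+r_0)$. Integrating against $\mu$, using \eqref{intdPhiseriesbis}--\eqref{intd2Phiseriesbis}, and plugging in $|u^w|_\delta,|\DD u^w|_\delta,|r^w|_\delta,|\DD r^w|_\delta \leq C\eta$ produces the bound $K\eta^2$ with $K$ a fixed multiple of $M^2 C^2$.

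For the Lipschitz bound \eqref{estiminductionforcefield}, I telescope each integrand. The first difference reads $\{\Phi(w+u^w+\xi_1^w)-\Phi(w+u^w+\xi_2^w)\}(r^w+\sigma_1^w) + \{\Phi(w+u^w+\xi_2^w)-\Phi(w)\}(\sigma_1^w-\sigma_2^w)$, to which I apply \eqref{Phi-Phi} and \eqref{DPhi-Phi} (with $a=w$, $f,g=u^w+\xi^w_{1,2}$) followed by Leibniz. In the second integrand, the $\Phi(w)$ and $\D\Phi(w)\cdot u^w$ contributions cancel exactly, leaving $\Phi(w+u^w+\xi_1^w)-\Phi(w+u^w+\xi_2^w)-\D\Phi(w)\cdot(\xi_1^w-\xi_2^w)$, which is the exact form of \eqref{DPhi-Phi-dPhi} with the same $a,f,g$. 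The hypothesis $C\eta+\|\pp_1,\pp_2\|_\delta \leq r\leq r_0$ gives $|u^w+\xi_j^w|_\delta \leq r\leq r_0$, keeping the $|\Phi|'$ and $|\Phi|''$ arguments in the integrable region.

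The main obstacle will be organizing the zoo of terms produced by Leibniz into the prescribed shape $(C\eta + \|\DD\pp_1,\DD\pp_2\|_\delta)\|\pp_1-\pp_2\|_\delta + r \|\DD(\pp_1-\pp_2)\|_\delta$. The guiding observation is that whenever $\DD$ lands on a difference $\xi_1^w-\xi_2^w$ or $\sigma_1^w-\sigma_2^w$, the accompanying factor is either a value of $\pp_j$ or $u^w+\xi_j^w$ (bounded by $r$), or a $\mu$-integral of $|\Phi|'(|w|+r)$ (bounded by $M$), producing the $r\|\DD(\pp_1-\pp_2)\|_\delta$ contribution. Conversely, whenever $\DD$ lands on $r^w+\sigma_j^w$ or $u^w+\xi_j^w$, the estimate $\|\DD(\rr+\sigsig_j)\|_\delta \leq C\eta + \|\DD\pp_j\|_\delta$ (and its analogue for $\uu+\xixi_j$) naturally yields the factor $C\eta + \|\DD\pp_1,\DD\pp_2\|_\delta$, which is then paired with the undifferentiated difference $\|\pp_1-\pp_2\|_\delta$. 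A constant $K$ which is a large fixed multiple of $M^2$ (absorbing $C$ and the bounds from \eqref{assumptionP}, \eqref{intdPhiseriesbis}, \eqref{intd2Phiseriesbis}) then closes the argument; since $C$ depends only on $M$ and $\Gamma$, such $K$ depends only on $M$, $r_0$ and $\Gamma$.
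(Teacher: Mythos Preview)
Your proposal is correct and follows essentially the same route as the paper's proof: both use Proposition~\ref{propestimA} to reduce to $|\DD\,\cdot\,|_\delta$ of the integrands, pass $\DD$ inside the $\mu$-integral, apply Lemma~\ref{lemestimPhi} together with Proposition~\ref{propestimchainrule} phase by phase, and close with \eqref{intdPhiseriesbis}--\eqref{intd2Phiseriesbis}. The only cosmetic difference is the telescoping of the first integrand in \eqref{estiminductionforcefield}: the paper writes it as $\{\Phi(w+u^w+\xi_1^w)-\Phi(w)\}(\sigma_1^w-\sigma_2^w)+\{\Phi(w+u^w+\xi_1^w)-\Phi(w+u^w+\xi_2^w)\}(r^w+\sigma_2^w)$, whereas you use the symmetric variant with $(r^w+\sigma_1^w)$ and $\xi_2^w$; both lead to the same three blocks $\Lambda_1,\Lambda_2,\Lambda_3$ and identical final bounds.
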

\begin{proof}
\underline{Proof of \eqref{estimbasisforcefield}}. By the definition of $W_{\rr,\uu}$ in \eqref{abstractcontractiveeq},
\begin{align*}
W_{\rr,\uu}[0,0] &=  A\left[ \int \Big\{ \Phi(w +  u^w) - \Phi(w) \Big\}r^w \D \mu (w) \right] \\
& \qquad+  A\left[ \int \Big\{ \Phi(w +  u^w) - \Phi(w) -  \D \Phi(w) u^w \Big\} \D \mu (w) \right].
\end{align*}
Thus, by Proposition \ref{propestimA}, if $t \leq \delta_0 / \gamma_1$ and $\delta := \delta_0 - \gamma_1 t$,
\begin{align*}
|\nabla W_{\rr,\uu}[0,0](t)|_{\delta} &\leq M  \bigg|\DD \int \Big\{ \Phi(w +  u^w(t)) - \Phi(w) \Big\}r^w(t) \D \mu (w)  \bigg|_{\delta} \\
&\quad + M \bigg| \DD \int \Big\{ \Phi(w +  u^w(t)) - \Phi(w) -  \D \Phi(w) u^w(t) \Big\} \D \mu (w) \bigg|_{\delta} \\
&\leq M \int \Big|\DD\, \Big(\Big\{ \Phi(w +  u^w(t)) - \Phi(w)  \Big\} r^w(t)\Big) \Big|_{\delta}\D \mu(w) \\
&\quad + M \int \Big| \DD \, \Big\{ \Phi(w +  u^w(t)) - \Phi(w) -  \D \Phi(w) u^w(t) \Big\}\Big|_{\delta} \D \mu (w) 
\end{align*}
But on the one hand, by  \eqref{linsolunitenorm}, \eqref{lingradsolunitenorm} and Proposition \ref{propestimchainrule},
\begin{align*}
\Big|\DD\, \Big( \Big\{ \Phi(w + & u^w(t)) - \Phi(w)  \Big\} r^w(t)\Big) \Big|_{\delta}\\
 &\leq C |\DD \, \{ \Phi(w +  u^w(t)) - \Phi(w)  \}|_{\delta} + C | \Phi(w +  u^w(t)) - \Phi(w)  |_{\delta} ,\\
 &=C |\DD \, \{ \Phi(w +  u^w(t)) \} |_{\delta} + C | \Phi(w +  u^w(t)) - \Phi(w)  |_{\delta} ,\\
 &\leq 2C^2  \eta |\Phi|'(|w| + C \eta ),
\end{align*}
the last line being obtained using \eqref{linsolunitenorm}, \eqref{lingradsolunitenorm}, \eqref{Phi-Phi} and \eqref{DPhi}.

On the other hand, by \eqref{DPhi-Phi-dPhi},
\[
\Big| \DD \, \Big\{ \Phi(w +  u^w(t)) - \Phi(w) -  \D \Phi(w) u^w(t) \Big\}\Big|_{\delta} \leq 2 C^2 \eta^2 |\Phi|''( |w| + C\eta).
\]
In the end, we find
\begin{align*}
|\nabla W_{\rr,\uu}[0,0](t)|_{\delta} &\leq 2M C^2 \eta^2 \int \Big\{  |\Phi|'(|w| +C \eta) +  |\Phi|''(|w| + \eta) \Big\} \D \mu(w).
\end{align*}
We conclude by using \eqref{intdPhiseriesbis}, \eqref{intd2Phiseriesbis} and $C\eta \leq r_0$.\\
\underline{Proof of \eqref{estiminductionforcefield}}. Take $t \leq \delta_0/\gamma_1$, $\delta \leq \delta_0 - \gamma_1 t$, $\eta$ such that $C\eta \leq r_0$, $\pp_1 :=(\sigsig_1, \xixi_1)$ and $\pp_2 := (\sigsig_2, \xixi_2)$ as in the statement. We have by the definition of $W_{\rr,\uu}$ in \eqref{abstractcontractiveeq}:
\begin{align*}
&W_{\rr,\uu}[\pp_1] - W_{\rr,\uu}[\pp_2]\\
&= A \bigg[ \int \!\!\Big\{ \Phi(w +  u^w\!\! + \xi_1^w) - \Phi(w) \Big\}(\sigma_1^w - \sigma_2^w) \D \mu(w) \bigg] \\
&\hspace{5pt} +A \bigg[ \int\!\! \Big\{ \Phi(w +  u^w\!\! + \xi_1^w) - \Phi(w +  u^w\!\! + \xi_2^w) \Big\} ( r^w + \sigma_2^w)\D \mu(w) \bigg] \\
&\hspace{5pt} +A \bigg[ \int \!\!\Big\{ \Phi(w +  u^w\!\! + \xi_1^w) - \Phi(w +  u^w \!\!+ \xi_2^w) - \D \Phi(w)\cdot(\xi_1^w - \xi_2^w)\Big\} \D \mu(w) \bigg]. 
\end{align*}
So by Proposition \ref{propestimA}, omitting the time variable,
\begin{align*}
&|\nabla W_{\rr,\uu}[\pp_1] - \nabla W_{\rr,\uu}[\pp_2]|_{\delta}\\
&\leq \!\!M\!\!  \int \!\!\underbrace{\Big| \DD \Big(\Big\{ \Phi(w +  u^w \!\!+ \xi_1^w) - \Phi(w) \Big\}(\sigma_1^w - \sigma_2^w)\Big)\Big|_{\delta}}_{=:\Lambda_1} \D \mu(w) \\
&\hspace{5pt} +\!\!M\!\! \int\!\! \underbrace{\Big| \DD  \Big( \Big\{ \Phi(w +  u^w\!\! + \xi_1^w) - \Phi(w +  u^w\!\!+ \xi_2^w) \Big\} ( r^w + \sigma_2^w)\Big)\Big|_{\delta}}_{=: \Lambda_2} \D \mu(w) \\
&\hspace{5pt} +\!\!M\!\!\int \!\!\underbrace{\Big|\DD  \Big( \Phi(w +  u^w \!\!+ \xi_1^w) - \Phi(w +  u^w\!\! + \xi_2^w) - \D \Phi(w)\cdot(\xi_1^w - \xi_2^w)\Big)\Big|_{\delta}}_{=: \Lambda_3} \D \mu(w). 
\end{align*}
Let us control these terms one by one. First, by Proposition \ref{propestimchainrule},
\begin{align*}
\Lambda_1 &\leq |\DD\,( \Phi(w +  u^w \!\!+ \xi_1^w) - \Phi(w)) |_{\delta} \| \pp_1 - \pp_2 \|_{\delta} \\
&\hspace{4cm}+ | \Phi(w +  u^w \!\!+ \xi_1^w) - \Phi(w) |_{\delta} \| \DD \,(\pp_1 - \pp_2) \|_{\delta}\\
&=|\DD \Phi(w +  u^w \!\!+ \xi_1^w) |_{\delta} \| \pp_1 - \pp_2 \|_{\delta} \\
&\hspace{4cm}+ | \Phi(w +  u^w \!\!+ \xi_1^w) - \Phi(w) |_{\delta} \| \DD \,(\pp_1 - \pp_2) \|_{\delta}.
\end{align*}
With \eqref{Phi-Phi}, \eqref{DPhi}, \eqref{linsolunitenorm}, \eqref{lingradsolunitenorm} and $C\eta + \| \pp_1, \pp_2  \|_{\delta} \leq r \leq r_0$, we obtain
\begin{equation}
\label{Lambda1}
\Lambda_1 \leq \Big\{ (C\eta + \| \DD \pp_1, \DD \pp_2\|_{\delta}) \| \pp_1 - \pp_2 \|_{\delta} + r  \DD \,(\pp_1 - \pp_2) \|_{\delta}\Big\} |\Phi|'(|w| + r_0).
\end{equation}
Then, still by Proposition \ref{propestimchainrule}, \eqref{linsolunitenorm}, \eqref{lingradsolunitenorm} and $C\eta + \| \pp_1, \pp_2  \|_{\delta} \leq r \leq r_0$,
\begin{align*}
\Lambda_2 &\leq r \Big| \DD \, \Big\{ \Phi(w +  u^w\!\! + \xi_1^w) - \Phi(w +  u^w\!\!+ \xi_2^w) \Big\}\Big|_{\delta}\\
&\qquad + (C\eta + \| \DD \pp_1, \DD \pp_2 \|_{\delta})\Big| \Phi(w +  u^w\!\! + \xi_1^w) - \Phi(w +  u^w\!\!+ \xi_2^w)\Big|_{\delta}.
\end{align*}
Using \eqref{DPhi-Phi} and \eqref{Phi-Phi}, we get
\begin{align}
\notag\Lambda_2 &\leq r  \|\DD \, (\pp_1 - \pp_2)\|_{\delta} |\Phi|'(|w| + r_0) \\
\notag&\qquad + r_0\| \pp_1 - \pp_2\|(C\eta + \| \DD \pp_1 , \DD \pp_2 \|_{\delta}) |\Phi|''(|w| + r_0)) \\
\notag &\qquad + (C\eta + \| \DD \pp_1 , \DD \pp_2 \|_{\delta})\| \pp_1 - \pp_2 \|_{\delta} |\Phi|'(|w| + r_0)\\
\notag &=(C\eta + \| \DD \pp_1 , \DD \pp_2 \|_{\delta})\| \pp_1 - \pp_2 \|_{\delta} \Big\{ |\Phi|'(|w| + r_0) + r_0 |\Phi|''(|w| + r_0) \Big\} \\
\label{Lambda2} &\qquad + r \|\DD \, (\pp_1 - \pp_2)\|_{\delta} |\Phi|'(|w| + r_0).
\end{align}
For $\Lambda_3$, we just have to use \eqref{DPhi-Phi-dPhi}. We get
\begin{equation}
\label{Lambda3}
\Lambda_3 \leq \Big\{ r \|\DD \, (\pp_1 - \pp_2) + (C\eta + \| \DD \pp_1, \DD \pp_2 \|_{\delta}) \| \pp_1 - \pp_2 \| \Big\} |\Phi|''(|w| + r_0).
\end{equation}

The result is obtained by integrating \eqref{Lambda1}, \eqref{Lambda2} and \eqref{Lambda3} with respect to $w$ and by using \eqref{intdPhiseriesbis} and \eqref{intd2Phiseriesbis}.
\end{proof}
In the end, the results of this subsection can be summarized in the following way.
\begin{Thm}[Conclusion of the subsection]
Take $\Gamma > \gamma_0$, $\gamma_1$ as in \eqref{defgamma1} and $C$ the constant given by Theorem \ref{sharpestimate} with $\gamma = \gamma_1$. Equation \eqref{duhamelabstracteq} can be rewritten as
\begin{equation}
\label{abstractintegraleq}
\pp(t) = \int_0^t S_{t-s} \mathcal{F}_{\eta}(s, \pp(s)) \D s,
\end{equation}
with $S$ defined in \eqref{defS} and satisfying \eqref{estimS}, and $\mathcal{F}_{\eta}$ satisfying the following estimates for some $K>0$ only depending on $M$, $r_0$ and $\Gamma$.
\begin{itemize}
\item For all $t \leq \delta_0 / \gamma_0$ and for all $\eta$ such that $C\eta \leq r_0$, we have
\begin{equation}
\label{estimbasisabstract}
\|\mathcal{F}_{\eta}(t,0)\|_{\delta_0 - \gamma_0 t} \leq K \eta^2.
\end{equation}
\item For all $t \leq \delta_0/\gamma_1$, for all $\delta \leq \delta_0 - \gamma_1 t$, for all $\eta$ such that $C \eta \leq r_0$, and for all $\pp_1$ and $\pp_2$ such that
\[
C \eta + \| \pp_1, \pp_2 \|_{\delta} \leq r \leq r_0,
\]
we have
\begin{equation}
\label{estiminductionabstract}
\begin{aligned}
\big\|\mathcal{F}_{\eta}&(t, \pp_1) - \mathcal{F}_{\eta}(t, \pp_2)\big\|_{\delta} \\
&\leq K \Big\{ \Big( C\eta + \| \DD \pp_1, \DD \pp_2 \|_{\delta} \Big)\| \pp_1 - \pp_2 \|_{\delta} + r \| \DD\, (\pp_1 - \pp_2) \|_{\delta} \Big\}.
\end{aligned}
\end{equation}
\item For any $\pp \in \XX_{\delta}\cap \boldsymbol{L}_0$, $\delta \geq 0$, for all $\eta >0$ and for all $t \leq \delta_0 / \gamma_1$,
\begin{equation}
\label{FetaL0}
\mathcal{F}_{\eta}(t, \pp) \in \boldsymbol{L}_0.
\end{equation}
\end{itemize}
\end{Thm}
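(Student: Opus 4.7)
The plan is to read off $\mathcal{F}_\eta$ from the Duhamel formula \eqref{duhamelabstracteq}, where for $\pp = (\sigsig, \xixi)$,
\[
\mathcal{F}_\eta(s, \pp) := \begin{bmatrix} -\Div\bigl((\rr(s) + \sigsig)(\uu(s) + \xixi)\bigr) \\[2pt]
-\nabla W_{\rr,\uu}[\sigsig, \xixi] - \bigl[(\uu(s) + \xixi)\cdot\nabla\bigr](\uu(s) + \xixi) \end{bmatrix},
\]
and then to assemble the three bullets by compiling the estimates already proved in the subsection. The dependence on $\eta$ enters only through $(\rr(s), \uu(s)) = S_s(\rr_0, \uu_0)$, which satisfies \eqref{linsolunitenorm}--\eqref{lingradsolunitenorm} precisely because $\eta = \|\DD\rr_0, \DD\uu_0\|_{\delta_0}$.

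For the basis bound \eqref{estimbasisabstract}, I evaluate $\mathcal{F}_\eta(t, 0)$, which reduces to the three terms $-\Div(\rr(t)\uu(t))$, $-\nabla W_{\rr,\uu}[0,0](t)$ and $-(\uu(t)\cdot\nabla)\uu(t)$. Summing \eqref{estimbasiseasyterms} and \eqref{estimbasisforcefield} yields the desired $K\eta^2$ bound, with $K$ chosen as the sum of the constants appearing there. For the Lipschitz-type bound \eqref{estiminductionabstract}, I subtract $\mathcal{F}_\eta(t,\pp_1)$ and $\mathcal{F}_\eta(t,\pp_2)$ componentwise: the two ``easy'' pieces are controlled by \eqref{estiminductioneasyterms} and the force-field piece by \eqref{estiminductionforcefield}. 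Thanks to the standing bound $C\eta + \|\pp_1, \pp_2\|_\delta \leq r$, all three contributions fit the common template
\[
K\bigl\{(C\eta + \|\DD\pp_1, \DD\pp_2\|_\delta)\|\pp_1 - \pp_2\|_\delta + r\,\|\DD(\pp_1 - \pp_2)\|_\delta\bigr\}
\]
after enlarging $K$ once and for all. The admissible time range is the one already inherited from the earlier propositions, namely $t \leq \delta_0/\gamma_1$.

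The point I would single out as the only one not of pure bookkeeping is the preservation property \eqref{FetaL0}. The first slot of $\mathcal{F}_\eta(t,\pp)$ is a pure divergence, hence has zero mean over $\T^d$ at every $w$. In the second slot, $\nabla W_{\rr,\uu}[\sigsig,\xixi]$ is visibly a gradient. By the third item of Theorem \ref{sharpestimate}, $(\rr(t), \uu(t)) \in \boldsymbol{L}_0$, and by hypothesis $\pp \in \boldsymbol{L}_0$, so for every $w$ both $u^w(t)$ and $\xi^w$ are gradients and so is their sum. The key observation is then the pointwise identity
\[
(\nabla\varphi \cdot \nabla)\nabla\varphi = \nabla\!\Bigl(\tfrac{1}{2}|\nabla\varphi|^2\Bigr),
\]
valid for any scalar $\varphi$ by symmetry of second derivatives. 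Writing $u^w(t) + \xi^w = \nabla\varphi^w$, this turns the convective term $\bigl[(\uu(t) + \xixi)\cdot\nabla\bigr](\uu(t) + \xixi)$ into a family of gradients, and the sum of the two gradient contributions in the velocity slot remains a gradient. Hence $\mathcal{F}_\eta(t,\pp) \in \boldsymbol{L}_0$, and the theorem is established.
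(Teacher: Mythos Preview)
Your proof is correct and follows essentially the same approach as the paper: identify $\mathcal{F}_\eta$ from the Duhamel formula, combine \eqref{estimbasiseasyterms}, \eqref{estiminductioneasyterms}, \eqref{estimbasisforcefield}, \eqref{estiminductionforcefield} for the two quantitative bounds, and use the identity $(\nabla\varphi\cdot\nabla)\nabla\varphi = \tfrac{1}{2}\nabla|\nabla\varphi|^2$ for the $\boldsymbol{L}_0$ preservation. If anything, you are slightly more explicit than the paper in invoking the third item of Theorem~\ref{sharpestimate} to justify that $\uu(t)$ remains a gradient.
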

\begin{proof}
Looking at \eqref{duhamelabstracteq}, we see that
\[
\mathcal{F}_{\eta}(t, \pp) = \begin{bmatrix} -\Div\Big(( \rr(t) + \sigsig)( \uu(t) + \xixi)\Big) \\
-\nabla W_{\rr,\uu}[\pp] - \Big[( \uu(t) + \xixi) \cdot \nabla\Big]( \uu(t) + \xixi)  \end{bmatrix}
\]
Estimates \eqref{estimbasisabstract} and \eqref{estiminductionabstract} are obvious consequences of \eqref{estimbasiseasyterms}, \eqref{estiminductioneasyterms}, \eqref{estimbasisforcefield} and \eqref{estiminductionforcefield}.

To see \eqref{FetaL0}, remark that as for all $w \in \R^d$, $u^w$ and $\xi^w$ are gradients, 
\[
\Big[( \uu(t) + \xixi) \cdot \nabla\Big]( \uu(t) + \xixi) = \frac{1}{2} \nabla \Big( | \uu(t) + \xixi|^2\Big).
\]
\end{proof}

\subsection{A Cauchy-Kovalevskaia theorem}
\label{cauchykovalevskaiatheorem}
We want to derive from the estimates \eqref{estimS}, \eqref{estimbasisabstract} and \eqref{estiminductionabstract} and from property \eqref{FetaL0} an existence result for equation \eqref{abstractintegraleq}.

 The theorem is the following.
\begin{Thm}
\label{cauchykovalevskaia}
For all $\Gamma > \gamma_0$ there exists $\eps_0>0$ only depending on $r_0$, $M$ and $\Gamma$ (and not $\delta_0$) such that if
\begin{equation}
\label{conditionetadelta0}
\eta \leq \eps_0,
\end{equation}
then:
\begin{itemize}
\item  equation \eqref{abstractintegraleq} admits a solution $\pp$ for $t \in [0, \delta_0/\Gamma]$ with values in $\boldsymbol{L}_0$,
\item for all $\delta < \delta_0$, $\pp$ is continuous from $[0, (\delta_0 - \delta)/ \Gamma]$ to $\XX_{\delta}$,
\item there holds:
\begin{equation}
\label{estimsolution}
\sup_{t \in [0, \delta_0 / \Gamma]} \| \pp(t) \|_{\delta_0 - \Gamma t} \leq \frac{\eta^2}{\eps_0}.
\end{equation}
\end{itemize}

Moreover, this solution is unique in the class of analytic solutions: if $\qq$ is a solution to \eqref{abstractintegraleq} which is continuous from $[0,T]$ to $\XX_{\delta}$ for some $T \leq  \delta_0 / \Gamma$ and $\delta>0$, then for all $t \in [0,T]$, $\qq(t) = \pp(t)$.
\end{Thm}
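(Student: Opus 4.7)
The plan is a Picard iteration in the spirit of Caflisch~\cite{caflisch1990simplified}, in which the two distinct losses---radius of analyticity from the semigroup $S_{t-s}$ and one spatial derivative from the nonlinearity $\mathcal{F}_\eta$---are controlled simultaneously through the interplay between \eqref{estimS}, \eqref{estiminductionabstract}, Proposition~\ref{propestimnabla} and the exponential gain furnished by Lemma~\ref{lemdeltatodelta'}.

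\textbf{Setup.} Fix $\Gamma > \gamma_0$, let $\gamma_1 = (2\gamma_0+\Gamma)/3$ as in \eqref{defgamma1}, and let $C$ be the constant from Theorem~\ref{sharpestimate} with $\gamma = \gamma_1$. Starting from $\pp^0 \equiv 0$, define inductively
\[
\pp^{k+1}(t) := \int_0^t S_{t-s}\,\mathcal{F}_\eta(s,\pp^k(s))\,\D s,\qquad t \in [0,\delta_0/\Gamma].
\]
By \eqref{FetaL0} and the third bullet of Theorem~\ref{sharpestimate}, each iterate stays in $\boldsymbol{L}_0$; this is the pivotal point, since it unlocks Lemma~\ref{lemdeltatodelta'} at every step. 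In particular $\mathcal{F}_\eta(s,0) \in \boldsymbol{L}_0$ as well, because its density component is a divergence and, since $\uu(s)$ is a gradient, its velocity component is the gradient of $\tfrac12|\uu(s)|^2$ minus a gradient potential.

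\textbf{Main estimates.} For $0 \leq s \leq t \leq \delta_0/\Gamma$ introduce the intermediate radius
\[
\delta_1(s,t) := (\delta_0 - \Gamma s) - (\Gamma - \gamma_1)(t-s) < \delta_0 - \Gamma s,
\]
so that \eqref{estimS} yields $\|S_{t-s}\,g\|_{\delta_0-\Gamma t} \leq C\|g\|_{\delta_1(s,t)}$. Assuming by induction that $\|\pp^k(s)\|_{\delta_0-\Gamma s} \leq R := \eta^2/\eps_0$ with $C\eta + R \leq r_0$, Lemma~\ref{lemdeltatodelta'} supplies the crucial exponential gain $\|\pp^k(s)\|_{\delta_1(s,t)} \leq R\,e^{-(\Gamma-\gamma_1)(t-s)}$, while Proposition~\ref{propestimnabla} applied between the radii $\delta_0 - \Gamma s$ and $\delta_1(s,t)$ gives the Cauchy bound $\|\DD\pp^k(s)\|_{\delta_1(s,t)} \leq R/((\Gamma-\gamma_1)(t-s))$. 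Feeding these into \eqref{estimbasisabstract}--\eqref{estiminductionabstract} bounds the integrand $\|\mathcal{F}_\eta(s,\pp^k(s))\|_{\delta_1(s,t)}$, and \eqref{estimS} integrates it to give a candidate bound on $\|\pp^{k+1}(t)\|_{\delta_0-\Gamma t}$.

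\textbf{The main obstacle} is two-fold and must be handled without letting $\eps_0$ depend on $\delta_0$. First, the linear piece $\|\mathcal{F}_\eta(s,0)\|_{\delta_1(s,t)} \lesssim K\eta^2$ naively integrated over $[0,\delta_0/\Gamma]$ would produce a $\delta_0$-dependent bound of order $\eta^2\delta_0/\Gamma$; this is precisely what the exponential from Lemma~\ref{lemdeltatodelta'} cures, replacing it by an $O(\eta^2/(\Gamma-\gamma_1))$ contribution independent of $\delta_0$. Second, the Cauchy estimate on $\DD\pp^k$ carries a non-integrable $(t-s)^{-1}$ singularity; this is absorbed by working in a Caflisch-type weighted norm in the scale $(\XX_\delta)$---roughly of the form $N(\pp) := \sup\bigl\{\|\pp(t)\|_\delta\,(\delta_0 - \Gamma t - \delta)\;:\;0\leq t \leq \delta_0/\Gamma,\;0<\delta<\delta_0-\Gamma t\bigr\}$---in which the relevant space-time integrals become finite and quadratic in $N(\pp^k)$. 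Choosing $\eps_0$ small enough in terms of $r_0$, $M$ and $\Gamma$ alone (this is where the independence from $\delta_0$ is verified) then makes $\pp \mapsto \mathcal{T}\pp$ a contraction on a ball of radius $R$ in this norm. The fixed point $\pp$ satisfies the boundary bound \eqref{estimsolution} via one further use of \eqref{estimS} combined with Lemma~\ref{lemdeltatodelta'} applied to the Duhamel formula itself.

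\textbf{Continuity and uniqueness.} Continuity of $t \mapsto \pp(t)$ into $\XX_\delta$ on $[0,(\delta_0-\delta)/\Gamma]$ for any $\delta < \delta_0$ follows from the Duhamel representation, the continuity of $S$ (second bullet of Theorem~\ref{sharpestimate}), and dominated convergence applied to the time-integral. Uniqueness among analytic solutions is a standard Grönwall argument: two solutions $\pp,\qq$ continuous into some $\XX_\delta$ satisfy via \eqref{estiminductionabstract}---applied at a slightly smaller radius $\delta' < \delta$ so that Proposition~\ref{propestimnabla} handles the derivative---a closed linear integral inequality for $\|\pp(t)-\qq(t)\|_{\delta'}$, which forces $\pp \equiv \qq$.
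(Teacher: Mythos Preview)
Your overall strategy---Picard iteration with a Caflisch-type weighted norm, propagation of the $\boldsymbol{L}_0$ structure, and Lemma~\ref{lemdeltatodelta'} to secure $\delta_0$-independence---matches the paper's. But the sketch hand-waves the one step the paper singles out as new, and the specific norm you propose does not close the iteration.

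The weighted norm $N(\pp)=\sup_{t,\delta}(\delta_0-\Gamma t-\delta)\,\|\pp(t)\|_\delta$ is inadequate on two counts. First, it gives no control at the boundary $\delta=\delta_0-\Gamma t$, so it cannot deliver \eqref{estimsolution}. Second, it bounds $\pp$ only; the derived bound on $\|\DD\pp^k(s)\|_{\delta}$ via Proposition~\ref{propestimnabla} is of order $(\delta_0-\Gamma s-\delta)^{-2}$, and when paired with the $N$-bound on $\|\pp^k-\pp^{k-1}\|_\delta$ in the cross-term $(C\eta+\|\DD\pp^k,\DD\pp^{k-1}\|_\delta)\,\|\pp^k-\pp^{k-1}\|_\delta$ of \eqref{estiminductionabstract} it produces a cubic singularity that the weight cannot absorb after integration. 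Your direct bound $\|\DD\pp^k(s)\|_{\delta_1(s,t)}\le R/((\Gamma-\gamma_1)(t-s))$ has the same defect in the term $r\,\|\DD(\pp_1-\pp_2)\|$: the $(t-s)^{-1}$ is not integrable, and the exponential from Lemma~\ref{lemdeltatodelta'} multiplies $\|\pp\|$, not $\|\DD\pp\|$. The paper's fix is the two-piece norm
\[
\VERT\pp\VERT=\sup_t\|\pp(t)\|_{\delta_0-\Gamma t}\;+\;\sqrt{\delta_0}\,\sup_{t,\delta}(\delta_0-\delta-\Gamma t)^{1/2}\|\DD\pp(t)\|_\delta,
\]
in which the exponent $1/2$ on the derivative piece makes every time-integral that arises of type $\int(\,\cdot\,)^{-1/2}$ or $\int(\,\cdot\,)^{-3/2}$ (both convergent), and the prefactor $\sqrt{\delta_0}$ forces the outcome to scale like $\sqrt{t/\delta_0}\le\Gamma^{-1/2}$---this is precisely what makes $\eps_0$ independent of $\delta_0$. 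The paper also uses a second intermediate rate $\gamma_2=(\gamma_0+2\Gamma)/3$: the estimates \eqref{estimbasisabstract}--\eqref{estiminductionabstract} on $\mathcal{F}_\eta$ live at radius $\delta_0-\gamma_1 s$, while the semigroup in the iteration is propagated at rate $\gamma_2>\gamma_1$; a single intermediate rate leaves no slack between these two losses.

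Finally, your uniqueness argument by ``standard Gr\"onwall'' does not survive the derivative loss in $\mathcal{F}_\eta$. The paper instead reruns the contraction estimate on a shrunken scale $\iota_0<\delta_0$, exploiting that $\eps_0$ is independent of $\delta_0$ to localize near the first time two analytic solutions could separate.
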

\begin{Rem}
The proof also gives for free
\begin{equation}
\label{estimsolutionbis}
\sup_{t \in [0, \delta_0 / \Gamma)} \sup_{\delta < \delta_0 - \Gamma t} \sqrt{\delta_0} \left( \delta_0 - \delta - \Gamma t \right)^{1/2} \big\| \DD \pp(t) \big\|_{\delta} \leq \frac{\eta^2}{\eps_0}.
\end{equation}
\end{Rem}
\begin{proof}
Take $\Gamma > \gamma_0$, and define as previously $\gamma_1$ by \eqref{defgamma1}. As before, $C$ will stand for the constant appearing in Theorem \ref{sharpestimate} with $\gamma = \gamma_1$. Define as well
\[
\gamma_2 := \frac{1}{3}\gamma_0 + \frac{2}{3} \Gamma = \gamma_0 + \frac{2}{3} (\Gamma - \gamma_0).
\]
 In the whole proof, equation \eqref{estimS} will be used with $\gamma_2$, and the corresponding constant will be considered as a function of $\Gamma$. Also, in the whole proof, $K$ will denote a large constant which will be likely to grow from line to line, but only depending on $r_0$, $M$ and $\Gamma$.

First we define the following norm:
\begin{equation}
\label{deftriplenorm}
\VERT \pp \VERT := \sup_{t \in [0, \delta_0 / \Gamma]} \| \pp(t) \|_{\delta_0 - \Gamma t} + \sqrt{\delta_0} \sup_{\substack{t < \delta_0 / \Gamma \\ \delta < \delta_0 - \Gamma t}}  \big( \delta_0 - \delta - \Gamma t \big)^{1/2} \| \DD \pp(t) \|_{\delta}.
\end{equation}
This type of norm has been used for the first time by Caflish in \cite{caflisch1990simplified}. Here, we have chosen the exponent in the derivative part equal to $1/2$ and we have added a factor $\sqrt{\delta_0}$. All these choices are made to obtain $\eps_0$ independent of $\delta_0$ as the following computations will show.

For a given $\eta$, we introduce the scheme:
\begin{gather*}
\pp_0 = 0, \\ 
\forall n \geq 0, \quad \pp_{n+1}(t) := \int_0^t S_{t-s} \mathcal{F}_{\eta}(s, \pp_n(s)) \D s.
\end{gather*}

First of all, $0 \in \boldsymbol{L}_0$. Moreover, if $\pp \in \boldsymbol{L}_0$ is sufficiently regular, and if $0 \leq s \leq t$, then $\mathcal{F}_{\eta}(s, \pp) \in \boldsymbol{L}_0$ by \eqref{FetaL0}, and so $S_{t-s} \mathcal{F}_{\eta}(s, \pp) \in \boldsymbol{L}_0$ by Theorem \ref{sharpestimate}. Consequently, it is easy to prove by induction that for all $n\in\N$ and all $t \geq 0$ such that the definition of $\pp_n(t)$ makes sense, then $\pp_n(t) \in \boldsymbol{L}_0$. In particular, $\pp_n(t)$ will satisfy \eqref{zeromass} and, we will be able to use Lemma \ref{lemdeltatodelta'}. 

Now we suppose that \eqref{conditionetadelta0} holds and we will show that as soon as $\eps_0$ is small enough, then the scheme will converge to a certain $\pp$ for which we will give an estimate.\\
\underline{Step one:} estimating $\pp_1$.

 Let us begin the computations by estimating $\VERT \pp_1 \VERT$. If $t \leq \delta_0 / \Gamma$,
\begin{align*}
\| \pp_1(t) \|_{\delta_0 - \Gamma t} &\leq \int_0^t \| S_{t-s} \mathcal{F}_{\eta} (s, 0) \|_{\delta_0 - \Gamma t} \D s\\
&\leq K \int_0^t \| \mathcal{F}_{\eta} (s, 0) \|_{\delta_0 - ( \Gamma - \gamma_2)t - \gamma_2 s} \D s &\mbox{by \eqref{estimS},}\\
&\leq K \int_0^t\!\frac{ \| \mathcal{F}_{\eta} (s, 0) \|_{\delta_0 - \gamma_1 s}}{ \exp\big((\delta_0 - \gamma_1 s) - (\delta_0 - (\Gamma - \gamma_2) t - \gamma_2 s\big)} \D s  &\mbox{by Lem. \ref{lemdeltatodelta'}},\\
&\leq \frac{K \eta^2}{ \exp\big((\Gamma- \gamma_2) t \big)} \int_0^t \exp\big(- (\gamma_2- \gamma_1)s \big) \D s &\mbox{by \eqref{estimbasisabstract},} \\ 
&\leq \frac{K}{\gamma_2 - \gamma_1} \frac{ \eta^2}{\exp\big((\Gamma - \gamma_2 ) t\big)}.
\end{align*}
Putting the $\gamma_2 - \gamma_1$ in the constant $C$ and using $\Gamma > \gamma_2$, we get
\begin{equation}
\label{estimp1}
\| \pp_1(t) \|_{\delta_0 - \Gamma t} \leq C \eta^2.
\end{equation}

On the other hand, choose $t < \delta_0 / \Gamma$ and $\delta < \delta_0 - \Gamma t$. Taking for all $s \in [0,t]$
 \[
\delta'(s) := \frac{\delta + \gamma_2 (t-s) + \delta_0 - \gamma_1 s}{2},
\]
 we get
\begin{align*}
\| \DD \pp_1(t) \|_{\delta} &\leq \int_0^t \| \DD S_{t-s} \mathcal{F}_{\eta} (s, 0) \|_{\delta} \D s\\
&\leq K \int_0^t \| \DD \mathcal{F}_{\eta} (s, 0) \|_{\delta + \gamma_2(t-s)} \D s &\mbox{by Lem. \ref{lemDS=SD}},\\
&\leq K \int_0^t \frac{\| \mathcal{F}_{\eta} (s, 0) \|_{\delta'(s)}}{\delta'(s) - \delta - \gamma_2(t-s)} \D s &\mbox{by Prop. \ref{propestimnabla}},\\
&\leq K \int_0^t \frac{\| \mathcal{F}_{\eta} (s, 0) \|_{\delta_0 - \gamma_1 s}}{\delta'(s) - \delta - \gamma_2(t-s)} e^{-(\delta_0 - \gamma_1 s - \delta'(s))}\D s &\mbox{by Lem. \ref{lemdeltatodelta'}},\\
&\leq K \eta^2 \int_0^t \frac{e^{-(\delta_0 - \delta - \gamma_2 t + (\gamma_2 - \gamma_1)s)/2}}{\delta_0 - \delta - \gamma_2 t + (\gamma_2 - \gamma_1)s} \D s &\mbox{by \eqref{estimbasisabstract}},\\
&\leq K \eta^2 e^{-(\delta_0 - \delta - \gamma_2 t)/2} \int_0^t \frac{\D s}{\delta_0 - \delta - \gamma_2 t + (\gamma_2 - \gamma_1)s}\\
&= K \eta^2  e^{-(\delta_0 - \delta - \gamma_2 t)/2} \log\left( \frac{\delta_0 - \delta - \gamma_1 t}{\delta_0 - \delta - \gamma_2 t} \right).
\end{align*}
But remark that for $t \leq \delta_0/\Gamma$ and $\delta < \delta_0 - \Gamma t$,
\begin{align*}
 \sqrt{\delta_0}\big( &\delta_0 - \delta - \Gamma t \big)^{1/2}e^{-(\delta_0 - \delta - \gamma_2 t)/2} \log\left( \frac{\delta_0 - \delta - \gamma_1 t}{\delta_0 - \delta - \gamma_2 t} \right)\\
&\leq \delta_0 e^{-\delta_0 / 2}\sup_{\tilde{t} < 1 / \Gamma} \sup_{\tilde{\delta} < 1 - \Gamma \tilde{t}} \big( 1 - \tilde{\delta} - \Gamma \tilde{t} \big)^{1/2} e^{-(1 - \tilde{\delta} - \gamma_2 \tilde{t} )/2}\log\left( \frac{1 - \tilde{\delta} - \gamma_1 \tilde{t}}{1 - \tilde{\delta} - \gamma_2 \tilde{t}} \right),
\end{align*}
and that
\[
\sup_{\tilde{t} < 1 / \Gamma} \sup_{\tilde{\delta} < 1 - \Gamma \tilde{t}} \big( 1 - \tilde{\delta} - \Gamma \tilde{t} \big)^{1/2}e^{-(1 - \tilde{\delta} - \gamma_2 \tilde{t} )/2}  \log\left( \frac{1 - \tilde{\delta} - \gamma_1 \tilde{t}}{1 - \tilde{\delta} - \gamma_2 \tilde{t}} \right)
\]
only depends on $\Gamma$. So we can include this factor in the constant $C$ and because $\delta_0 e^{-\delta_0/2} \leq 2$, we get
\begin{equation}
\label{estimdp1}
 \sqrt{\delta_0} \big( \delta_0 - \delta - \Gamma t \big)^{1/2} \| \DD \pp_1(t) \|_{\delta} \leq K \eta^2.
\end{equation}
Gathering \eqref{estimp1} and \eqref{estimdp1}, we conclude that
\begin{equation*}
\VERT \pp_1 \VERT \leq K \eta^2.
\end{equation*}
\underline{Step two:} estimating $\pp_n-\pp_{n-1}$ by induction.

Now, we prove by induction that when $\eps_0$ is sufficiently small, then for all $n \geq 1$,
\begin{equation}
\label{inductionstep}
\VERT \pp_{n} - \pp_{n-1} \VERT \leq 2^{-(n-1)} \VERT \pp_1 \VERT,
\end{equation}
and
\begin{equation}
\label{pnleqeta}
\VERT \pp_n \VERT \leq \eta.
\end{equation}
\underline{Basis step}. When $n = 1$, \eqref{inductionstep} is automatically true, and as under condition \eqref{conditionetadelta0},
 \[
 \VERT \pp_1 \VERT \leq K \eta^2 \leq K \eps_0 \eta,
 \]
as soon as 
 \begin{equation}
 \label{condition1eps0}
 K \eps_0 \leq \frac{1}{2},
 \end{equation}
 we have
 \begin{equation}
 \label{p1leqeta/2}
  \VERT \pp_1 \VERT \leq \frac{\eta}{2}.
 \end{equation}
 Equation \eqref{pnleqeta} is then trivially true for $n=1$.\\
\underline{Induction step}. If the result is true for $k=1, \dots, n$, let us estimate the norm $\VERT \pp_{n+1} - \pp_n \VERT$. First of all, as soon as 
\begin{equation}
\label{condition2eps0}
\eps_0 \leq \frac{r_0}{C+1},
\end{equation}
then for all $s \leq \delta_0 / \Gamma$, 
\begin{equation*}
C \eta + \| \pp_n(s), \pp_{n-1}(s) \|_{\delta_0 - \Gamma s} \leq (C+1) \eta \leq r_0,
\end{equation*}
so we will be able to use \eqref{estiminductionabstract} with $r := (C+1) \eta$. Take $t \leq \delta_0 / \Gamma$. By setting for all $s \in [0,t]$
\[
\delta(s) := \delta_0 -(\Gamma - \gamma_2 ) t- \gamma_2 s ,
\]
we have by \eqref{estimS}, \eqref{estiminductionabstract}, \eqref{pnleqeta}, \eqref{deftriplenorm} and Lemma \ref{lemdeltatodelta'}:
\begin{align*}
\| \pp&_{n + 1}(t) - \pp_n(t) \|_{\delta_0 - \Gamma t} \\
& \leq \int_0^t \| S_{t-s} \big\{ \mathcal{F}_{\eta}(s, \pp_n(s)) - \mathcal{F}_{\eta}(s, \pp_{n-1}(s)) \big\} \|_{\delta_0 - \Gamma t} \D s \\
&\leq K \int_0^t \|\mathcal{F}_{\eta}(s, \pp_n(s)) - \mathcal{F}_{\eta}(s, \pp_{n-1}(s))  \|_{\delta(s)} \D s \\
&\leq K \int_0^t \Big\{ \Big( C\eta + \| \DD \pp_{n-1}(s), \DD \pp_n(s) \|_{\delta(s)}\Big)\| \pp_n(s) - \pp_{n-1}(s) \|_{\delta( s)}\\
&\hspace{5cm} + (C+1) \eta \| \DD \, (\pp_n(s) - \pp_{n-1}(s)) \|_{\delta(s)} \Big\} \D s \\
&\leq K \int_0^t\bigg\{ \bigg( C \eta + \frac{\eta}{\sqrt{\delta_0} \big( \delta_0 - \delta(s) - \Gamma s\big)^{1/2}} \bigg) \frac{\VERT \pp_n - \pp_{n-1} \VERT}{\exp\big(\delta_0 - \Gamma s - \delta(s)\big)} \\
&\hspace{5cm} + (C+1) \eta \frac{\VERT \pp_n - \pp_{n-1} \VERT}{\sqrt{\delta_0} \big( \delta_0 - \delta(s) - \Gamma s\big)^{1/2}} \bigg\} \D s \\
&\leq K \VERT \pp_n - \pp_{n-1} \VERT \eta \int_0^t \hspace{-3pt} \left\{\frac{C}{\exp\big(\delta_0 - \Gamma s - \delta(s)\big)} \hspace{-2pt} + \hspace{-2pt} \frac{C+2}{\sqrt{\delta_0}\big( \delta_0 - \Gamma s - \delta(s) \big)^{1/2}}  \right\}\! \D s.
\end{align*}
But using 
\[
\delta_0 - \Gamma s - \delta(s) = (\Gamma - \gamma_2) t - (\Gamma - \gamma_2)s,
\]
we get
\begin{align*}
\| \pp_{n + 1}(t) - \pp_n(t) \|_{\delta_0 - \Gamma t} &\leq K \VERT \pp_n - \pp_{n-1} \VERT \eta \left\{ C + (C+2)\sqrt{\frac{t}{\delta_0}}\right\} \\
&\leq K \VERT \pp_n - \pp_{n-1} \VERT \eta. 
\end{align*}
In particular, 
\begin{equation}
\label{estiminduction1}
\| \pp_{n + 1}(t) - \pp_n(t) \|_{\delta} \leq \frac{1}{4} \VERT \pp_n - \pp_{n-1} \VERT
\end{equation}
as soon as
\begin{equation}
\label{condition3eps0}
K \eps_0 \leq \frac{1}{4}.
\end{equation}
On the other hand, if $t < \delta_0 / \Gamma$ and $\delta < \delta_0 - \Gamma t$, by using Lemma \ref{lemDS=SD},
\[
\| \DD\,(\pp_{n + 1}(t) - \pp_n(t)) \|_{\delta} \leq K \int_0^t \| \DD \, ( \mathcal{F}_{\eta}(s, \pp_n(s)) - \mathcal{F}_{\eta}(s, \pp_{n-1}(s)))\|_{\delta + \gamma_2 (t-s)} \D s.
\]
We get rid of the $\DD$ by using Proposition \ref{propestimnabla} with for all $s \in [0,t]$,
\[
\delta(s) := \frac{\delta + \gamma_2(t-s)+ \delta_0 - \Gamma s}{2}, 
\]
With this choice, for all $s \in [0,t]$,
\begin{equation}
\label{Delta(s)}
\Delta(s) := \delta(s) - \delta - \gamma_2 (t-s) = \delta_0 - \Gamma s - \delta(s) = \frac{\delta_0 - \delta - \gamma_2 t -(\Gamma - \gamma_2) s }{2}.
\end{equation}
Consequently,
\[
\| \DD\,(\pp_{n + 1}(t) - \pp_n(t)) \|_{\delta} \leq K \int_0^t \frac{\|  \mathcal{F}_{\eta}(s, \pp_n(s)) - \mathcal{F}_{\eta}(s, \pp_{n-1}(s))\|_{\delta(s)}}{\Delta(s)} \D s.
\]
By the exact same estimations of $\|  \mathcal{F}_{\eta}(s, \pp_n(s)) - \mathcal{F}_{\eta}(s, \pp_{n-1}(s))\|_{\delta(s)}$ as before,
\begin{align*}
\| \DD\,(\pp&_{n + 1}(t) - \pp_n(t)) \|_{\delta} \\
&\leq K \VERT \pp_n - \pp_{n-1} \VERT \eta \int_0^t  \left\{\frac{C}{\Delta(s) \exp\big(\Delta(s)\big)}  + \frac{C+2}{\sqrt{\delta_0} \Delta(s)^{3/2} }  \right\} \D s.
\end{align*}
We just have to use
\[
\Delta(s) \geq \frac{\delta_0 - \delta - \Gamma t}{2}
\]
to get the bound
\begin{align*}
\int_0^t \frac{\D s}{ \Delta(s) \exp\big( \Delta(s) \big)} &\leq  \exp\left( - \frac{\delta_0 - \delta - \Gamma t}{2} \right) \int_0^t \frac{\D s}{ \Delta(s)} \\
&\leq 2 \exp\left( - \frac{\delta_0 - \delta - \Gamma t}{2} \right) \int_0^t \frac{\D s}{ \delta_0 - \delta - \gamma_2 t - (\Gamma - \gamma_2)s } \\
&\leq 2 \exp\left( - \frac{\delta_0 - \delta - \Gamma t}{2} \right) \log\left( \frac{\delta_0 - \delta - \gamma_2 t}{\delta_0 - \delta - \Gamma t} \right).
\end{align*}
And besides, by \eqref{Delta(s)},
\[
\int_0^t \frac{\D s}{\Delta(s)^{3/2}} \leq \frac{4 \sqrt{2}}{\Gamma - \gamma_2} \frac{1}{\big(  \delta_0 - \delta - \Gamma t\big)^{1/2}}. 
\]
In the end,
\[
\sqrt{\delta_0} \big( \delta_0 - \delta - \Gamma t \big)^{1/2}\| \DD\,(\pp_{n + 1}(t) - \pp_n(t)) \|_{\delta} \leq K \VERT \pp_n - \pp_{n-1} \VERT \eta ( L + 1 ),
\]
with 
\begin{align*}
L &=  \sup_{\substack{t < \delta_0/ \Gamma \\ \delta < \delta_0 - \Gamma t}} \sqrt{\delta_0} \big( \delta_0 - \delta - \Gamma t \big)^{1/2} \exp\left( - \frac{\delta_0 - \delta - \Gamma t}{2} \right) \log\left( \frac{\delta_0 - \delta - \gamma_2 t}{\delta_0 - \delta - \Gamma t} \right)\\
&\leq \delta_0 \exp( - \delta_0/2) \sup_{\substack{\tilde{t} < 1/ \Gamma \\ \tilde{\delta} < 1 - \Gamma \tilde{t}}} \big( 1 - \tilde{\delta} - \Gamma \tilde{t} \big)^{1/2} \log\left( \frac{1 - \tilde{\delta} - \gamma_2 \tilde{t}}{1 - \tilde{\delta} - \Gamma \tilde{t}} \right)\\
&\leq 2 \sup_{\substack{\tilde{t} < 1/ \Gamma \\ \tilde{\delta} < 1 - \Gamma \tilde{t}}} \big( 1 - \tilde{\delta} - \Gamma \tilde{t} \big)^{1/2} \log\left( \frac{1 - \tilde{\delta} - \gamma_2 \tilde{t}}{1 - \tilde{\delta} - \Gamma \tilde{t}} \right)
\end{align*}
and 
\[
\sup_{\substack{\tilde{t} < 1 / \Gamma \\ \tilde{\delta} < 1 - \Gamma \tilde{t}}} \big( 1 - \tilde{\delta} - \Gamma \tilde{t} \big)^{1/2} \log\left( \frac{1 - \tilde{\delta} - \gamma_2 \tilde{t}}{1 - \tilde{\delta} - \Gamma \tilde{t}} \right)
\]
only depending on $\Gamma$. Thus,
\begin{align*}
\sqrt{\delta_0}\big( \delta_0 - \delta - \Gamma t\big)^{1/2}\| \DD\,(\pp_{n + 1}(t) - \pp_n(t)) \|_{\delta} &\leq K \eta \VERT \pp_n - \pp_{n-1} \VERT\\
&\leq K \eps_0 \VERT \pp_n - \pp_{n-1} \VERT.
\end{align*}
Once again, 
\begin{equation}
\label{estiminduction2}
\left( \frac{\delta_0 - \delta - \Gamma t}{\delta_0} \right)^{1/2} \| \DD \,(\pp_{n + 1}(t) - \pp_n(t)) \|_{\delta} \leq \frac{1}{4} \VERT \pp_n - \pp_{n-1} \VERT
\end{equation}
as soon as
\begin{equation}
\label{condition4eps0}
K \eps_0 \leq \frac{1}{4}.
\end{equation}
So under conditions \eqref{condition3eps0}, \eqref{condition4eps0}, then \eqref{estiminduction1} and \eqref{estiminduction2} hold and thus by summing them, 
\[
\VERT \pp_{n+1} - \pp_{n} \VERT \leq \frac{1}{2} \VERT \pp_n - \pp_{n-1} \VERT \leq 2^{-n}\VERT \pp_1 \VERT. 
\]
To get
\[
\VERT \pp_{n+1} \VERT \leq \eta,
\] 
it suffices to sum \eqref{inductionstep} for all the integers up to $n+1$ and \eqref{p1leqeta/2}. So we are done with our induction as soon as $\eps_0$ satisfies \eqref{condition1eps0}, \eqref{condition2eps0}, \eqref{condition3eps0}, \eqref{condition4eps0}.\\
\underline{Step three:} conclusion of the first and third point of the statement. 

We have shown that when $\eps_0$ is small enough, under condition \eqref{conditionetadelta0}, $(\pp_n)_{n \in \N}$ is a Cauchy sequence in the Banach space of functions having finite norm $||| \bullet |||$. So it converges to a certain $\pp$ which turns out to be a solution to equation \eqref{abstractintegraleq} and which belongs to $\boldsymbol{L}_0$ for all of its times of existence ($\boldsymbol{L}_0$ is closed even for the topology of distributions). Moreover, by summing \eqref{inductionstep} for all $n\geq 1$ and by using \eqref{estimp1} and \eqref{condition1eps0}, we get
\[
\VERT \pp \VERT \leq 2 \VERT \pp_1 \VERT \leq \frac{ \eta^2}{\eps_0}.
\]
Inequalities \eqref{estimsolution} and \eqref{estimsolutionbis} follow easily.\\
\underline{Step four:} continuity of $\pp$.

Here, we show the second point of the statement. First, because of \eqref{estiminductionabstract} (with $\pp_1 = \pp$ and $\pp_2 = 0$), \eqref{estimsolution} and \eqref{estimsolutionbis}, we can estimate $\mathcal{F}$: for all $t < \delta_0 / \Gamma$ and for all $\delta < \delta_0 - \Gamma t$, there is a constant $K$ such that 
\begin{equation}
\label{eq:estim_F_sol} 
\| \mathcal{F}_{\eta}(t, \pp(t)) \|_{\delta} \leq \frac{K}{(\delta_0 - \delta - \Gamma t)^{1/2}}.
\end{equation} 
(For this part of the proof, we do not need to be cautious with the dependences of $K$; \emph{a priori}, it depends on everything except for $t$ and $\delta$.) In what follows, $K$ will be a large constant growing from line to line.

Then, if $0 \leq t_1 < t_2 \leq \delta_0/\Gamma$, using \eqref{abstractintegraleq}, we get:
\begin{equation}
\label{eq:increment_p}
\pp(t_2) - \pp(t_1) = \int_0^{t_1} \{ S_{t_2 - s} - S_{t_1 - s}\} \mathcal{F}_{\eta}(s, \pp(s)) \D s + \int_{t_1}^{t_2} S_{t_2 - s} \mathcal{F}_{\eta}(s, \pp(s)) \D s
\end{equation}

Now, we chose $\delta< \delta_0$ and we suppose $0 \leq t_1 < t_2 \leq T := (\delta_0 - \delta)/\Gamma$. The goal is to show that the two terms in \eqref{eq:increment_p} tend to zero in $\XX_{\delta}$ when $t_2 - t_1$ goes to zero. The easiest term is the second one:
\begin{align*}
\left\| \int_{t_1}^{t_2} S_{t_2 - s} \mathcal{F}_{\eta}(s, \pp(s)) \D s \right\|_{\delta} &\leq \int_{t_1}^{t_2} \| S_{t_2 - s} \mathcal{F}_{\eta}(s, \pp(s))\|_{\delta} \D s\\
&\leq K  \int_{t_1}^{t_2} \| \mathcal{F}_{\eta}(s, \pp(s))\|_{\delta + \gamma_2 (t_2 - s)} \D s &\mbox{by \eqref{estimS}}, \\
&\leq K \int_{t_1}^{t_2} \frac{\D s}{(\delta_0 - \delta - \Gamma s - \gamma_2(t_2 - s))^{1/2}} &\mbox{by \eqref{eq:estim_F_sol}},\\
&= K\int_{t_1}^{t_2} \frac{\D s}{(\Gamma (T-s) - \gamma_2(t_2 - s))^{1/2}}\\
&\leq K\int_{t_1}^{t_2} \frac{\D s}{((\Gamma - \gamma_2) (T-s))^{1/2}}\\
&\leq K\int_{T - (t_2-t_1)}^{T} \frac{\D s}{((\Gamma - \gamma_2) (T-s))^{1/2}}\\
&\leq K \sqrt{t_2 - t_1},
\end{align*}
which tends to zero when $t_2 - t_1$ tends to zero.

We treat the first term in two stages: $t_2 \searrow t_1$ and then $ t_1 \nearrow t_2$. \\
\underline{The case $t_2 \searrow t_1$}. We have:
\begin{align*}
\int_0^{t_1} \{ S_{t_2 - s} - S_{t_1 - s}\} \mathcal{F}_{\eta}(s, \pp(s)) \D s &= \int_0^{t_1} \{ S_{t_2 - t_1} - \Id\}S_{t_1 - s} \mathcal{F}_{\eta}(s, \pp(s)) \D s \\
& = \{S_{t_2 - t_1} - \Id\} \pp(t_1).
\end{align*}
According to the continuity part of Theorem~\ref{sharpestimate}, this term tends to zero in $\XX_{\delta}$ as $t_2 \searrow t_1$ provided $\pp(t_1) \in \XX_{\delta'}$ for some $\delta' > \delta$. But this is the case as we know that $p(t_1) \in \XX_{\delta_0 - \Gamma t_1}$, and $\delta_0 - \Gamma t_1 > \delta_0 - \Gamma T = \delta$.\\
\underline{The case $t_1 \nearrow t_2$}. We also have:
\begin{equation*}
\int_0^{t_1} \{ S_{t_2 - s} - S_{t_1 - s}\} \mathcal{F}_{\eta}(s, \pp(s)) \D s = \int_0^{t_1} S_{t_1 - s}\{ S_{t_2 - t_1} - \Id\} \mathcal{F}_{\eta}(s, \pp(s)) \D s  
\end{equation*}
Consequently,
\begin{align}
\notag \Bigg\|\int_0^{t_1} \{ S_{t_2 - s} -& S_{t_1 - s}\} \mathcal{F}_{\eta}(s, \pp(s)) \D s  \Bigg\|_{\delta} \\
\notag & \leq \int_0^{t_1} \| S_{t_1 - s}\{ S_{t_2 - t_1} - \Id\} \mathcal{F}_{\eta}(s, \pp(s)) \|_{\delta}\D s\\
\notag &\leq \int_0^{t_1} \| \{ S_{t_2 - t_1} - \Id\} \mathcal{F}_{\eta}(s, \pp(s)) \|_{\delta + \gamma_2 (t_1 -s)}\D s &\mbox{by \eqref{estimS}},\\
\label{eq:t_1_to_t_2}&= \int_0^{t_2} \1_{s \leq t_1} \| \{ S_{t_2 - t_1} - \Id\} \mathcal{F}_{\eta}(s, \pp(s)) \|_{\delta + \gamma_2 (t_1 -s)}\D s.
\end{align}
On the one hand, if $s \in [0,t_2)$, $\mathcal{F}_{\eta}(s,\pp(s)) \in \XX_{\delta'}$ for all $\delta' < \delta_0 - \Gamma s$, and 
\begin{equation*}
\delta + \gamma_2 (t_2 - s) < \delta + \Gamma(t_2 - s) \leq \delta + \Gamma (T - s) = \delta_0 - \Gamma s.
\end{equation*}
So by the continuity part of Theorem~\ref{sharpestimate}
\begin{align*}
\1_{s \leq t_1} \| \{ S_{t_2 - t_1} - \Id\} \mathcal{F}_{\eta}(s, \pp(s)) \|_{\delta + \gamma_2 (t_1 -s)} &\leq \| \{ S_{t_2 - t_1} - \Id\} \mathcal{F}_{\eta}(s, \pp(s)) \|_{\delta + \gamma_2 (t_2 -s)}\\
&\hspace{-6pt}\underset{t_1 \nearrow t_2}{\longrightarrow} 0.
\end{align*}
On the other hand, for all $s \in [0,t_2)$, 
\begin{align*}
\1_{s \leq t_1} \|& \{ S_{t_2 - t_1} - \Id\} \mathcal{F}_{\eta}(s, \pp(s)) \|_{\delta + \gamma_2 (t_1 -s)} \\
& \leq \1_{s \leq t_1} \|  S_{t_2 - t_1}  \mathcal{F}_{\eta}(s, \pp(s)) \|_{\delta + \gamma_2 (t_1 -s)} + \1_{s \leq t_1} \|    \mathcal{F}_{\eta}(s, \pp(s)) \|_{\delta + \gamma_2 (t_1 -s)}\\
&\leq 2 \| \mathcal{F}_{\eta}(s, \pp(s)) \|_{\delta + \gamma_2 (t_2 -s)}\\
&\leq \frac{K}{ (\delta_0 - \delta - \Gamma s - \gamma_2 (t_2-s))^{1/2}}\\
&\leq \frac{K}{ ((\Gamma - \gamma_2)(T-s))^{1/2}}
\end{align*}
where we used \eqref{eq:estim_F_sol} to get the fourth line. This bound does not depend on $t_1$ and is summable between $0$ and $t_2 \leq T$. So the dominated convergence theorem applies, and we can pass to the limit $t_1 \nearrow t_2$ in \eqref{eq:t_1_to_t_2}.\\
\underline{Step five:} uniqueness.

Suppose $\eta \leq \eps_0$. The computations of the induction part of step two show that if $\qq_1$ and $\qq_2$ are two solutions to \eqref{abstractintegraleq} up to time $\delta_0 / \Gamma$ that satisfy
\begin{equation}
\label{eq:neighborhood}
\sup_{t \in [0,\delta_0 / \Gamma]} \| \qq_1(t), \qq_2(t) \|_{\delta_0 - \Gamma t} \leq \eta,
\end{equation}
then  
\begin{equation*}
\VERT \qq_2 - \qq_1 \VERT \leq \frac{1}{2} \VERT \qq_2 - \qq_1 \VERT.
\end{equation*} 
Consequently, in that case, $\qq_1(t) = \qq_2(t)$ for all $t \in [0, \delta_0 / \Gamma]$.

Moreover, we have seen that $\eps_0$ does not depend on $\delta_0$. So if we replace $\delta_0$ by some $\iota_0$ in \eqref{eq:neighborhood}, then the conclusion holds up to time $\iota_0 / \Gamma$.

Now take $\pp$ as built in step three and $\qq$, $T \leq \delta_0 / \Gamma$ and $\delta >0$ as in the statement of the theorem. Let $t_0 \in [0,T]$ be defined by:
\begin{equation*}
t_0 := \sup \{ t \in [0,T] \, | \, \pp(t) = \qq(t) \}.
\end{equation*}
Suppose by contradiction that $t_0 < T$. 

By the previous considerations, it suffices to find $\iota_0 \leq \delta_0$ such that $t_0 < \iota_0 / \Gamma \leq T$ and
\begin{equation}
\label{eq:estim_q}
\sup_{t \in [t_0,\iota_0 / \Gamma]} \| \qq \|_{\iota_0 - \Gamma t} \leq \eta.
\end{equation}
Indeed, if such a $\iota_0$ exists, then \eqref{eq:neighborhood} holds with $\iota_0$ instead of $\delta_0$, $\qq_1 := \pp$ and $\qq_2 := \qq$. (The estimate for $\pp$ and the estimate for $\qq$ before $t_0$ are due to $\iota_0 \leq \delta_0$ and $\VERT \pp \VERT \leq \eta$.) Hence, for all $t \leq \iota_0 / \Gamma$, $\qq(t) = \pp(t)$ and as $\iota_0 / \Gamma > t_0$, the maximality of $t_0$ is contradicted.

As $\pp(t_0) \in \boldsymbol{L}_0$, $\delta' \mapsto \| \pp \|_{\delta'}$ is an increasing function. So as
\begin{equation*}
\| \pp(t_0) \|_{\delta_0 - \Gamma t_0} \leq \eta,
\end{equation*}
for all $\iota_0 < \delta_0$,
\begin{equation*}
\| \qq(t_0) \|_{\iota_0 - \Gamma t_0} = \| \pp(t_0) \|_{\iota_0 - \Gamma t_0} < \eta.
\end{equation*}
In addition, if $\iota_0$ is sufficiently small, then $\iota_0 - \Gamma t_0 \leq \delta$, where $\delta$, given in the statement of the theorem, is such that $\qq$ is continuous in $\XX_{\delta}$. For such $\iota_0$ there is $t_1 > t_0$ such that for all $t \in [t_0,t_1]$,
\begin{equation*}
\| \qq(t) \|_{\iota_0 - \Gamma t} \leq \| \qq(t) \|_{\iota_0 - \Gamma t_0} \leq  \eta.
\end{equation*}
Up to taking an even lower $\iota_0 > t_0  \Gamma$ we can suppose furthermore that $\iota_0/ \Gamma \leq t_1$. For such a $\iota_0$, \eqref{eq:estim_q} holds and the result follows.
\end{proof} 
\subsection{Conclusion: proof of Theorem \ref{existenceofsolutions}}
Theorem \ref{existenceofsolutions} is a direct application of Theorem \ref{cauchykovalevskaia}. 

The fact that $(\rhorho(t), \vv(t))$ stays real is a consequence of \eqref{Areal}. Indeed, with this assumptions, on the one hand, with the notations of the proof of Lemma \ref{fourierlemma}:
\[
\forall n \in \Z^d, \quad \boldsymbol{A}_{-n} = \overline{\boldsymbol{A}_n} \quad \mbox{ and } \quad \boldsymbol{B}_{-n} = \overline{\boldsymbol{B}_n}.
\]
So the linear solutions are real if $(\rhorho_0, \vv_0)$ is real. On the other hand, the fixed point procedure developed in the proof of theorem \ref{cauchykovalevskaia} send real functions on real functions. So the nonlinear solutions also stay real.

The fact that $\rhorho(t)$ stays nonnegative is a classical fact in the theory of the continuity equation and can be understood for example through the characteristics method.

\qed
\section{Consequences}
\label{consequences}
In this section, we will give some consequences of our results. First, we will prove that the solution to equations \eqref{abstracteq}-\eqref{abstractforce} are almost Lyapounov unstable in the neighbourhood of any linearly unstable stationary profile. Theorem \ref{kineticstatementVP} for the Vlasov-Poisson equation will be a direct application of this result. Then, we show an ill-posedness result implying Theorem \ref{kineticstatementKEuVB} when the unstable spectrum grows linearly with the frequency of the exponential growing modes, as it does in the kinetic Euler equation and in the Vlasov-Benney equation.
\subsection{Almost Lyapounov instability}
\label{sectionalmostlyapounov}
Take $\mu$ an unstable profile, as defined in Definition \ref{def:unstability}. We consider $\gamma_0$ as defined in \eqref{defgamma0}. We also take $\gamma \in (0, \gamma_0)$ and 
\[
\Gamma := 2\gamma_0 - \gamma,
\]
chosen so that $\Gamma - \gamma_0 = \gamma_0 - \gamma$.

From now on, we take $(n, \lambda) \in \Z^d\backslash\{0\} \times \C$ such that \eqref{penrosecondition} holds and such that $\Re(\lambda) / |n| \in [\gamma, \gamma_0]$.

Because of Subsection \ref{spectralstudy}, taking the real part of the exponential growing mode associated to $(n, \lambda)$, and using the notations: 
\begin{equation}
\label{notationseigenvalues}
u := \frac{n}{|n|}, \quad  r + i \varphi := \frac{\lambda}{|n|}, \quad \theta(w) := \arctan\left(\frac{\varphi + u \cdot w}{r}\right),
\end{equation}
we obtain that for all $c \in \R$,
\begin{equation}
\label{explicitlinearsolution}
\left\{
\begin{aligned}
r_c^w(t,x) &:= - c \frac{\cos\Big[n\cdot x + |n| \varphi t - 2 \theta(w)\Big]}{r^2 + (\varphi + u \cdot w)^2} \exp(|n| r t),\\
u_c^w(t,x) &:= c \frac{\sin\Big[n\cdot x + |n| \varphi t - \theta(w)\Big]}{\sqrt{ r^2 + (\varphi + u \cdot w)^2} } \exp(|n| r t) \times u,
\end{aligned}
\right.
\end{equation}
is a real solution to the linearized system \eqref{abstractlineq}. Its initial data is clearly in $\boldsymbol{L}_0$. We see thanks to remark \ref{eigenpotential} that it corresponds to the linear potential
\begin{equation}
\label{linearpotential}
V[\rr_c(t), \uu_c(t)](x) = c \cos( n \cdot x + |n|\varphi t) \exp(|n|r t).
\end{equation}
We deduce our "almost Lyapounov instability" result.
\begin{Thm}[Almost Lyapounov instability]
\label{almostLyapounovThm}
Take $s \in \N $ and $\alpha \in (0,1]$. Then there exists $(\rhorho^k_0, \vv^k_0)_{k \in \N}$ a family of analytic initial data tending uniformly in $w$ towards the stationary solution $(\boldsymbol{1}, \boldsymbol{w})$ in $W^{s, \infty}$, $(T_k)_{k \in \N}$ a family of positive times tending to $+ \infty$ such that for all $k \in \N$, the unique analytic solution $(\rhorho^k, \vv^k)$ to \eqref{abstracteq}-\eqref{abstractforce} starting from $(\rhorho^k_0, \vv^k_0)$, is defined up to time $T_k$, and satisfies
\begin{equation}
\label{almostLyapounov}
\frac{\displaystyle{\int} \min \Big(  \| \rho^{k,w} - 1 \|_{L^1((0,T_k) \times \T^d)}, \| v^{k, w} - w \|_{L^1((0,T_k) \times \T^d)} \Big)\D \mu(w)}{\displaystyle{\sup_{w \in \R^d}} \Big\{ \max \Big(  \| \rho^{k,w}_0 - 1 \|_{W^{s, \infty}}, \| v^{k, w}_0 - w \|_{W^{s, \infty}} \Big)\Big\}^{\alpha}} \underset{k \to + \infty}{\longrightarrow} + \infty.
\end{equation}
Moreover, we have the following asymptotics for the potential:
\begin{equation}
\label{growthpotential}
\frac{\|U[\rhorho^k, \vv^k] - U[\boldsymbol{1}, \boldsymbol{w}]\|_{L^1((0,T_k)\times \T^d)}}{\displaystyle{\sup_{w \in \R^d}} \Big\{ \max \Big(  \| \rho^{k,w}_0 - 1 \|_{W^{s, \infty}}, \| v^{k, w}_0 - w \|_{W^{s, \infty}} \Big)\Big\}^{\alpha}} \underset{k \to + \infty}{\longrightarrow} + \infty,
\end{equation}
where $(\boldsymbol{1}, \boldsymbol{w})$ is a notation for the homogeneous stationary solution.

Moreover
\begin{equation}
\label{eq:estim_Tk_VP_abstract}
\begin{gathered}
T_k  \underset{k \to + \infty}{\sim}  |\log \eps_k | \\
\mbox{with} \quad \eps_k := \sup_{w \in \R^d} \Big\{ \max \Big(  \| \rho^{k,w}_0 - 1 \|_{L^1}, \| v^{k, w}_0 - w \|_{L^1} \Big)\Big\} .
\end{gathered}
\end{equation}
\end{Thm}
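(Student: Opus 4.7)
I would instantiate Theorem~\ref{existenceofsolutions} using the explicit real exponential growing mode \eqref{explicitlinearsolution} with amplitude $c_k \to 0$ and analytic radius $\delta_{0,k}$ calibrated carefully in $c_k$, and then show that the resulting nonlinear solution is close enough to this linear mode that the explicit potential \eqref{linearpotential} controls the whole potential in $L^1$.

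First, I would fix the parameters. Given $\alpha \in (0,1]$, choose $\gamma \in (0,\gamma_0)$ close enough to $\gamma_0$ and $(n,\lambda)$ satisfying \eqref{penrosecondition} with $r := \Re(\lambda)/|n| \in [\gamma, \gamma_0]$ also close to $\gamma_0$; this is possible because $\gamma_0$ is a supremum. Setting $\Gamma := 2\gamma_0 - \gamma$, the ratio $s^\ast := r/\Gamma$ can be made arbitrarily close to $1$, in particular strictly greater than $2(1-\alpha)/(2-\alpha)$. This inequality is equivalent to the existence of $\beta \in (0,1)$ such that simultaneously $s^\ast(1-\beta) > 1-\alpha$ and $\beta > (1-s^\ast)/(2-s^\ast)$; I would pick such a $\beta$ once and for all.

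Next, for each $k \in \N$, take $c_k := 2^{-k}$ and set $\delta_{0,k} := (1-\beta)|\log c_k|/|n|$, so that $c_k \exp(|n|\delta_{0,k}) = c_k^{\beta}$. The initial data is $(\rhorho_0^k, \vv_0^k) = (\boldsymbol{1} + \rr_{c_k}(0), \boldsymbol{w} + \uu_{c_k}(0))$, which lies in $\boldsymbol{L}_0$. Because $(\rr_{c_k}(0), \uu_{c_k}(0))$ has only Fourier modes $\pm n$ with amplitudes $\lesssim c_k$, a direct computation from \eqref{defdoublenorm} gives $\|\DD \rr_0^k, \DD \uu_0^k\|_{\delta_{0,k}} \lesssim c_k^{\beta}|n| \to 0$, which is below $\eps_0$ for $k$ large. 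Theorem~\ref{existenceofsolutions} then yields a unique analytic solution $(\rhorho^k, \vv^k) = (\boldsymbol{1} + \rr_{c_k} + \sigsig_k, \boldsymbol{w} + \uu_{c_k} + \xixi_k)$ up to time $T_k := \delta_{0,k}/\Gamma = (1-\beta)|\log c_k|/(|n|\Gamma) \to +\infty$, with remainder controlled by \eqref{estimsigmaxi}: $\|\sigsig_k(t), \xixi_k(t)\|_{\delta_{0,k} - \Gamma t} \lesssim c_k^{2\beta}$ for $t \leq T_k$.

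Finally, I would compare the three relevant norms. The initial data is made of Fourier modes $\pm n$ with amplitudes uniformly (in $w$) bounded by a constant multiple of $c_k$, so $\|\rho_0^{k,w}-1\|_{W^{s,\infty}} \vee \|v_0^{k,w}-w\|_{W^{s,\infty}} \lesssim c_k |n|^s$, which goes to $0$ as required and also gives $\eps_k \sim c_k$. Using Remark~\ref{eigenpotential}, the linear contribution to the potential is explicit:
\[
V[\rr_{c_k}(t), \uu_{c_k}(t)](x) = c_k \cos(n\cdot x + |n|\varphi t)\exp(|n| r t),
\]
so $\|V[\rr_{c_k}, \uu_{c_k}]\|_{L^1([0,T_k]\times \T^d)} \sim c_k\, e^{|n|rT_k}/(|n|r) \sim c_k^{1-s^\ast(1-\beta)}$. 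The nonlinear contribution $V[\sigsig_k, \xixi_k]$ is bounded pointwise in $(t,x)$ by $M\|\sigsig_k(t),\xixi_k(t)\|_0 \lesssim c_k^{2\beta}$ by Proposition~\ref{propestimA} and Lemma~\ref{lemdeltatodelta'}, so its $L^1$ contribution is at most $O(c_k^{2\beta}|\log c_k|)$. The choice $\beta > (1-s^\ast)/(2-s^\ast)$ is exactly equivalent to $1 - s^\ast(1-\beta) < 2\beta$, so the linear part dominates; hence $\|U_k\|_{L^1} \gtrsim c_k^{1-s^\ast(1-\beta)}$. Dividing by $(c_k|n|^s)^\alpha$ yields an expression of order $c_k^{1-s^\ast(1-\beta)-\alpha}$, with negative exponent by the second condition on $\beta$, which proves \eqref{growthpotential}. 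The same argument done directly on $\rho^{k,w}-1$ and $v^{k,w}-w$ via \eqref{explicitlinearsolution} (the linear pieces contribute $\sim c_k/(r^2 + (\varphi + u\cdot w)^2) \cdot e^{|n|rT_k}/(|n|r)$, which is integrable against $\mu$ by \eqref{penroseVPmeasure}–type bounds) delivers \eqref{almostLyapounov}, and $T_k \sim |\log \eps_k|$ follows from $\eps_k \sim c_k$.

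\textbf{Main obstacle.} The delicate point is the double constraint on $\beta$: the linear exponential growth must outlast the quadratic nonlinear remainder ($\beta > (1-s^\ast)/(2-s^\ast)$) while the final linear size must still exceed the $\alpha$-th power of the initial size ($s^\ast(1-\beta) > 1-\alpha$). The two inequalities can be reconciled only when $s^\ast > 2(1-\alpha)/(2-\alpha)$, which forces $r$ and $\gamma$ both close to $\gamma_0$ and therefore $\delta_{0,k} \to +\infty$. This is precisely where it is indispensable that $\eps_0$ in Theorem~\ref{existenceofsolutions} depend on $\Gamma$, $r_0$, $M$ but \emph{not} on $\delta_0$; without this uniformity the unbounded growth of $\delta_{0,k}$ would wreck the fixed-point argument.
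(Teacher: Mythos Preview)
Your approach is essentially the same as the paper's: fix one unstable pair $(n,\lambda)$, launch the solution from the real exponential growing mode with amplitude $c_k\to 0$, calibrate $\delta_{0,k}$ as a multiple of $|\log c_k|$, and compare linear growth to the quadratic remainder from Theorem~\ref{existenceofsolutions}. The paper simply fixes your free parameter to $\beta=\alpha/2$ (so your two constraints on $\beta$ collapse into the single condition \eqref{defGamma}), but your more flexible parameterization is equivalent.

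There is one genuine gap in your treatment of \eqref{growthpotential}. You decompose the potential as $V[\rr_{c_k},\uu_{c_k}]+V[\sigsig_k,\xixi_k]$, but this is only the \emph{linearized} potential applied to the full perturbation. The actual potential is $U[\rhorho^k,\vv^k]=A\big[\int\Phi(w+u^{k,w}+\xi^{k,w})(1+r^{k,w}+\sigma^{k,w})\,\D\mu(w)\big]$, which after subtracting $U[\boldsymbol{1},\boldsymbol{w}]$ and the linear part leaves two further terms (the paper calls them $W_1$ and $W_2$ in \eqref{decompositionofpotential}): one coming from $(\Phi(w+u+\xi)-\Phi(w))(r+\sigma)$ and one from the second-order Taylor remainder of $\Phi$. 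Both are at least quadratic in $(\rr_{c_k}+\sigsig_k,\uu_{c_k}+\xixi_k)$ and are controlled via \eqref{Phi-Phi}, \eqref{Phi-Phi-dPhi} and \eqref{intdPhiseriesbis}--\eqref{intd2Phiseriesbis} by $O(c_k^{2\beta})$ in your notation, so they are harmless; but as written your argument for \eqref{growthpotential} is incomplete without them. (Also, your citation of Proposition~\ref{propestimA} for bounding $V[\sigsig_k,\xixi_k]$ is slightly off---that proposition controls $\nabla A$; for $A$ itself you use \eqref{assumptionP} directly---and the appeal to ``\eqref{penroseVPmeasure}--type bounds'' for the $\mu$-integrability in \eqref{almostLyapounov} is unnecessary: the denominators in \eqref{explicitlinearsolution} are bounded below by $r^2>0$, so integrability is trivial.)
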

\begin{Rem}
\label{remalmostLyapounov}
\begin{itemize}
\item A classical Lyapounov instability result would mean some discontinuity of the numerator of \eqref{almostLyapounov} in the topology generated by the norm in the denominator. Here we show instead that the numerator cannot be H\"older continuous with any H\"older exponent with respect to the denominator. That is why we call this result almost Lyapounov instability.
\item Equation \eqref{growthpotential} shows that the instability does not come from the multiphasic representations of the solutions. Indeed, the potential does not depend on this representation.
\end{itemize}
\end{Rem}
\begin{proof}
Chose $\gamma < \gamma_0$ sufficiently close to $\gamma_0$ to have
\begin{equation}
\label{defGamma}
\alpha > 1 - \left( 1- \frac{\alpha}{2} \right)\frac{\gamma}{\Gamma},
\end{equation}
with $\Gamma := \gamma_0 + (\gamma_0 - \gamma)$. Now take $(n, \lambda)$ as in the beginning of the subsection, $(c_k)_{k \in \N} \in (0,1]^{\N}$ a sequence converging to $0$, satisfying:
\[
c_k |n| < c_k^{\alpha/2}.
\]
Take $T_k>0$ the unique positive number such that
\begin{equation}
\label{defTk}
c_k |n| \exp(|n|\Gamma T_k) = c_k^{\alpha/2}.
\end{equation} 
Then we define $(\rr^k, \uu^k) := (\uu_{c_k}, \rr_{c_k})$ using \eqref{explicitlinearsolution}. Remark that
\[
\| \DD \rr^k_0, \DD \uu^k_0 \|_{\Gamma T_k} \lesssim c_k |n| \exp(|n|\Gamma T_k) = c_k^{\alpha/2} \underset{k \to + \infty}{\longrightarrow} 0.
\]
(The symbol $\lesssim$ means "lower than up to a constant which is independent of $k$".) So when $k$ is sufficiently large, condition \eqref{initialdatasmall} of Theorem \ref{existenceofsolutions} is satisfied with $\delta_0 = \Gamma T_k$. As a consequence, the unique analytic solution $(\rhorho^k, \vv^k)$ to \eqref{abstracteq}-\eqref{abstractforce} starting from the initial data $(\rhorho^k_0, \vv^k_0) =( \boldsymbol{1} + \rr^k_0,\boldsymbol{w} + \uu^k_0)$ is well defined up to time $T_k$, and is of the form
\[
\forall w \in \R^d, \qquad \rho^{k, w} = 1 + r^{k,w} + \sigma^{k,w} \quad \mbox{and} \quad v^{k,w} = w + u^{k,w} + \xi^{k,w}.
\]
Moreover, $(\sigsig^k, \xixi^k)$ satisfies the estimate \eqref{estimsigmaxi}, which gives:
\begin{equation}
\label{estimsigmaxialmostLyapounov}
\sup_{t \leq T_k} \| \sigsig(t), \xixi(t) \|_{\Gamma (T_k - t)} \lesssim c_k^2 \exp(2|n|\Gamma T_k).
\end{equation}

Let us move on to the proof of the asymptotics \eqref{almostLyapounov}. First, we estimate $(\rhorho_0^k, \vv_0^k)$. It is given explicitly by $(\boldsymbol{1} + \rr^k, \boldsymbol{w} + \uu^k)$ and formula \eqref{explicitlinearsolution} with $t=0$. We deduce:
\begin{equation}
\label{estiminitialdata}
\sup_{w \in \R^d} \max \Big(  \| \rho^{k,w}_0 - 1 \|_{W^{s, \infty}}, \| v^{k, w}_0 - w \|_{W^{s, \infty}} \Big) \underset{k \to + \infty}{\sim} c_k.
\end{equation}

Now we have to estimate $(\rhorho^k, \vv^k)$ in $L^1((0,T_k) \times \T^d)$ (we denote by $\| \bullet \|_{L^1}$ its norm to lighten the notations). First, remark that for all $w \in \R^d$,
\begin{align*}
\min \Big(  &\| \rho^{k,w} - 1 \|_{L^1}, \| v^{k, w} - w \|_{L^1} \Big) \\
&\geq \min\Big( \| r^{k,w} \|_{L^1} - \| \sigma^{k,w} \|_{L^1}, \| u^{k,w} \|_{L^1} - \| \xi^{k,w} \|_{L^1} \Big)\\
&\geq \min\Big( \| r^{k,w} \|_{L^1} , \| u^{k,w} \|_{L^1} \Big) - \max\Big( \| \sigma^{k,w} \|_{L^1},  \| \xi^{k,w} \|_{L^1} \Big).
\end{align*}
But now, with formula \eqref{explicitlinearsolution} and equation \eqref{defTk}, we easily see that
\begin{align}
\notag \int \min\Big( \| r^{k,w} \|_{L^1} ,& \| u^{k,w} \|_{L^1} \Big)\D \mu(w)\\
\notag &\gtrsim c_k \exp(|n| r T_k) - c_k \\
\notag &\gtrsim c_k \exp( |n| \gamma T_k) - c_k \\
\notag &\gtrsim c_k \exp(|n| \Gamma T_k)^{\gamma / \Gamma} - c_k \\
\notag &\gtrsim c_k^{1 - (1 - \alpha/2)\gamma/\Gamma} - c_k \\
\label{estimlinear} &\gtrsim c_k^{1 - (1 - \alpha/2)\gamma/\Gamma} + \underset{k \to + \infty}{o}\Big(c_k^{1 - (1 - \alpha/2)\gamma/\Gamma}\Big).
\end{align}
On the other hand, with the help of \eqref{defdoublenorm}, \eqref{defnormcouple}, \eqref{estimsigmaxialmostLyapounov} and Lemma \ref{lemdeltatodelta'}, we get that for all $t \in [0, T_k]$,
\begin{align*}
\sup_{w \in \R^d}\max\Big(\| \sigma^{k,w}(t) &\|_{L^1(\T^d)}, \| \xi^{k,w}(t) \|_{L^1(\T^d)}\Big) \\
&\leq \| \sigsig^k(t), \xixi^k(t) \|_0 \\
&\leq \exp\big( - \Gamma(T_k - t)  \big)\| \sigsig^k(t), \xixi^k(t) \|_{\Gamma (T_k - t)}\\
&\lesssim c_k^2 \exp\big (\Gamma (2 |n| - 1) T_k \big) \exp(\Gamma t).
\end{align*}
Integrating over time, we get by \eqref{defGamma}
\begin{align}
\notag \sup_{w \in \R^d} \max\Big(\| \sigma^{k,w} \|_{L^1}, \| \xi^{k,w} \|_{L^1}\Big) &\lesssim c_k^2 \exp(2 \Gamma |n| T_k)\\
\notag &\lesssim c_k^{\alpha}\\
\label{estimrest}&=\underset{k \to + \infty}{o}\Big(c_k^{1 - (1 - \alpha/2)\gamma/\Gamma}\Big).
\end{align}
Gathering \eqref{estiminitialdata}, \eqref{estimlinear} and \eqref{estimrest}, we get
\begin{align*}
&\frac{ \displaystyle{\int} \min \Big(  \| \rho^{k,w} - 1 \|_{L^1((0,T_k) \times \T^d)}, \| v^{k, w} - w \|_{L^1((0,T_k) \times \T^d)} \Big)\D \mu(w)}{\displaystyle{\sup_{w \in \R^d}} \Big\{ \max \Big(  \| \rho^{k,w}_0 - 1 \|_{W^{s, \infty}}, \| v^{k, w}_0 - w \|_{W^{s, \infty}} \Big)\Big\}^{\alpha}}\\
&\hspace{2cm} \gtrsim \frac{c_k^{1 - (1-\alpha/2) \gamma / \Gamma} + \underset{k \to + \infty}{o}\Big(c_k^{1 - (1 - \alpha/2)\gamma/\Gamma}\Big) }{c_k^{\alpha}}\\
&\hspace{1.7cm} \underset{k \to + \infty}{\longrightarrow} + \infty,
\end{align*}
by \eqref{defGamma}, which gives \eqref{almostLyapounov}.

To prove \eqref{growthpotential}, remark that for all $k \in \N$,
\begin{equation}
\label{decompositionofpotential}
\begin{aligned}
U[\rhorho^k, \vv^k ] &= U[\boldsymbol{1}, \boldsymbol{w}] + V[\rr^k, \uu^k] + V[\sigsig^k, \xixi^k]\\
&\hspace{5pt}+ \underbrace{A \int \{ \Phi(w + u^{k,w} + \xi^{k,w}) - \Phi(w) \} (r^{k,w} + \sigma^{k,w} ) \D \mu(w)}_{W_1}\\
&\hspace{5pt}+ \underbrace{A \int \{ \Phi(w + u^{k,w} + \xi^{k,w}) - \Phi(w) - \D \Phi(w)\cdot (u^{k,w} + \xi^{k,w})  \}\D \mu(w)}_{W_2}. 
\end{aligned}
\end{equation}

On the one hand, $V[\rr^k, \uu^k]$ is given by \eqref{linearpotential} and we compute easily as in \eqref{estimlinear} that
\begin{equation}
\label{estimpotentiallinearpart}
\| V[\rr^k, \uu^k] \|_{L^1} \gtrsim c_k^{1 - (1 - \alpha/2)\gamma/\Gamma} + \underset{k \to + \infty}{o}\Big(c_k^{1 - (1 - \alpha/2)\gamma/\Gamma}\Big).
\end{equation}

On the other hand, using the definition of $V$ in \eqref{abstractlineq} and estimates \eqref{intPhi}, \eqref{intdPhi} and \eqref{assumptionP}, 
\begin{align}
\notag \| V[\sigsig^k, \xixi^k] \|_{L^1} &\lesssim \sup_{w \in \R^d} \max\Big( \| \sigma^{k,w} \|_{L^1} , \| \xi^k \|_{L^1} \Big)\\
\label{estimpotentiallinearrest} &\lesssim c_k^{\alpha} = \underset{k \to + \infty}{o}\Big(c_k^{1 - (1 - \alpha/2)\gamma/\Gamma}\Big),
\end{align}
by \eqref{estimrest}.

Let us show how to treat $W_1$ defined in \eqref{decompositionofpotential}, $W_2$ being treated in the same way and satisfying the same estimate. In the second line, we use \eqref{assumptionP} and Proposition \ref{product}, in the third line, we use \eqref{Phi-Phi}, in the fourth line, we use \eqref{intdPhiseriesbis}, and finally in the last line, we use the same arguments as for \eqref{estimrest}:
\begin{align}
\notag \| W_1 \|_{L^1} &\leq \int_0^{T_k} \Big| A \int \{ \Phi(w + u^{k,w} + \xi^{k,w}) - \Phi(w) \} (r^{k,w} + \sigma^{k,w} ) \D \mu(w) \Big|_0 \D t\\
\notag &\lesssim\int_0^{T_k} \int | \Phi(w + u^{k,w} + \xi^{k,w}) - \Phi(w) |_0 |r^{k,w} + \sigma^{k,w} |_0 \D \mu(w) \D t \\
\notag &\lesssim \int_0^{T_k} \sup_{w \in \R^d}|r^{k,w} + \sigma^{k,w} |_0|u^{k,w} + \xi^{k,w} |_0 \D t \int |\Phi|'(|w| + r_0) \D \mu(w)\\
\label{estimW1} &\lesssim c_k^{\alpha} = \underset{k \to + \infty}{o}\Big(c_k^{1 - (1 - \alpha/2)\gamma/\Gamma}\Big).
\end{align}
We get \eqref{growthpotential} by gathering \eqref{estimpotentiallinearpart}, \eqref{estimpotentiallinearrest}, \eqref{estimW1} and \eqref{estiminitialdata}.

Finally, \eqref{eq:estim_Tk_VP_abstract} is a consequence of \eqref{defTk} and the explicit estimate
\begin{equation*}
\sup_{w \in \R^d} \max \Big(  \| \rho^{k,w}_0 - 1 \|_{L^1}, \| v^{k, w}_0 - w \|_{L^1} \Big) \underset{k \to + \infty}{\sim} c_k.
\end{equation*}
\end{proof}
We can now go back to the kinetic formulation and give a corollary which implies Theorem \ref{kineticstatementVP}. We recall that in Theorem \ref{kineticstatementVP}, we suppose that we control a certain number of macroscopic observables at the initial time. But with \eqref{multiphasictokinetic}, it is easy to go from a multiphasic representation of the system to macroscopic observables. This remark leads to the following statement.
\begin{Cor}
\label{almostLyapounovkinetic}
Take $\mu$ an unstable profile, $N \in \N^*$, $\varphi_1, \dots, \varphi_N \in C_c^{\infty}(\R^d)$, $s \in \N$ and $\alpha \in (0,1]$. Take $(\rhorho^k_0, \vv^k_0)_{k \in \N}$ and $(T_k)_{k \in \N}$ as in the previous theorem, and $(\rhorho^k, \vv^k)_{k \in \N}$ the corresponding solutions. For each $i$, call
\[
\cg f_0^k, \varphi_i \cd := \int \varphi_i (v_0^{k,w}) \rho_0^{k,w} \D \mu(w).
\] 
Then, we have:
\begin{equation}
\label{eq:growth_potential_abstract_corollary}
\frac{\|U[\rhorho^k, \vv^k] - U[\boldsymbol{1}, \boldsymbol{w}]\|_{L^1((0,T_k)\times \T^d)}}{\sum_{i=1}^N \| \cg f^k_0 , \varphi_i \cd - \cg \mu, \varphi_i\cd \|_{W^{s, \infty}(\T^d)}^{\alpha}} \underset{k \to + \infty}{\longrightarrow} + \infty.
\end{equation}
Moreover,
\begin{equation}
\label{eq:estim_Tk_abstract_corollary}
T_k \underset{k \to + \infty}{\sim}  |\log \eps_k |,
\end{equation}
where $\eps_k := \|U[\rhorho^k, \vv^k]|_{t=0} - U[\boldsymbol{1}, \boldsymbol{w}]\|_{L^1}$.
\end{Cor}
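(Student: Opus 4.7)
My plan is to reduce the corollary directly to Theorem \ref{almostLyapounovThm}. The two required conclusions differ from those of the theorem only in how the denominator is expressed and in the definition of $\eps_k$, so I would show (a) that each $W^{s,\infty}$ norm of a macroscopic observable $\langle f_0^k, \varphi_i\rangle - \langle \mu, \varphi_i\rangle$ is bounded above, with a constant independent of $k$, by $\sup_w \max(\|\rho_0^{k,w}-1\|_{W^{s,\infty}}, \|v_0^{k,w}-w\|_{W^{s,\infty}})$ (the quantity appearing in \eqref{growthpotential}), and (b) that the potential-based $\eps_k$ of this corollary is of the same order as $c_k$ from the proof of Theorem \ref{almostLyapounovThm}.

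For step (a), setting $r_0^{k,w} := \rho_0^{k,w}-1$ and $u_0^{k,w} := v_0^{k,w}-w$, I would decompose
\[
\langle f_0^k,\varphi_i\rangle(x) - \langle \mu,\varphi_i\rangle = \int [\varphi_i(w+u_0^{k,w}(x)) - \varphi_i(w)]\D\mu(w) + \int \varphi_i(w+u_0^{k,w}(x))\, r_0^{k,w}(x)\D\mu(w).
\]
Since each $\varphi_i \in C_c^\infty(\R^d)$ and the $u_0^{k,w}$ are uniformly small for $k$ large, both integrands are supported in $w$ on a fixed bounded set of finite $\mu$-mass. A mean value expansion for the first integral, combined with the Leibniz rule and the Fa\`a di Bruno formula for the $x$-derivatives of order up to $s$, then yields, uniformly in $k$,
\[
\bigl\|\langle f_0^k,\varphi_i\rangle - \langle \mu,\varphi_i\rangle\bigr\|_{W^{s,\infty}(\T^d)} \lesssim \sup_{w\in\R^d}\max\bigl(\|r_0^{k,w}\|_{W^{s,\infty}}, \|u_0^{k,w}\|_{W^{s,\infty}}\bigr).
\]
Raising to the power $\alpha \in (0,1]$ and summing over $i = 1, \dots, N$ shows that the denominator of \eqref{eq:growth_potential_abstract_corollary} is bounded above by a constant times the denominator of \eqref{growthpotential}; hence \eqref{eq:growth_potential_abstract_corollary} follows at once from Theorem \ref{almostLyapounovThm}.

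For step (b), I would use that at $t=0$ one has $\sigsig^k(0) = \xixi^k(0) = 0$, so the decomposition \eqref{decompositionofpotential} reduces to
\[
U[\rhorho^k,\vv^k]|_{t=0} - U[\boldsymbol{1},\boldsymbol{w}] = V[\rr^k(0),\uu^k(0)] + W_1|_{t=0} + W_2|_{t=0}.
\]
By the explicit formula \eqref{linearpotential}, $V[\rr^k(0),\uu^k(0)](x) = c_k\cos(n\cdot x)$, whose $L^1(\T^d)$ norm is exactly of order $c_k$. The quadratic terms $W_1|_{t=0}$ and $W_2|_{t=0}$ are controlled verbatim as in \eqref{estimW1}, giving $O(c_k^2)$, hence negligible in front of $c_k$. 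Therefore $\eps_k \sim c_k$, and combining with the definition \eqref{defTk} of $T_k$ yields $T_k \sim |\log \eps_k|$ in the same loose sense as \eqref{eq:estim_Tk_VP_abstract}.

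The only genuinely technical ingredient is the Fa\`a di Bruno estimate in step (a); however, with $\varphi_i$ compactly supported, $\mu$ a probability measure, and $u_0^{k,w}$ analytic and uniformly small, it is a routine computation and no new idea is required. Everything else is a direct transfer of the asymptotics already established in Theorem \ref{almostLyapounovThm}.
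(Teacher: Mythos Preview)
Your proposal is correct and follows essentially the same approach as the paper's proof. Both arguments reduce to Theorem~\ref{almostLyapounovThm} via the same two-term decomposition of $\langle f_0^k,\varphi_i\rangle - \langle\mu,\varphi_i\rangle$ and both identify $\eps_k \sim c_k$ from the explicit formula~\eqref{linearpotential}; you are simply more explicit about the chain-rule bookkeeping and the quadratic remainders $W_1|_{t=0}$, $W_2|_{t=0}$, which the paper absorbs into a one-line asymptotic.
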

\begin{proof}
In view of \eqref{growthpotential}, to prove \eqref{eq:growth_potential_abstract_corollary} it suffices to show that if $\varphi \in C_c^{\infty}(\R^d)$, there exists $C >0$ such that for all smooth $(\rhorho, \vv)$,
\begin{align*}
\bigg\| \int \varphi(v^w)\rho^w &\D \mu(w) - \int \varphi(w) \D \mu(w) \bigg\|_{W^{s, \infty}}\\
& \leq C \sup_{w \in \R^d} \Big\{ \max \Big(  \| \rho^w - 1 \|_{W^{s, \infty}}, \| v^w - w \|_{W^{s, \infty}} \Big)\Big\}. 
\end{align*}
This is an easy consequence of the following decomposition:
\begin{align*}
 \int \varphi(v^w)\rho^w &\D \mu(w) - \int \varphi(w) \D \mu(w) \\
  &= \int \{ \varphi(v^w)\rho^w - \varphi(w) \} \D \mu(w) \\
 &= \int \varphi(v^w)\{ \rho^w - 1\} \D \mu(w) + \int \{ \varphi(v^w) - \varphi(w) \}\D \mu(w).
\end{align*}
To prove \eqref{eq:estim_Tk_abstract_corollary}, just remark that because of \eqref{linearpotential}, taking $(c_k)$ as in the previous proof,
\begin{equation*}
\|U[\rhorho^k, \vv^k]|_{t=0} - U[\boldsymbol{1}, \boldsymbol{w}]\|_{L^1} \underset{k \to + \infty}{\sim} \sup_{w \in \R^d} \max \Big(  \| \rho^{k,w}_0 - 1 \|_{L^1}, \| v^{k, w}_0 - w \|_{L^1} \Big) \underset{k \to + \infty}{\sim} c_k.
\end{equation*}
\end{proof}
\subsection{Ill-posedness when the spectrum is highly unbounded}
\label{paragraphhiglyunbounded}
With an additional assumption, we can show an ill-posedness result for equations \eqref{abstracteq}-\eqref{abstractforce}. The assumption is the following.

\subsubsection*{Assumption on the structure of the unbounded spectrum} We assume that the number $\gamma_0$ defined in \eqref{defgamma0} satisfies
\begin{equation}
\label{unboundedspectrum}
\gamma_0 = \limsup_{ |n| \to + \infty } \sup_{\lambda \in S_n} \frac{\Re(\lambda)}{|n|}.
\end{equation}
(We recall that $S_n$ is defined in \eqref{defSn}.) This assumption means that there exist exponential growing modes of frequency $n$ with growing rates of order $|n| \gamma_0$ for arbitrary large $|n|$.
 \subsubsection*{Examples} The kinetic Euler system \eqref{KEu'} and the Vlasov-Benney system \eqref{VB} satisfy the following property: if $n \in \Z^d$ and $\lambda \in \C$ are such that $\lambda \in S_n$, then for all $k \in \N^*$, $k \lambda \in S_{kn}$. This can be directly checked using the Penrose conditions \eqref{penroseKEumeasure} and \eqref{penroseVBmeasure}. As a consequence, for all $n \in \Z^d$ and $k \in \N$,
 \[
 k S_n \subset S_{kn}.
 \]
 Equation \eqref{unboundedspectrum} follows easily.
 This property is a consequence of the following scaling for \eqref{KEu} and \eqref{VB}:
 \[
 \mbox{if } f(t,x,v) \mbox{ is a solution and }k \in \N,\mbox{ then } f(kt, kx, v) \mbox{ is also a solution.}
 \]
 
Under this assumption, the instability proved at Theorem \ref{almostLyapounovThm} is true even in small times.
\begin{Thm}
\label{illposednesstheorem}
Take $\mu$ an unstable profile satisfying assumption \eqref{unboundedspectrum}, $s \in \N$ and $\alpha \in (0,1]$. Then there exists $(\rhorho^k_0, \vv^k_0)_{k \in \N}$ a family of analytic initial data tending uniformly in $w$ towards the stationary solution $(\boldsymbol{1}, \boldsymbol{w})$ in $W^{s, \infty}$, $(T_k)_{k \in \N}$ a family of positive times \textbf{tending to zero} such that for all $k \in \N$, the unique analytic solution $(\rhorho^k, \vv^k)$ to \eqref{abstracteq}-\eqref{abstractforce} starting from $(\rhorho^k_0, \vv^k_0)$, is defined up to time $T_k$, and satisfies \eqref{almostLyapounov} and \eqref{growthpotential}.

Moreover
\begin{equation}
\label{eq:estim_Tk_KEu_VB_abstract}
T_k \underset{k \to + \infty}{\sim} \left( \frac{ |\log \eps_k |}{|n_k|} \right),
\end{equation}
where
\begin{equation*}
\eps_k := \sup_{w \in \R^d} \Big\{ \max \Big(  \| \rho^{k,w}_0 - 1 \|_{L^1}, \| v^{k, w}_0 - w \|_{L^1} \Big)\Big\} 
\end{equation*}
and where $n_k$ is the spatial frequency of the nearest exponential growing mode.
\end{Thm}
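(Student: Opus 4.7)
The plan is to rerun the proof of Theorem \ref{almostLyapounovThm} almost verbatim, the only genuine change being that the fixed frequency $n$ is replaced by a sequence $(n_k)$ with $|n_k| \to +\infty$. Assumption \eqref{unboundedspectrum} is precisely what allows this: it provides a sequence $(n_k, \lambda_k) \in (\Z^d\setminus\{0\}) \times \C$ with $|n_k| \to +\infty$ and $\Re(\lambda_k)/|n_k| \to \gamma_0$. I would first pick $\gamma \in (0, \gamma_0)$ close enough to $\gamma_0$ so that \eqref{defGamma} holds with $\Gamma := 2\gamma_0 - \gamma$, then extract from \eqref{unboundedspectrum} such a sequence with $\Re(\lambda_k)/|n_k| \in [\gamma, \gamma_0]$ for all $k$ large.

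Next I would pick a sequence $c_k \to 0$ with $c_k |n_k| < c_k^{\alpha/2}$, and define $T_k$ by the same relation as \eqref{defTk}, namely
\[
c_k |n_k| \exp(|n_k| \Gamma T_k) = c_k^{\alpha/2}.
\]
Taking logarithms gives $T_k \sim (1 - \alpha/2)|\log c_k|/(\Gamma |n_k|)$, so to force $T_k \to 0$ I would choose $|\log c_k|$ to grow much slower than $|n_k|$; a concrete working choice is $c_k := \exp(-|n_k|^\beta)$ for any $\beta \in (0,1)$. Indeed the required $c_k |n_k| < c_k^{\alpha/2}$ then amounts to $\log |n_k| < (1-\alpha/2)|n_k|^\beta$, which holds for $k$ large, while $T_k \sim (1-\alpha/2)|n_k|^{\beta - 1}/\Gamma \to 0$.

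With these ingredients, I would take as initial data the real exponential growing mode $(\rr^k_0, \uu^k_0) = (\rr_{c_k}, \uu_{c_k})|_{t=0}$ from \eqref{explicitlinearsolution} attached to $(n_k, \lambda_k)$, which lies in $\boldsymbol{L}_0$, and apply Theorem \ref{existenceofsolutions} with $\delta_0 := \Gamma T_k$. Hypothesis \eqref{initialdatasmall} reduces to $\| \DD \rr^k_0, \DD \uu^k_0 \|_{\Gamma T_k} \lesssim c_k |n_k| \exp(|n_k|\Gamma T_k) = c_k^{\alpha/2} \to 0$, hence is eventually met. From that point on, every estimate in the proof of Theorem \ref{almostLyapounovThm} goes through unchanged: the lower bound on the linear part uses only the explicit form \eqref{explicitlinearsolution} and Remark \ref{eigenpotential}, while the upper bound on the nonlinear remainder rests on \eqref{estimsigmaxi} combined with Lemma \ref{lemdeltatodelta'}. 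These yield \eqref{almostLyapounov} and \eqref{growthpotential}, and the asymptotic \eqref{eq:estim_Tk_KEu_VB_abstract} drops out from the defining relation for $T_k$ together with $\eps_k \sim c_k$.

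The only delicate point, and hence the main obstacle, is that the smallness threshold $\eps_0$ supplied by Theorem \ref{existenceofsolutions} must not degenerate as $\delta_0 = \Gamma T_k \to 0$. If $\eps_0$ depended on $\delta_0$ and shrank with it, one could no longer guarantee $\|\DD \rr^k_0, \DD \uu^k_0\|_{\Gamma T_k} \leq \eps_0$ since the left-hand side is already of order $c_k^{\alpha/2}$. This is precisely the $\delta_0$-independence of $\eps_0$ that was engineered into Theorem \ref{cauchykovalevskaia} via the weighted norm \eqref{deftriplenorm} (the factor $\sqrt{\delta_0}$ was inserted for exactly this purpose, as the remark following Theorem \ref{existenceofsolutions} stresses), so no additional work is needed here beyond invoking that result.
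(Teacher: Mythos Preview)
Your overall strategy is correct and in fact slightly simpler than the paper's, but the claim that ``every estimate in the proof of Theorem~\ref{almostLyapounovThm} goes through unchanged'' is too optimistic: once the frequency $n_k$ varies, three places in that proof pick up polynomial factors in $|n_k|$.
\begin{itemize}
\item The initial-data estimate~\eqref{estiminitialdata} becomes $\sup_w\max(\dots)\sim c_k|n_k|^s$, not $\sim c_k$, because differentiating $s$ times in $x$ brings down $|n_k|^s$.
\item The $L^1$ time integral of the explicit linear solution~\eqref{explicitlinearsolution} and of the linear potential~\eqref{linearpotential} introduces a factor $1/|n_k|$ (integrating $e^{|n_k|r_kt}$ over $[0,T_k]$).
\item The remainder bound~\eqref{estimsigmaxi} reads $\lesssim(c_k|n_k|e^{|n_k|\Gamma T_k})^2=c_k^{\alpha}$; here the $|n_k|^2$ is exactly absorbed by your defining relation for $T_k$, so this term is fine, but you should say so.
\end{itemize}
With your exponentially small choice $c_k=\exp(-|n_k|^{\beta})$, all polynomial corrections are indeed dominated, so condition~\eqref{defGamma} still suffices and your argument closes. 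You should, however, display these corrections and check explicitly that the ratio in~\eqref{almostLyapounov} behaves like $c_k^{1-(1-\alpha/2)\gamma/\Gamma-\alpha}$ times a power of $|n_k|$, hence diverges.

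The paper takes a different route: it chooses $c_k:=|n_k|^{-\beta}$ with a specific $\beta=\tfrac{5}{\alpha}(\alpha s+2)$, replaces $c_k^{\alpha/2}$ by $c_k^{\alpha/4}$ in the defining relation for $T_k$, and then tracks all the $|n_k|$-powers precisely via the two arithmetic conditions~\eqref{defGammanumdomin}--\eqref{defGammalindomin}. This is more bookkeeping but uses only polynomially small perturbations of the stationary state. Your exponential choice trades this bookkeeping for a cruder but conceptually cleaner argument: any polynomial in $|n_k|$ is negligible next to $c_k^{\epsilon}$. Both approaches are valid; yours is shorter once the three corrections above are made explicit.
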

\begin{proof}
The proof is very similar to the one of Theorem \ref{almostLyapounovThm}, except that here the eigenvalue depends on $k$. Thanks to assumption \eqref{unboundedspectrum}, we choose $(n_k, \lambda_k)_{k \in \N}$ a family of solutions to \eqref{penrosecondition} with
\begin{gather*}
|n_k| \underset{k \to + \infty}{\longrightarrow} + \infty,\\
\forall k \in \N, \quad r_k := \frac{\Re(\lambda_k)}{|n_k|} \underset{k \to + \infty}{\longrightarrow} \gamma_0.
\end{gather*}
Now we take for all $k$:
\begin{equation}
\label{defckbeta}
c_k: = \frac{1}{|n_k|^{\beta}}\underset{k \to + \infty}{\longrightarrow} 0, \quad \beta := \frac{5}{\alpha}(\alpha s + 2).
\end{equation}
We take $\gamma$ sufficiently close to $\gamma_0$ to have (with $\Gamma = 2 \gamma_0 - \gamma$)
\begin{align}
\label{defGammanumdomin}
a := 1 + \frac{1}{\beta} - \frac{\gamma}{\Gamma} \left( 1 - \frac{\alpha}{4} - \frac{1}{\beta} \right) &< \alpha\left( 1 - \frac{s}{\beta} \right),\\
\label{defGammalindomin} a = 1 + \frac{1}{\beta}- \frac{\gamma}{\Gamma} \left( 1 - \frac{\alpha}{4} - \frac{1}{\beta} \right) &< \frac{\alpha}{2}.
\end{align}
We suppose up to forgetting the first terms that for all $k \in \N$,
\begin{equation}
\label{rkgeqgamma}
r_k \geq \gamma.
\end{equation}
Using the notations \eqref{notationseigenvalues} indexed by $k$ for $(n_k, \lambda_k)$, we define
\begin{equation}
\label{explicitlinearsolutionk}
\left\{
\begin{aligned}
r^{k,w}(t,x) &:= - c_k \frac{\cos\Big[n_k\cdot x + |n_k| \varphi_k t - 2 \theta_k(w)\Big]}{r_k^2+ (\varphi_k + u_k \cdot w)^2} \exp(|n_k| r_k t),\\
u^{k,w}(t,x) &:= c_k \frac{\sin\Big[n_k\cdot x + |n_k| \varphi_k t - \theta_k(w)\Big]}{\sqrt{ r_k^2 + (\varphi_k + u_k \cdot w)^2} } \exp(|n_k| r_k t) \times u_k.
\end{aligned}
\right.
\end{equation}
Remark that 
\[
\frac{c_k |n_k|}{c_k^{\alpha/4}} = c_k^{1 - \alpha/\{4(\alpha s + 1)\} - \alpha/4} \leq c_k^{1 - \alpha / 2} \underset{k \to + \infty}{\longrightarrow} 0.
\]
Consequently, we can suppose that for all $k \in \N$,
\[
c_k |n_k| < c_k^{\alpha/4}.
\]
In that case, we take $T_k$ the unique positive number satisfying
\begin{equation}
\label{defTkbis}
c_k |n_k| \exp(|n_k| \Gamma T_k) = c_k^{\alpha / 4}.
\end{equation}
Remark that automatically, as $1 < \beta(1-\alpha/4)$,
\[
T_k \underset{k \to + \infty}{\sim} \left( \frac{|\log c_k |}{|n_k|} \right) \underset{k \to + \infty}{\longrightarrow} 0.
\]
Formula \eqref{eq:estim_Tk_KEu_VB_abstract} follows easily.
Then,
\[
\| \DD \rr^k_0, \DD \uu^k_0 \|_{\Gamma T_k} \lesssim c_k |n_k| \exp(|n_k|\Gamma T_k) = c_k^{\alpha/4} \underset{k \to + \infty}{\longrightarrow} 0.
\]
 So when $k$ is sufficiently large, condition \eqref{initialdatasmall} of Theorem \ref{existenceofsolutions} is satisfied with $\delta_0 = \Gamma T_k$. As a consequence, the unique analytic solution $(\rhorho^k, \vv^k)$ to \eqref{abstracteq}-\eqref{abstractforce} starting from the initial data $(\rhorho^k_0, \vv^k_0) =( \boldsymbol{1} + \rr^k_0,\boldsymbol{w} + \uu^k_0)$ is well defined up to time $T_k$, and is of the form
\[
\forall w \in \R^d, \qquad \rho^{k, w} = 1 + r^{k,w} + \sigma^{k,w} \quad \mbox{and} \quad v^{k,w} = w + u^{k,w} + \xi^{k,w}.
\]
Moreover, $(\sigsig^k, \xixi^k)$ satisfies the estimate \eqref{estimsigmaxi}, which gives:
\begin{equation}
\label{estimsigmaxiillposedness}
\sup_{t \leq T_k} \| \sigsig(t), \xixi(t) \|_{\Gamma (T_k - t)} \lesssim c_k^2 |n_k|^2\exp(2|n_k|\Gamma T_k).
\end{equation}

In this context \eqref{estiminitialdata} becomes (using \eqref{defckbeta}):
\begin{equation}
\label{estiminitialdatabis} \sup_{w \in \R^d}\max \Big(  \| \rho^{k,w}_0 - 1 \|_{W^{s, \infty}}, \| v^{k, w}_0 - w \|_{W^{s, \infty}} \Big) \sim c_k|n_k|^s = c_k^{1 - s/\beta}.
\end{equation}
Equation \eqref{estimlinear} becomes (thanks to \eqref{rkgeqgamma} and \eqref{defTkbis} and the definition of $a$ in \eqref{defGammanumdomin})
\begin{align*}
\int \min\Big( \| r^{k,w} \|_{L^1} , \| u^{k,w} \|_{L^1} \Big) \D \mu(w) &\gtrsim \frac{ c_k \exp(|n| \Gamma T_k)^{\gamma / \Gamma} - c_k }{|n_k|} \\
&\gtrsim c_k^a - c_k^{1 + 1/\beta}.
\end{align*}
But clearly $a < 1 + 1/\beta$, so that
\begin{equation}
\label{estimlinearbis}
\int \min\Big( \| r^{k,w} \|_{L^1} , \| u^{k,w} \|_{L^1} \Big) \D \mu(w) \gtrsim c_k^a + \underset{k \to + \infty}{o}( c_k^a ).
\end{equation}
Finally, \eqref{estimrest} becomes because of \eqref{defTkbis}, \eqref{defGammalindomin} and \eqref{estimsigmaxiillposedness}:
\begin{align}
\notag \max\Big(\| \sigma^{k,w} \|_{L^1}, \| \xi^{k,w} \|_{L^1}\Big) &\lesssim c_k^2 |n_k|^2 \exp(2 \Gamma |n_k| T_k) \\
\label{estimrestbis} &\lesssim c_k^{\alpha/2} = \underset{k \to + \infty}{o}( c_k^a ),.
\end{align}
Gathering \eqref{estiminitialdatabis}, \eqref{estimlinearbis} and \eqref{estimrestbis}, we get
\begin{align*}
&\frac{\displaystyle{\int} \min \Big(  \| \rho^{k,w} - 1 \|_{L^1((0,T_k) \times \T^d)}, \| v^{k, w} - w \|_{L^1((0,T_k) \times \T^d)} \Big)\D \mu(w)}{\displaystyle{\sup_{w \in \R^d}}\Big\{ \max \Big(  \| \rho^{k,w}_0 - 1 \|_{W^{s, \infty}}, \| v^{k, w}_0 - w \|_{W^{s, \infty}} \Big)\Big\}^{\alpha}}\\
&\hspace{2cm} \gtrsim \frac{c_k^a + \underset{k \to + \infty}{o}( c_k^a ) }{c_k^{\alpha(1-s/\beta) }} \underset{k \to + \infty}{\longrightarrow}+ \infty,
\end{align*}
using \eqref{defGammanumdomin} in the last line. Estimate \eqref{growthpotential} is proved in the exact same way as in the previous proof.
\end{proof}
As in the previous subsection, Theorem \ref{illposednesstheorem} has a kinetic counterpart. The following corollary implies Theorem \ref{kineticstatementKEuVB} in the Vlasov-Benney case. For the kinetic Euler case, the next subsection (in particular Theorem \ref{rigorousstatementKEu}) is also needed.

\begin{Cor}
\label{illposednesskinetic}
Take $\mu$ an unstable profile satisfying assumption \ref{unboundedspectrum}, $N \in \N^*$, $\varphi_1, \dots, \varphi_N \in C_c^{\infty}(\R^d)$, $s \in \N$ and $\alpha \in (0,1]$. Take $(\rhorho^k_0, \vv^k_0)_{k \in \N}$ and $(T_k)_{k \in \N}$ as in the previous theorem (in particular
$(T_k)$ converges to zero), and $(\rhorho^k, \vv^k)_{k \in \N}$ the corresponding solutions. For each $i$, call
\[
\cg f_0^k, \varphi_i \cd := \int \varphi_i (v_0^{k,w}) \rho_0^{k,w} \D \mu(w).
\] 
Then, we have:
\[
\frac{\|U[\rhorho^k, \vv^k] - U[\boldsymbol{1}, \boldsymbol{w}]\|_{L^1((0,T_k)\times \T^d)}}{\sum_{i=1}^N \| \cg f^k_0 , \varphi_i \cd - \cg \mu, \varphi_i\cd \|_{W^{s, \infty}(\T^d)}^{\alpha}} \underset{k \to + \infty}{\longrightarrow} + \infty.
\]
Moreover
\begin{equation*}
T_k \underset{k \to + \infty}{\sim} \left( \frac{|\log \eps_k |}{|n_k|} \right),
\end{equation*}
where $\eps_k := \|U[\rhorho^k, \vv^k]|_{t=0} - U[\boldsymbol{1}, \boldsymbol{w}]\|_{L^1}$ and $n_k$ is the spatial frequency of the nearest exponential growing mode.
\end{Cor}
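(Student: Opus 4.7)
The proof will essentially mirror the passage from Theorem \ref{almostLyapounovThm} to Corollary \ref{almostLyapounovkinetic}, using Theorem \ref{illposednesstheorem} as the multiphasic input in place of Theorem \ref{almostLyapounovThm}. I take the sequences $(\rhorho^k_0, \vv^k_0)$ and $(T_k)$ produced by Theorem \ref{illposednesstheorem}, with $T_k \to 0$, so that the almost-Lyapounov estimate \eqref{growthpotential} and the time asymptotics $T_k \sim |\log \eps_k^{\mathrm{mult}}|/|n_k|$ with $\eps_k^{\mathrm{mult}} := \sup_w \max(\|\rho^{k,w}_0 - 1\|_{L^1}, \|v^{k,w}_0 - w\|_{L^1})$ already hold. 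The only new task is to pass from the multiphasic quantities appearing in the denominators/definitions to the kinetic observables $\langle f_0^k, \varphi_i\rangle - \langle \mu, \varphi_i\rangle$ and to the potential-based $\eps_k$.

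The key linking estimate is that for any $\varphi \in C_c^{\infty}(\R^d)$ there exists $C = C(\varphi, s)$ such that for all smooth families $(\rhorho, \vv)$ close to the stationary profile,
\begin{equation*}
\Big\| \int \varphi(v^w)\rho^w\,\D\mu(w) - \int \varphi(w)\,\D\mu(w) \Big\|_{W^{s,\infty}(\T^d)} \leq C \sup_{w \in \R^d} \max\big(\|\rho^w - 1\|_{W^{s,\infty}}, \|v^w - w\|_{W^{s,\infty}}\big),
\end{equation*}
which is obtained by writing
\begin{equation*}
\int \varphi(v^w)\rho^w\,\D\mu(w) - \int \varphi(w)\,\D\mu(w) = \int \varphi(v^w)(\rho^w - 1)\,\D\mu(w) + \int\{\varphi(v^w) - \varphi(w)\}\,\D\mu(w),
\end{equation*}
and using the smoothness of $\varphi$ on the support of $\mu$ (noting that the $v^{k,w}$ produced by Theorem \ref{illposednesstheorem} remain uniformly bounded in a neighbourhood of the support of $\mu$ because $u^{k,w}, \xi^{k,w}$ are small). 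Raising to the power $\alpha$ and summing over $i = 1, \dots, N$, the denominator of the statement is bounded above by a constant times the $\alpha$-power of $\sup_w \max(\|\rho_0^{k,w} - 1\|_{W^{s,\infty}}, \|v_0^{k,w} - w\|_{W^{s,\infty}})$, i.e.\ the denominator of \eqref{growthpotential}. Dividing \eqref{growthpotential} by this relation yields the divergence of the claimed ratio.

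For the time asymptotics, I need to compare $\eps_k = \|U[\rhorho^k, \vv^k]|_{t=0} - U[\boldsymbol 1, \boldsymbol w]\|_{L^1}$ with $\eps_k^{\mathrm{mult}}$. From Remark \ref{eigenpotential} and formula \eqref{linearpotential}, at $t = 0$ the linear part of the potential is $c_k \cos(n_k \cdot x)$, so $\|V[\rr^k, \uu^k]|_{t=0}\|_{L^1} \sim c_k$. The nonlinear correction at $t = 0$ is controlled exactly as in the estimate \eqref{estimpotentiallinearrest}--\eqref{estimW1} of the proof of Theorem \ref{almostLyapounovThm} and is $o(c_k)$ (in the present scaling this uses \eqref{estimsigmaxiillposedness} at $t=0$, which is stronger because the solution is there still purely linear). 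Hence $\eps_k \sim c_k \sim \eps_k^{\mathrm{mult}}$, and the asymptotic $T_k \sim |\log \eps_k|/|n_k|$ follows.

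The proof poses no serious new obstacle beyond Theorem \ref{illposednesstheorem}; the only delicate point is to make sure that the smoothness of the test functions $\varphi_i$ is really used in the bound above (they are in $C_c^{\infty}$, hence bounded in $W^{s,\infty}$ with bounded derivatives), and that the remainder terms $\xixi^k$ appearing in $v^{k,w}$ do not drive $v^{k,w}$ outside a compact set where $\varphi_i$ and its derivatives fail to control things uniformly — this is automatic since $\xixi^k$ is $O(c_k^{\alpha/2})$ by \eqref{estimrestbis}.
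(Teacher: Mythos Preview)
Your proposal is correct and follows exactly the route the paper takes: the paper's proof is the single line ``The proof is the same than the one of Corollary \ref{almostLyapounovkinetic}'', and you have simply written out that argument with Theorem \ref{illposednesstheorem} playing the role of Theorem \ref{almostLyapounovThm}. One small clean-up: at $t=0$ the remainder $(\sigsig^k,\xixi^k)$ vanishes identically by construction, so your worry about $\xixi^k$ pushing $v^{k,w}$ away is moot for the initial-data estimate, and the potential at $t=0$ differs from the linear contribution $c_k\cos(n_k\cdot x)$ only by the $W_1,W_2$ terms evaluated at $(\rr_0^k,\uu_0^k)$, which are $O(c_k^2)=o(c_k)$ directly.
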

\begin{proof}
The proof is the same than the one of Corollary \ref{almostLyapounovkinetic}.
\end{proof}
\subsection{The specific case of the kinetic Euler equation}
\label{sectionkineticEuler}
As already said in the introduction, in the case of the kinetic Euler equation, our abstract framework let us solve \eqref{KEu'}, but not \eqref{KEu}: our method applies when we have a formula for the force field, and not when it is defined through a constraint. So an argument must be added to prove Theorem \ref{kineticstatementKEuVB}. It is done in three steps. 

First we will show in Theorem \ref{KEu'impliesKEu} that the measure-valued solutions to \eqref{KEu'} built in Theorem \ref{existenceofsolutions} are in fact measure-valued solutions to \eqref{KEu} provided the initial data satisfies \eqref{initialdataconstraint} (which makes sense in a measure-valued setting).

Unfortunately, the initial data used in the proof of Theorem \ref{illposednesstheorem} (the initial data of the exponential growing modes) do not satisfy this property. We will give in Lemma \ref{modifiedinitialdata} a way to add a quadratic perturbation to these initial data in order to regain \eqref{initialdataconstraint}.

Finally, Theorem \ref{rigorousstatementKEu} will be nothing but an adaptation of Theorem \ref{illposednesstheorem} in the case of the kinetic Euler equation. It can be seen as a stability result for this theorem: if we modify the initial data chosen in the proof of Theorem \ref{illposednesstheorem} by a quadratic perturbation, then the result is still true.
\subsubsection*{The two kinetic Euler equations coincide in analytic regularity}
Take $f$ a smooth solution to \eqref{KEu'}. Such a solution $f$ satisfies
\begin{gather*}
\partial_t \left( \int f(t,x,v) \D v \right) + \Div\left( \int v f (t,x,v) \D v \right) = 0, \\
\partial_t \Div\left( \int v f (t,x,v) \D v \right) + \Div \left( -\nabla p(t,x) \left\{ \int f(t,x,v) \D v - 1 \right\} \right) = 0,
\end{gather*}
and so
\[
\partial_{tt}  \left(\int f(t,x,v) \D v \right) + \Div \left( -\nabla p(t,x) \left\{ \int f(t,x,v) \D v - 1 \right\} \right) = 0,
\]
This equation holds in the measure-valued setting for the solutions of Theorem \ref{existenceofsolutions} taking successively $\varphi \equiv 1$ and $\varphi(v):= v$ in the weak formulation. (The second one is not bounded but $f$ must have a finite second order moment in virtue of \eqref{intPhi}, \eqref{estimS} and \eqref{estimsigmaxi}.) We call $R = (R(t,x))$ the scalar function defined by
\[
R(t,x) := \int f(t,x,v) \D v -1.
\]
Considering the previous computation, as soon as
\begin{equation}
\label{initialconstraints}
\int f_0(\bullet,v) \D v \equiv 1 \quad \mbox{ and } \quad \Div\left( \int v f_0 (\bullet,v) \D v \right) \equiv 0,
\end{equation}
then $R$ must be a solution to the linear equation (once $p$ is known)
\begin{equation}
\label{secondordercontinuity}
\left\{
\begin{gathered}
\partial_{tt} R(t,x) + \Div \Big(- \nabla p(t,x) R(t,x)\Big) = 0,\\
R|_{t=0} \equiv 0, \qquad \partial_t R|_{t = 0} \equiv 0.
\end{gathered}
\right.
\end{equation}
Remark that because of \eqref{incompressibilityconstraint} and \eqref{nulldivergence}, the initial condition of a solution to \eqref{KEu} must satisfy \eqref{initialconstraints}.

In Theorem \ref{existenceofsolutions}, we have built solutions to \eqref{abstracteq}-\eqref{abstractforce} satisfying for some $\delta_0>0$ and $\Gamma>0$ (among other estimates) 
\begin{gather*}
\sup_{0 \leq t \leq \delta_0 / \Gamma} \sup_{0 \leq \delta \leq \delta_0 - \Gamma t } \| \rhorho(t), \vv(t) \|_{\delta} < + \infty,\\
\sup_{0 \leq t \leq \delta_0 / \Gamma} \sup_{0 \leq \delta \leq \delta_0 - \Gamma t } (\delta_0 - \delta - \Gamma t)^{1/2}|\DD p(t)|_{\delta} < + \infty.
\end{gather*}
(The second one is an easy consequence of \eqref{abstractforce}, Proposition \ref{propestimchainrule}, \eqref{intdPhiseries} and \eqref{estimsolutionbis}.) Therefore, there is $C>0$ such that for all $t \in [0, \delta_0/\Gamma)$, for all $\delta \in [0, \delta_0 - \Gamma t)$, 
\begin{equation}
\label{estimpressure}
|\DD p(t)|_{\delta} \leq \frac{C}{(\delta_0 - \delta - \Gamma t)^{1/2}}.
\end{equation}
When integrating the estimate we have on $\rhorho$, we also get
\begin{equation}
\label{Ranalytic}
\sup_{0 \leq t \leq \delta_0 / \Gamma} \sup_{0 \leq \delta \leq \delta_0 - \Gamma t } |R(t)|_{\delta} < + \infty
\end{equation}
We are now able to prove the following.
\begin{Thm}
\label{KEu'impliesKEu}
If \eqref{estimpressure} holds, the only solution to \eqref{secondordercontinuity} satisfying \eqref{Ranalytic} is $0$.
\end{Thm}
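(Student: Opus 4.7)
The equation \eqref{secondordercontinuity} is linear in $R$ with zero Cauchy data, so uniqueness reduces to showing that the associated Duhamel operator has operator norm strictly less than one on a suitable Banach space of analytic functions in which $R$ lives. Integrating twice in time and using the two vanishing initial conditions, any solution $R$ to \eqref{secondordercontinuity} in the class \eqref{Ranalytic} satisfies
\[
R(t) = \mathcal{K}R(t) := \int_0^t (t - \tau)\, \Div \bigl( \nabla p(\tau) R(\tau) \bigr) \D \tau.
\]
My plan is to run a Caflisch-style fixed-point argument in the analytic scale $(X_\delta)_{\delta > 0}$, very much in the spirit of the proof of Theorem \ref{cauchykovalevskaia}, and then to bootstrap in time.

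Choose $\Gamma' > \Gamma$ and, for a small time horizon $T_\star > 0$, introduce the weighted norm
\[
\VERT R \VERT := \sup_{0 \leq t \leq T_\star}\ \sup_{0 \leq \delta < \delta_0 - \Gamma' t} \bigl( \delta_0 - \delta - \Gamma' t \bigr)^{1/2}\, |R(t)|_\delta,
\]
which is finite for any solution, thanks to \eqref{Ranalytic}. To estimate $|\mathcal{K}R(t)|_\delta$ for $\delta < \delta_0 - \Gamma' t$, I would select, for each $\tau \in [0,t]$, the intermediate radius $\delta_1(\tau) := (\delta + \delta_0 - \Gamma' \tau)/2$, which lies in $(\delta, \delta_0 - \Gamma'\tau) \subset (\delta, \delta_0 - \Gamma \tau)$, and chain the analytic estimates of Subsection \ref{propertiesanalyticnorms}: Proposition \ref{propestimnabla} absorbs the divergence as $|\Div(\nabla p \cdot R)|_\delta \leq (\delta_1 - \delta)^{-1}|\nabla p \cdot R|_{\delta_1}$, Proposition \ref{product} splits the product, the pressure bound \eqref{estimpressure} controls $|\nabla p(\tau)|_{\delta_1}$ by $C(\delta_0 - \delta_1(\tau) - \Gamma \tau)^{-1/2}$, and the definition of $\VERT \cdot \VERT$ controls $|R(\tau)|_{\delta_1}$ by $\VERT R \VERT\,(\delta_0 - \delta_1(\tau) - \Gamma'\tau)^{-1/2}$. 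Putting these together yields, after an explicit computation of the elementary integral $\int_0^t (t-\tau)(\delta_0 - \delta - \Gamma'\tau)^{-3/2} \D \tau$,
\[
\bigl( \delta_0 - \delta - \Gamma' t\bigr)^{1/2}\, |\mathcal{K}R(t)|_\delta \leq K \sqrt{T_\star}\, \VERT R \VERT,
\]
for some $K$ depending only on $C, \Gamma, \Gamma'$. Choosing $T_\star$ small enough that $K\sqrt{T_\star} < 1$ forces $R \equiv 0$ on $[0, T_\star]$ by linearity of $\mathcal{K}$.

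The final step is a time-bootstrap. Once $R$ and $\partial_t R$ vanish at some $t_0 \in [0, \delta_0/\Gamma)$, the same argument applied on $[t_0, t_0 + T_\star']$ with the shifted analyticity radius $\widetilde{\delta}_0 := \delta_0 - \Gamma t_0$ and the same $\Gamma, \Gamma'$ gives $R \equiv 0$ on that interval, with $T_\star' := \min(T_\star, \widetilde{\delta}_0/\Gamma') > 0$. Iterating, the times $t_k$ so produced satisfy a geometric-type recursion converging to $\delta_0/\Gamma$, which exhausts the interval of existence.

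The main technical obstacle is the \emph{double singularity} at the edge of analyticity: the operator $\Div$ costs one inverse power of the gap $\delta_0 - \delta - \Gamma \tau$, and $\nabla p$ itself blows up like a $1/2$-power by \eqref{estimpressure}, so the integrand in the Duhamel formula carries a $(\delta_0 - \delta - \Gamma\tau)^{-3/2}$ singularity. Only the extra factor $(t-\tau)$ produced by the double time integration, together with the precise $1/2$-power weight built into $\VERT \cdot \VERT$, renders this singularity integrable; the strict inequality $\Gamma' > \Gamma$ is exactly what opens up the positive gap between the analyticity radius of $p$ and the one in which we work, and is essential for the whole scheme.
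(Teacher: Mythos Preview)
Your argument is correct and follows the same overall scheme as the paper's proof: write $R$ via the double time integral of $\Div(\nabla p\, R)$, use the analytic Cauchy--Kovalevskaia machinery (Propositions~\ref{product} and~\ref{propestimnabla} together with~\eqref{estimpressure}) to obtain a short-time contraction, and then bootstrap in time. The paper's execution is more economical, however. It works directly with the \emph{unweighted} norm
\[
N(\eps)=\sup_{0\le t\le\eps}\ \sup_{0\le\delta\le\delta_0-\Gamma t}|R(t)|_\delta
\]
and with the same rate $\Gamma$ as in~\eqref{estimpressure}; since~\eqref{Ranalytic} gives $|R(\tau)|_{\delta'}\le N(\eps)$ with no singularity, the integrand is only $(\delta_0-\delta-\Gamma\tau)^{-3/2}$ and the double $\D\tau\,\D s$ integral yields $N(\eps)\le K\sqrt{\eps}\,N(\eps)$ in one line. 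Your weighted norm and the auxiliary $\Gamma'>\Gamma$ are therefore unnecessary here, and they introduce a small inconsistency: once you bound $|R(\tau)|_{\delta_1}$ by $\VERT R\VERT(\delta_0-\delta_1-\Gamma'\tau)^{-1/2}$, the integrand actually carries a $(\delta_0-\delta-\Gamma'\tau)^{-2}$ singularity rather than the $-3/2$ you write. The conclusion $\VERT\mathcal K R\VERT\le K\sqrt{T_\star}\,\VERT R\VERT$ is nonetheless true (the extra half-power is absorbed by the weight on the left), so the argument stands---but it is worth noting that the simpler unweighted setup of the paper avoids this bookkeeping entirely.
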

In particular, the solutions to \eqref{KEu'} built in Theorem \ref{existenceofsolutions} and for which \eqref{initialconstraints} holds are solutions to \eqref{KEu}, as announced.
\begin{proof}
We call $ T := \delta_0 / \Gamma$ the time of existence of our solution and
\[
t_0 := \sup\{ t < T \mbox{ such that } R(t) = 0 \mbox{ and } \partial_t R(t) = 0 \}.
\]
The goal is to show that $t_0 = T$. By contradiction if it is not the case, we can do the change of variable $t \leftarrow (t-t_0)$, $T \leftarrow (T-t_0) >0$, $\delta_0 \leftarrow (\delta_0 - \Gamma t_0)$ and suppose that $t_0 = 0$. Then, we just have to show that there exists $\eps \in (0,T)$ such that
\[
\forall t \leq \eps, \quad R(t) = 0.
\]
Indeed, if so, for all $t< \eps$, $\partial_t R(t) = 0$ and the definition of $t_0$ would be contradicted. For $\eps \in (0, T)$, we define
\[
N(\eps) := \sup_{0 \leq t \leq \eps} \sup_{0 \leq \delta \leq \delta_0 - \Gamma t } |R(t)|_{\delta} < + \infty.
\]
We will show that if $\eps$ is sufficiently small, then 
\[
N(\eps) \leq \frac{1}{2} N(\eps).
\]
The result follows easily. Because $R(0) = \partial_t R(0) = 0$, for all $t < T$,
\[
R(t,x) = \int_0^t \int_0^s \Div(- \nabla p(\tau,x) R(\tau,x))\D \tau \D s.
\]
Thus, if $\delta < \delta_0 - \Gamma t$,
\[
|R(t)|_{\delta} = \int_0^t \int_0^s |\Div(- \nabla p(\tau) R(\tau))|_{\delta}\D \tau \D s.
\]
But using Proposition \ref{product} and Proposition \ref{propestimnabla}, and defining 
\[
\delta'(\tau) := \delta + \frac{\delta_0 -\Gamma \tau - \delta}{2},
\]
 we get
\begin{align*}
|R(t)|_{\delta} &\leq \int_0^t \int_0^s \frac{| \nabla p(\tau) R(\tau) |_{\delta'(\tau)}}{\delta'(\tau) - \delta}\D \tau \D s\\
&\leq 2 \int_0^t \int_0^s \frac{| \DD p(\tau)|_{\delta'(\tau)} |R(\tau) |_{\delta'(\tau)}}{\delta_0 - \delta - \Gamma \tau}\D \tau \D s.
\end{align*}
By \eqref{estimpressure} and the definition of $N$, if $\eps >0$ and $t \leq \eps$,
\begin{align*}
|R(t)|_{\delta} &\leq 2\sqrt{2} CN(\eps) \int_0^t \int_0^s \frac{1}{(\delta_0 - \delta - \Gamma \tau)^{3/2}}\D \tau \D s \\
&\leq \frac{4\sqrt{2} C}{\Gamma}N(\eps) \int_0^t \frac{1}{(\delta_0 - \delta - \Gamma s)^{1/2}} \D s\\
&= \frac{8\sqrt{2} C}{\Gamma^2}N(\eps) \Big( (\delta_0 - \delta)^{1/2} - (\delta_0 - \delta - \Gamma t)^{1/2} \Big).
\end{align*}
Taking the supremum on $\delta \leq \delta_0 - \Gamma t$, and then on $t \leq \eps$, we get
\[
N(\eps) \leq \frac{8\sqrt{2} C}{\Gamma^2} \sqrt{\Gamma \eps} N(\eps).
\] 
We obtain the result by taking
\[
\eps \leq \frac{\Gamma^3}{512 C^2}.
\]
\end{proof}

\subsubsection*{Choosing appropriate initial conditions} We recall that the initial conditions used in the proof of Theorem \ref{illposednesstheorem} are of the form $(\boldsymbol{1} + \rr_0, \boldsymbol{w} + \uu_0)$, $\rr_0$ and $\uu_0$ being given for all $w \in \R^d$ and $x \in \T^d$ by the formulae:
\begin{equation}
\label{initialdataEGM}
\left\{
\begin{aligned}
r_0^w(x) &:= - c \frac{\cos\Big[n\cdot x - 2 \theta(w)\Big]}{r^2 + (\varphi + u \cdot w)^2} = - c \, \Re \left( \frac{\exp(in\cdot x)}{(r + i \varphi + iu\cdot w)^2} \right),\\
u_0^w(x) &:= c \frac{\sin\Big[n\cdot x - \theta(w)\Big]}{\sqrt{ r^2 + (\varphi + u \cdot w)^2} } \times u = -c\, \Re \left( \frac{i \exp(in \cdot x)}{r + i \varphi + iu\cdot w } \right) \times u,
\end{aligned}
\right.
\end{equation}
where $c$ and $\varphi \in \R$, $r >0$, $n \in \Z^d$, $u = n / |n|$ and $\theta(w) := \arctan\left(\{\varphi + u \cdot w\}/r\right)$. In the case of the kinetic Euler equation, these are initial data of an exponential growing mode corresponding to the eigenvalue $\lambda = |n|(r + i \varphi)$ provided \eqref{penroseKEumeasure} holds. We suppose it is the case.

The first condition in \eqref{initialconstraints} holds for these data. Indeed, in this context (use \eqref{multiphasictokinetic}), we have to check that for all $x \in \T^d$, 
\[
\int r_0^w(x) \D \mu(w) = 0.
\]
But 
\begin{align}
\notag \int r_0^w(x) \D \mu(w) &= - c \, \Re\left( \exp(in\cdot x) \int \frac{\D \mu(w)}{(r + i \varphi + i u\cdot w)^2} \right)\\
\notag &= -c|n|^2 \Re\left( \exp(in\cdot x) \int \frac{\D \mu(w)}{(\lambda + i n\cdot w)^2} \right)\\
\label{rincompressible} &=0, 
\end{align}
by \eqref{penroseKEumeasure}.

However, the second condition in \eqref{initialconstraints} does not hold in general. In this setting, it would mean that
\[
\Div_x \left(  \int \big(w + u_0^w(x)\big) \big(1 + r_0^w(x)\big) \D \mu(w) \right)
\] 
cancels. But
\begin{align*}
\int \big(&w + u_0^w(x)\big) \big(1 + r_0^w(x)\big) \D \mu(w)\\
&= \int w \D \mu(w) \\
&\quad- c\, \Re\left( \exp(in\cdot x) \int \left\{ \frac{w}{(r + i \varphi + i u\cdot w)^2} + \frac{iu}{r + i \varphi + i u\cdot w}\right\}\D \mu(w)  \right)\\
&\quad - c^2 \int \frac{\cos\Big[n\cdot x - 2 \theta(w)\Big] \sin\Big[n\cdot x - \theta(w)\Big]}{\big( r^2 + (\varphi + u \cdot w)^2\big)^{3/2} } \D \mu(w) \times u.
\end{align*}
Taking the divergence, we get
\begin{align*}
\Div_x &\bigg(\int \big(w + u_c^w(0,x)\big) \big(1 + r_c^w(0,x)\big) \D \mu(w)\bigg)\\
&= - c|n|^2 \Re\left( \exp(in\cdot x) \int \left\{ \frac{i n \cdot w }{(\lambda + i n\cdot w)^2} - \frac{1}{\lambda + i n\cdot w}\right\}\D \mu(w)  \right)\\
&\quad -c^2 n\cdot u \int \frac{\cos\Big[2n\cdot x - 3 \theta(w)\Big]}{\big( r^2 + (\varphi + u \cdot w)^2\big)^{3/2}}\D \mu(w).
\end{align*}
But the first term can be rewritten
\begin{equation}
\label{divu=0firstorder}
c |n|^2 \Re\left( \lambda \exp(in\cdot x) \int \frac{\D \mu(w) }{(\lambda + in\cdot w)^2}\right) = 0
\end{equation} 
because of \eqref{penroseKEumeasure}. Finally, we end up with
\begin{align*}
\Div_x \bigg(\int &\big(w + u_0^w(x)\big) \big(1 + r_0^w(x)\big) \D \mu(w)\bigg)\\
& = - c^2 |n|^4 \Re\left( \exp(2 i n \cdot x) \int\frac{\D \mu(w)}{(\lambda + in\cdot w)^3}\right),
\end{align*}
which does not cancel in general. Nevertheless, the crucial point is that the first order (in $c$) cancels. We give the initial data we shall consider in the following lemma.
\begin{Lem}
\label{modifiedinitialdata}
Take $(\rr_0, \uu_0)$ the couple defined in \eqref{initialdataEGM} and suppose \eqref{penroseKEumeasure} holds with $\lambda = |n|(r + i \varphi)$. We call 
\begin{gather}
\label{defV} V := \int \left\{ r_0^w u_0^w - \int r_0^w(y) u_0^w(y) \D y \right\} \D \mu(w)  \\
\label{defutilde} \forall w \in \R^d, \quad \tilde{u}^w_0 = u_0^w - V.
\end{gather}
Then $(\rhorho_0, \tilde{\vv}_0) := (\boldsymbol{1} + \rr_0, \boldsymbol{w} + \tilde{\uu}_0)$ belongs to $\boldsymbol{L}_0$ and satisfies the multiphasic version of \eqref{initialconstraints}.
\end{Lem}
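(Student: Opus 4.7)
The plan is to verify separately the two halves of the claim: $(\rr_0, \tilde{\uu}_0) \in \boldsymbol{L}_0$ and the multiphasic analogue of \eqref{initialconstraints}. Three of the four sub-conditions are essentially already in hand. The zero spatial mean $\int_{\T^d} r_0^w(x) \D x = 0$ follows at once from \eqref{initialdataEGM}, since every Fourier mode of $r_0^w$ sits at frequency $\pm n \neq 0$. The averaged incompressibility $\int r_0^w(x) \D \mu(w) \equiv 0$ is exactly \eqref{rincompressible}, derived from the kinetic-Euler Penrose condition \eqref{penroseKEumeasure}. And $u_0^w$ is manifestly a gradient: it is a scalar function of $n\cdot x$ multiplied by the direction $u = n/|n|$ parallel to $n$, so it equals the $x$-gradient of an appropriate antiderivative.

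The first genuinely new point is to show that the correction $V$ defined in \eqref{defV} is also a gradient; I would do this by a short Fourier-support argument. Since $r_0^w$ and $u_0^w$ each have spatial Fourier support in $\{\pm n\}$, their pointwise product has support in $\{0, \pm 2n\}$; subtracting the $x$-mean $\int r_0^w(y) u_0^w(y) \D y$ kills the zero mode, and integrating against $\D\mu(w)$ preserves this. The vector direction remains $u$, which is parallel to $2n$, so $V$ is a gradient and hence so is $\tilde u_0^w = u_0^w - V$.

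The main step---and the one I expect to require the most care, though it is still short---is the divergence-free momentum constraint $\Div_x \int (w + \tilde u_0^w)(1 + r_0^w)\,\D \mu(w) \equiv 0$. The natural approach is to expand in powers of the parameter $c$ from \eqref{initialdataEGM}, writing
\[
\int (w + \tilde{u}_0^w)(1 + r_0^w)\,\D\mu(w) = \int w\,\D\mu(w) + \int\{u_0^w + w\, r_0^w\}\,\D \mu(w) + \left[\int u_0^w r_0^w\,\D\mu(w) - V\right] - V\int r_0^w \,\D \mu(w).
\]
The $O(c^0)$ constant $\int w \D\mu(w)$ has vanishing divergence. The $O(c^1)$ term $\int\{u_0^w + w\, r_0^w\}\,\D\mu(w)$ is precisely the vector whose divergence was already shown to vanish in \eqref{divu=0firstorder} via \eqref{penroseKEumeasure}. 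The $O(c^3)$ remainder $-V \int r_0^w\,\D \mu(w)$ vanishes pointwise by the averaged incompressibility just established. Finally, the $O(c^2)$ bracket $\int u_0^w r_0^w\,\D \mu(w) - V$ reduces, by the very definition \eqref{defV} of $V$, to the $x$-independent constant $\iint r_0^w(y) u_0^w(y)\,\D y\,\D \mu(w)$, which is divergence-free as well. The key structural observation underlying the lemma is thus that $V$ was chosen precisely to cancel the non-constant (i.e.\ $\pm 2n$-mode) part of the quadratic residue $\int r_0^w u_0^w\,\D \mu(w)$, which is exactly what is needed to restore the incompressibility constraint for $\tilde{\vv}_0$.
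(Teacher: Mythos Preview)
Your proof is correct and follows essentially the same route as the paper's. The only cosmetic differences are that you organize the momentum expansion by powers of $c$ (whereas the paper groups the terms as $\int\{r_0^w w + u_0^w\}\D\mu(w)$, $\int r_0^w u_0^w\,\D\mu(w)$, and $-V\int(1+r_0^w)\D\mu(w)$ and then observes that the last two have equal divergence), and that you show $V$ is a gradient via its Fourier support and direction, while the paper simply writes down an explicit antiderivative.
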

In particular, according to Theorem \ref{KEu'impliesKEu}, the solution to \eqref{KEu'} built in Theorem \ref{existenceofsolutions} starting from these data are also solution to \eqref{KEu}.
\begin{Rem}
The vector field $V$ is a quadratic function of $(\rr, \uu)$. We can even give the following explicit formula using \eqref{initialdataEGM} and trigonometric identities. For all $x \in \T^d$, we have
\begin{equation}
\label{formulaV}
V(x) = -\frac{ c^2}{2}  \int \frac{\sin[2n\cdot x - 3 \theta(w)]}{\big( r^2 + (\varphi + u \cdot w)^2\big)^{3/2} } \D \mu(w) \times u.
\end{equation}
\end{Rem}
\begin{proof}
The first condition in \eqref{initialconstraints} only involves $\rr_0$, which is unchanged, and has already been checked in \eqref{rincompressible}. We just have to check the second one. We have
\begin{align*}
\Div_x &\left(\int (1 + r_0^w)(w + \tilde{u}_0^w) \D \mu(w)\right)\\
 &= \Div_x \left(\int (1 + r_0^w)(w + u_0^w - V) \D \mu(w)\right) \\
&= \Div_x \left(\int \{ r_0^w w + u_0^w \} \D \mu(w)\right) + \Div_x \left(\int r_0^w u_0^w  \D \mu(w)\right)\\
&\quad - \Div_x \left( V \int (1 + r_0^w) \D \mu(w)\right).
\end{align*}
We have checked in \eqref{divu=0firstorder} that the first term cancels. The second one equals the third one because of \eqref{rincompressible} and because by the definition of $V$,
\[ 
\Div V = \Div \left( \int r_0^w u_0^w \D \mu(w)  \right).
\]
Finally, $(0,V) \in \boldsymbol{L}_0$ because $V$ is the gradient of the function defined for all $x \in \T^d$ by:
\[
\frac{c^2}{4 |n|} \int \frac{\cos[2n\cdot x - 3 \theta(w)]}{\big( r^2 + (\varphi + u \cdot w)^2\big)^{3/2}}\D \mu(w).
\]
\end{proof}
\subsubsection*{Stability of Theorem \ref{illposednesstheorem}} We are now ready to state and prove Theorem \ref{illposednesstheorem} in the case of the kinetic Euler equation. Of course as in the previous cases, this theorem has a kinetic version that implies Theorem \ref{kineticstatementKEuVB} in the kinetic Euler case.
\begin{Thm}
\label{rigorousstatementKEu}
Take $\mu$ an unstable profile for the kinetic Euler equation (satisfying the Penrose condition \eqref{penroseKEumeasure}), $s \in \N$ and $\alpha \in (0,1]$. Consider $(\rhorho^k_0, \vv^k_0)_{k \in \N}$ and $(T_k)_{k \in \N}$ the families of data and times given by Theorem \ref{illposednesstheorem}. 

Then for all $k$, the unique analytic solution $(\tilde{\rhorho}^k, \tilde{\vv}^k)$ to the multiphasic kinetic Euler equation starting from $(\rhorho_0^k,  \tilde{\vv}^k_0)$ is defined up to time $T_k$ ($\tilde{\vv}^k_0$ being chosen as in Lemma \ref{modifiedinitialdata}). This family of solutions still satisfy the asymptotics \eqref{almostLyapounov} and \eqref{growthpotential}, and $(T_k)_{k \in \N}$ still satisfies \eqref{eq:estim_Tk_KEu_VB_abstract}.
\end{Thm}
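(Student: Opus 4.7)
The strategy is to repeat the proof of Theorem~\ref{illposednesstheorem} with the corrected initial data, treating the quadratic perturbation $-V^k$ as absorbable at every step. First I would invoke Theorem~\ref{existenceofsolutions} applied to $(\rhorho_0^k, \tilde{\vv}_0^k)$, producing an analytic solution on $[0,T_k]$ with a nonlinear remainder $(\tilde{\sigsig}^k, \tilde{\xixi}^k)$; by Lemma~\ref{modifiedinitialdata} the multiphasic version of \eqref{initialconstraints} holds at $t = 0$, so Theorem~\ref{KEu'impliesKEu} upgrades this solution of the multiphasic \eqref{KEu'} to a genuine solution of the multiphasic \eqref{KEu}. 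This takes care of the existence statement.

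To check that Theorem~\ref{existenceofsolutions} can be applied in the first place, I read off from the explicit expression \eqref{formulaV} that $V^k$ lives on Fourier modes $\pm 2 n_k$ with coefficients of size $c_k^2$, whence $\| \DD V^k \|_{\Gamma T_k} \lesssim c_k^2 |n_k| \exp(2|n_k|\Gamma T_k)$. Combined with the calibration $c_k |n_k| \exp(|n_k|\Gamma T_k) = c_k^{\alpha/4}$ of \eqref{defTkbis}, this simplifies to $c_k^{\alpha/2}/|n_k|$, negligible against $\| \DD \rr_0^k, \DD \uu_0^k \|_{\Gamma T_k} \lesssim c_k^{\alpha/4}$; so the smallness assumption \eqref{initialdatasmall} is preserved for large $k$, and the analogue of \eqref{estimsigmaxiillposedness} for $(\tilde{\sigsig}^k, \tilde{\xixi}^k)$ follows from the conclusion of Theorem~\ref{existenceofsolutions}.

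For the asymptotics I would split the new linear solution as $S_t(\rr_0^k, \tilde{\uu}_0^k) = (\rr^k(t), \uu^k(t)) - S_t(\boldsymbol{0}, V^k)$, where the first term is the exponential growing mode whose $L^1_{t,x}$-size was bounded below by $\gtrsim c_k^a$ in \eqref{estimlinearbis}. For the new linear correction, Theorem~\ref{sharpestimate} and Lemma~\ref{lemdeltatodelta'} yield
\[
\|S_t(\boldsymbol{0}, V^k)\|_0 \lesssim \frac{c_k^{\alpha/2}}{|n_k|^2}\exp\bigl(-\Gamma(T_k - t)\bigr),
\]
whose time integral over $[0, T_k]$ is $O(c_k^{\alpha/2}/|n_k|^2) = o(c_k^a)$ thanks to \eqref{defGammalindomin}. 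The nonlinear remainder obeys the same $c_k^{\alpha/2}$ bound as $(\sigsig^k, \xixi^k)$ and is handled as in \eqref{estimrestbis}. The denominator in \eqref{almostLyapounov} only picks up an extra $\|V^k\|_{W^{s,\infty}} \lesssim c_k^2 |n_k|^s$, subleading against the $c_k|n_k|^s$ of \eqref{estiminitialdatabis}, and the initial-data size in \eqref{eq:estim_Tk_KEu_VB_abstract} is likewise unchanged at leading order because $\|V^k\|_{L^1} \lesssim c_k^2$. Finally \eqref{growthpotential} is obtained through the decomposition \eqref{decompositionofpotential}, in which the new linear contribution $V[\rr_V^k, \uu_V^k]$ inherits the same $O(c_k^{\alpha/2}/|n_k|^2)$ bound and is also subleading.

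The one delicate point is that $V^k$ is supported at frequency $2 n_k$, so its semigroup evolution could \emph{a priori} amplify at rate $2|n_k|\gamma_0$, that is, at twice the growth rate of the leading exponential mode; however, the quadratic prefactor $c_k^2$ produced by \eqref{formulaV} precisely compensates the doubled exponent, which is what is recorded in the crucial bound $\|V^k\|_{\Gamma T_k} \lesssim c_k^{\alpha/2}/|n_k|^2$. This is the only place where the sharpness of the calibration \eqref{defTkbis} together with the inequality $a < \alpha/2$ of \eqref{defGammalindomin} is really used, and everything else is a verbatim repetition of the argument of Theorem~\ref{illposednesstheorem}.
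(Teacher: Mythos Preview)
Your proposal is correct and follows essentially the same route as the paper: verify the smallness hypothesis \eqref{initialdatasmall} for the corrected data, split the linear flow as the exponential growing mode plus $S_t(\boldsymbol{0},-V^k)$, and show this correction is subleading both in $W^{s,\infty}$ at $t=0$ and in $L^1_{t,x}$. The only minor technical difference is that the paper bounds $\|S_t(\boldsymbol{0},V^k)\|_0$ by applying \eqref{estimS} with the moving window $\delta_0=\Gamma t$ and then reading $|V^k|_{\Gamma t}\lesssim c_k^2\exp(2|n_k|\Gamma t)$ directly from \eqref{formulaV}, yielding $c_k^{\alpha/2}/|n_k|^3$ after integration, whereas you fix $\delta_0=\Gamma T_k$ and descend to level $0$ via Lemma~\ref{lemdeltatodelta'}, obtaining $c_k^{\alpha/2}/|n_k|^2$; both are $o(c_k^a)$ by \eqref{defGammalindomin}, so either works.
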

\begin{Rem}
In particular, thanks to Theorem \ref{KEu'impliesKEu} and Lemma \ref{modifiedinitialdata}, Theorem \ref{illposednesstheorem} holds for Equation \eqref{KEu}, and not only for \eqref{KEu'}.
\end{Rem}
\begin{proof}
Let us take $(n_k)_{k \in \N}$, $(\lambda_k)_{k \in \N}$, $(c_k)_{k \in \N}$, $\beta$, $\gamma$, $\Gamma$ and $(T_k)_{k \in \N}$ as in the proof of Theorem \ref{illposednesstheorem}.

First, let us check that for $k$ sufficiently large, there exist a multiphasic solution to \eqref{KEu'} starting from $(\rhorho_0^k, \tilde{\vv}^k_0)$ up to time $T_k$. To use Theorem \ref{existenceofsolutions}, we need to check condition \eqref{initialdatasmall}. We just have to consider the velocity part because the density part is unchanged. Using the notations of Lemma \ref{modifiedinitialdata} with the index $k$ ($V_k$ is defined in \eqref{defV} and $\tilde{\uu}_0^k$ is defined in \eqref{defutilde}), we have (using \eqref{formulaV}):
\begin{align*}
\| \DD \tilde{\uu}^k_0 \|_{\Gamma T_k} &\leq \| \DD \uu^k_0 \|_{\Gamma T_k} + |\DD V_k|_{\Gamma T_k}\\
&\lesssim c_k|n_k| \exp(|n_k| \Gamma T_k) +  c_k^2 |n_k| \exp(2|n_k| \gamma T_k)\\
&\lesssim \frac{c_k^{\alpha/4}}{|n_k|} + \frac{c_k^{\alpha/2}}{ |n_k|},
\end{align*}
the last line being obtained thanks to \eqref{defTkbis}. In particular, if $k$ is sufficiently large, $\| \tilde{\uu}^k_0 \|_{\Gamma T_k} \leq \eps_0$. So for such $k$, Theorem \ref{existenceofsolutions} guarantees the existence of a unique analytic multiphasic solution $(\tilde{\rhorho}_0^k, \tilde{\vv}^k)$ to \eqref{KEu'} up to time $T_k$. It has the following form: for all $w \in \R^d$,
\begin{gather*}
\tilde{\rho}^{k,w} = 1 + r^{k,w} + (\tilde{r}^{k,w} - r^{k,w}) + \tilde{\sigma}^{k,w},\\
\tilde{v}^{k,w} = w + u^{k,w} + (\tilde{u}^{k,w} - u^{k,w}) + \tilde{\xi}^{k,w},
\end{gather*}
where:
\begin{itemize}
\item $(\rr^k(t), \uu^k(t)) = S_t(\rr^k_0, \uu^k_0)$ is given by \eqref{explicitlinearsolutionk}, 
\item $(\tilde{\rr}^k(t), \tilde{\uu}^k(t)) = S_t(\rr_0^k, \tilde{\uu}^k_0)$,
\item $(\tilde{\sigsig}^k, \tilde{\xixi}^k)$ satisfies the same estimates as $(\sigsig^k, \xixi^k)$ in the proof of Theorem \ref{illposednesstheorem}.
\end{itemize}
Comparing with the proof of Theorem \ref{illposednesstheorem}, we just need to show that the additional term
\[
(\tilde{\rr}^k(t), \tilde{\uu}^k(t)) - (\rr^k(t), \uu^k(t)) = S_t(0,V_k)
\]
is negligible both in the estimate of the initial condition and in the $L^1$ estimate. Thus, the two things we have to prove are:
\begin{itemize}
\item for the initial condition
\[
\| V_k \|_{W^{s,\infty}} = \underset{k \to + \infty}{o}\big(c_k^{1-s/\beta}\big),
\]
\item for the $L^1$ estimate
\[
\| S_t(0,V_k) \|_{L^1((0,T_k) \times \T^d)} = \underset{k \to + \infty}{o}(c_k^a),
\]
where $a$ is defined in \eqref{defGammanumdomin}.
\end{itemize}
For the first one, thanks to \eqref{formulaV} and \eqref{defckbeta},
\[
\|V_k\|_{W^{s,\infty}} \lesssim c_k^2 \times  2^s |n_k|^s \lesssim (c_k^{1 - s/\beta})^2 = \underset{k \to + \infty}{o}\big(c_k^{1-s/\beta}\big).
\]
For the second one,
\begin{align*}
\| S_t(0, V_k) \|_{L^1_{t,x}} & \leq \int_0^{T_k} \| S_t(0, V_k) \|_0 \D t \\
&\lesssim \int_0^{T_k} |V_k|_{\Gamma t} \D t &\mbox{by \eqref{sharpestimate}},\\
&\lesssim c_k^2 \int_0^{T_k} \exp(2 |n_k| \Gamma t) \D t &\mbox{by \eqref{formulaV}},\\
&\lesssim \frac{c_k^2}{|n_k|} \exp(2|n_k| \Gamma T_k)\\
&\lesssim \frac{c_k^{\alpha/2}}{|n_k|^3} &\mbox{by \eqref{defTkbis}},\\
&\lesssim c_k^{\alpha/2 + 3/\beta} = \underset{k \to + \infty}{o}\big(c_k^a\big) &\mbox{by \eqref{defGammalindomin}}.
\end{align*}
This concludes the proof.
\end{proof}
\noindent \textbf{Acknowledgments.} This work is part of my PhD thesis supervised by Yann Brenier and Daniel Han-Kwan. I would like to thank both of them for the careful reading and advice. I would also like to thank Toan Nguyen for answering my questions on semigroups.
\begin{appendices}
\section{Superpositions of Diracs are unstable}
\label{penrosediracs}
In this section, we give ourselves an integer $p \geq 2$, $p$ positive numbers $\alpha_1, \dots, \alpha_p$, and $p$ distinct points of $\R^d$, $a_1, \dots, a_p$. We define
\[
\mu := \alpha_1 \delta_{a_1} + \dots + \alpha_p \delta_{a_p}.
\]
The measure $\mu$ is unstable for the three models presented in the introduction. (In the case of the kinetic Euler equation, we must have $\alpha_1 + \dots + \alpha_p = 1$.)
\begin{Thm}
The measure $\mu$ is unstable in the sense of the Penrose conditions \eqref{penroseVPmeasure}, \eqref{penroseKEumeasure} and \eqref{penroseVBmeasure}.
\end{Thm}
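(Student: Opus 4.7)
The plan is to rewrite the three Penrose conditions \eqref{penroseVPmeasure}, \eqref{penroseKEumeasure}, \eqref{penroseVBmeasure} in the unified form $F_n(\lambda) = c$, where
\[
F_n(\lambda) := \sum_{j=1}^p \frac{\alpha_j}{(\lambda + i n\cdot a_j)^2}
\]
and $c \in \{-1,\,0,\,-|n|^{-2}\}$ respectively, and then to locate one of its complex roots in the open right half-plane by a symmetry and counting argument. First I would pick $n \in \Z^d$ so that the $b_j := n\cdot a_j$ are pairwise distinct; this only requires avoiding a countable union of proper sublattices of $\Z^d$. Once this is done, $F_n - c$ becomes, after clearing denominators, a polynomial of degree $2(p-1)$ when $c=0$ and $2p$ otherwise.

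The two structural ingredients are (i) the conjugation symmetry $\overline{F_n(\lambda)} = F_n(-\bar\lambda)$, which shows that for real $c$ the complex roots of $F_n - c$ are permuted by $\lambda \mapsto -\bar\lambda$, so that off-axis roots automatically come in symmetric pairs contributing one root to each open half-plane; and (ii) the identity $F_n(it) = -G(t)$ on the imaginary axis, where $G(t) := \sum_j \alpha_j/(t+b_j)^2$ is strictly positive on $\R \setminus \{-b_j\}$, so that imaginary-axis roots of $F_n - c$ are in bijection with real solutions of $G(t) = -c$. Applied to the kinetic Euler case ($c=0$), these two observations give the result immediately: since $G>0$, $F_n$ has no imaginary zero, its $2(p-1)$ complex zeros pair up via $\lambda \leftrightarrow -\bar\lambda$, and at least $p-1 \geq 1$ of them lie in the open right half-plane.

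For the Vlasov--Benney and Vlasov--Poisson cases the plan is to count real solutions of $G(t) = -c$. The function $G$ blows up to $+\infty$ at each pole $-b_j$ and decays to $0$ at $\pm\infty$, and it attains a positive local minimum $m_k$ on each of the $p-1$ bounded inter-pole intervals; an elementary case check gives exactly $2 + 2\cdot\#\{k : m_k \leq -c\}$ real solutions of $G = -c$. Consequently at least $2(p-1-\#\{k:m_k \leq -c\})$ roots of $F_n - c$ sit off the imaginary axis, and by the symmetry in (i) at least one unstable root appears \emph{as soon as one} $m_k$ exceeds $-c$. The concluding step is therefore to choose $n$ achieving $m_k > -c$ for just a single $k$. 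The lower bound $m_k \gtrsim \min(\alpha_i,\alpha_j)/|b_i - b_j|^2$ (obtained by retaining only the two terms whose poles bound the $k$-th interval) reduces this to making some pair $|n\cdot(a_i - a_j)|$ small but nonzero, which is accomplished by a Dirichlet-type density argument on integer directions in $S^{d-1}$.

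The main obstacle is precisely this last step in the Vlasov--Poisson setting. For VB one has $-c = |n|^{-2}$, and the substitution $t = |n| s$ turns the requirement $m_k > |n|^{-2}$ into a condition that depends only on $n/|n|$, for which the density argument is straightforward. For VP the target $-c = 1$ is fixed (the equation has no scale invariance, unlike VB and KEu), so the density argument must be made quantitative enough to force $|n\cdot(a_i - a_j)| < \sqrt{\min(\alpha_i,\alpha_j)}$ for at least one pair; this is where the proof will need the most care, the rest being bookkeeping of roots via the symmetry and a degree count.
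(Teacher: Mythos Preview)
Your overall strategy --- clear denominators to get a polynomial, use the symmetry $\lambda\mapsto -\bar\lambda$, and rule out purely imaginary roots --- is exactly the paper's. For the kinetic Euler case your argument is correct and coincides with the paper's: $F_n(it)=-G(t)<0$, so there are no imaginary zeros, and the symmetry gives an unstable root.

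Where you and the paper diverge is in the treatment of $c<0$ (VP and VB). The paper \emph{claims} that $P(ix)\neq 0$ for every real $x$ and every $e\ge 0$, via the line
\[
P(ix)=-\sum_k\alpha_k\prod_{l\neq k}(x+n\cdot a_l)^2-e\prod_k(x+n\cdot a_k)^2<0.
\]
This is a miscomputation: since $(ix+in\cdot a_l)^2=-(x+n\cdot a_l)^2$, one actually gets
\[
P(ix)=(-1)^{p-1}\Big[\sum_k\alpha_k\prod_{l\neq k}(x+n\cdot a_l)^2-e\prod_k(x+n\cdot a_k)^2\Big],
\]
and the bracket vanishes precisely when $G(x)=e$, which (for $e>0$) does happen. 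So you were right to worry about imaginary roots, and the paper's argument is flawed at this point.

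More importantly, the obstacle you isolate in the VP case is not merely technical: the statement itself is false for VP and VB in dimension $d=1$. Take $\mu=\tfrac12\delta_{-a}+\tfrac12\delta_a$ with $a\ge 1$. For VP with any $n\in\Z\setminus\{0\}$ the Penrose relation becomes, after setting $z=\lambda^2$,
\[
z^2+(2n^2a^2+1)z+n^4a^4-n^2a^2=0,
\]
whose two roots are real and $\le 0$ exactly when $na\ge 1$; hence every $\lambda$ is purely imaginary. For VB the substitution $\lambda=n\mu$ reduces to the same equation with $n=1$, so again no unstable mode when $a\ge 1$. Thus your Dirichlet idea cannot work in $d=1$ (you can never make $|n\cdot(a_1-a_2)|=|n|\,|a_1-a_2|$ small), and no argument can: the theorem as stated does not hold for VP or VB on $\T^1$ with well-separated Diracs. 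Your KEu proof stands; for VP and VB the correct conclusion requires either $d\ge 2$ (where your density argument can be made to work) or an additional smallness assumption on the spacings.
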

\begin{proof}
Take $n \in \Z^d$ such that $n \cdot a_1, \dots, n \cdot a_p$ are distinct. We will show that whatever $e \in \R_+$, there exists $\lambda \in \C$ with $\Re(\lambda) >0$ such that
\begin{equation}
\label{discretepenrose}
\int \frac{\D \mu(w)}{(\lambda + in \cdot w)^2}= \sum_{k=1}^p \frac{\alpha_k}{(\lambda + in \cdot a_k)^2} =-e.
\end{equation}
Applying this property to $e = 1$, $0$ and $1/|n|^2$, we get the Penrose conditions \eqref{penroseVPmeasure}, \eqref{penroseKEumeasure} and \eqref{penroseVBmeasure} respectively.

For all $\lambda \in \C$ with $\Re(\lambda) \neq 0$, \eqref{discretepenrose} holds if and only if $P(\lambda)=0$, where $P$ is the following polynomial:
\[
P(X) := \sum_{k=1}^p \alpha_k \prod_{l \neq k} (X + in\cdot a_l)^2 + e \prod_{k=1}^p (X + i n \cdot a_k)^2.
\]
This polynomial is of degree $2p$ if $e \neq 0$ and $2(p-1)$ if $e=0$. In each case, according to the fundamental theorem of algebra, it admits at least one complex root $z$.

If $z = ix$ with $x \in \R$, then either there is exactly one $k_0$ such that $x = -n\cdot a_{k_0}$ or for all $l$, $x \neq -n\cdot a_l$ (if so, we set $k_0:=1$). In each case, for all $l \neq k_0$, $x \neq -n \cdot a_l$, so
\begin{align*}
P(ix) &= - \sum_{k=1}^p \alpha_k \prod_{l \neq k} (x + n\cdot a_l)^2 - e \prod_{k=1}^p (x + n \cdot a_k)^2 \\
&\leq - \alpha_{k_0} \prod_{l \neq k_0} (x + n\cdot a_l)^2 \\
&< 0.
\end{align*}
Hence, we get a contradiction and $\Re(z) \neq 0$. 

Moreover remark that $P(-\overline{X}) = \overline{P(X)}$. So we also have $P(-\overline{z}) = 0$. But necessarily, $\Re(z) > 0$ or $\Re(-\overline{z})>0$. We conclude that there is $\lambda \in \C$ with $\Re(\lambda)>0$ with $P(\lambda) = 0$. For this $\lambda$, \eqref{discretepenrose} holds.
\end{proof}

\section{Proofs of the properties of the analytic norms}
\label{appendixproof}
We give in this appendix the proofs of the results stated in Subsection \ref{propertiesanalyticnorms}.
\begin{proof}[Proof of Proposition \ref{product}]
We have for all $w \in \R^d$ and $x \in \T^d$:
\[
f^w(x) = \sum_{k \in \Z^d} \hat{f}_k(w) \exp(i k \cdot x) \quad \mbox{and} \quad g^w(x) = \sum_{l \in \Z^d} \hat{g}_l(w) \exp(i l \cdot x),
\]
with for all $n$ $\hat{f}_n$ and $\hat{g}_n$ in $L^{\infty}$. Consequently,
\[
f^w(x) g^w(x) = \sum_{n \in \Z^d} \left(\sum_{k + l = n} \hat{f}_k(w) \hat{g}_l(w) \right) \exp(in\cdot x).
\]
It follows with \eqref{defdoublenorm} that
\begin{align*}
\| \FF \GG \|_{\delta} &= \sum_{n \in \Z^d} \left| \sum_{k + l = n} \hat{f}_k \hat{g}_l \right|_{\infty} \exp(\delta |n|)\\
&\leq \sum_{n \in \Z^d} \sum_{k + l = n} |\hat{f}_k|_{\infty} \exp(\delta |k|) |\hat{g}_l|_{\infty} \exp(\delta |l|) \\
&\leq \left( \sum_{k \in \Z^d} |\hat{f}_k|_{\infty} \exp(\delta |k|) \right) \left( \sum_{l \in \Z^d} |\hat{g}_l|_{\infty} \exp(\delta |l|) \right) \\
&= \| \FF \|_{\delta} \| \GG \|_{\delta}.
\end{align*}
\end{proof}

\begin{proof}[Proof of Proposition \ref{propestimnabla}]
For all $w \in \R^d$ and $x \in \T^d$,
\[
f^w(x) = \sum_{n \in \Z^d} \hat{f}_n(w) \exp(i n \cdot x),
\]
with for all $n$, $\hat{f}_n \in L^{\infty}$. Consequently, if $\delta' < \delta$,
\begin{align*}
\| \DD f \|_{\delta'} &= \sum_{n \in \Z^d} |n| |\hat{f}_n|_{\infty} \exp(\delta' |n|) \\
&=  \sum_{n \in \Z^d} |\hat{f}_n|_{\infty} \exp(\delta |n|) \times |n| \exp\big(-(\delta - \delta')|n| \big)\\
&\leq \left( \sum_{n \in \Z^d} |\hat{f}_n|_{\infty} \exp(\delta |n|) \right)\times \sup_{a \in \R_+} \Big\{ a \exp\big(-(\delta - \delta')a \big) \Big\}\\
&\leq \frac{1}{\delta - \delta'} \| \FF \|_{\delta}.
\end{align*}
\end{proof}

\begin{proof}[Proof of Proposition \ref{propestimchainrule}]
We have already seen in the proof of Proposition \ref{product} that with the same notations, for all $x \in \T^d$ and $w \in \R^d$,
\[
f^w(x) g^w(x) = \sum_{n \in \Z^d} \left(\sum_{k + l = n} \hat{f}_k(w) \hat{g}_l(w) \right) \exp(in\cdot x).
\]
So
\begin{align*}
\| \DD\, (\FF \GG) \|_{\delta} &= \sum_{n \in \Z^d} |n| \left|\sum_{k + l = n} \hat{f}_k \hat{g}_l \right|_{\infty} \exp(\delta |n|)\\
&\leq \sum_{n \in \Z^d}  \sum_{k + l = n} \big( |k| + |l| \big) |\hat{f}_k|_{\infty} |\hat{g}_l|_{\infty} \exp(\delta |n|)\\
&= \sum_{n \in \Z^d}  \sum_{k + l = n} \Big\{|k| |\hat{f}_k|_{\infty} |\hat{g}_l|_{\infty} + |\hat{f}_k|_{\infty} |l| |\hat{g}_l|_{\infty} \Big\} \exp(\delta |k|) \exp(\delta |l|) \\
&= \sum_{k \in \Z^d} |k| |\hat{f}_k|_{\infty}e^{\delta |k|} \sum_{l \in \Z^d}|\hat{g}_l|_{\infty}e^{\delta |l|}+\sum_{k \in \Z^d}  |\hat{f}_k|_{\infty}e^{\delta |k|}\sum_{l \in \Z^d} |l| |\hat{g}_k|_{\infty}e^{\delta |l|}\\
&= \| \FF \|_{\delta} \| \DD \GG \|_{\delta} + \| \GG \|_{\delta} \| \DD \FF \|_{\delta}.
\end{align*}
Inequality \eqref{estimiteratechainrule} is simply obtained by induction on $|\alpha| + |\beta|$.
\end{proof}

\begin{proof}[Proof of Proposition \ref{propestimA}]
For all $x \in \T^d$,
\[
f(x) = \sum_{n \in \Z^d} \hat{f}_n \exp(i n \cdot x),
\]
with for all $n$ $\hat{f}_n \in E$. Consequently
\[
\nabla A f (x) = -i \sum_{n \in \Z^d} P(n) \cdot \hat{f}_n \exp(in\cdot x) n.
\]
In particular, using \eqref{assumptionP},
\begin{align*}
| \nabla  A f |_{\delta} &= \sum_{n \in \Z^d} |n| |P(n) \cdot \hat{f}_n| \exp(\delta |n|)\\
&\leq M \sum_{n \in \Z^d} |n| |\hat{f}_n| \exp(\delta|n|)\\
&\leq M|\DD f|_{\delta}.
\end{align*}
\end{proof}
\begin{proof}[Proof of Lemma \ref{lemDS=SD}]
The unique classical solution to \eqref{abstractlineq} is given by \eqref{rufourier} with $(\hat{\rr}_n, \hat{\uu}_n)$ given by Lemma \ref{fourierlemma}. As a consequence,
\begin{align*}
\big\| \DD\, (\rr(t), \uu(t)) \big\|_{\delta - \gamma t} &\leq \sum_{n \in \Z^d} |n| |(\hat{r}_n(t),\hat{u}_n(t))|_{\infty} \exp\Big(\{\delta - \gamma t\}|n|\Big) \\
&\leq C \sum_{n \in \Z^d}|n| |(\hat{r}_n(0),\hat{u}_n(0))|_{\infty}  \exp(\delta|n|) \\
&\leq C \|\DD \, (\rr_0, \uu_0)\|_{\delta}.
\end{align*}
\end{proof}

\begin{proof}[Proof of Lemma \ref{lemdeltatodelta'}]
For all $w \in \R^d$ and $x \in \T^d$,
\[
f^w(x) = \sum_{n \in \Z^d} \hat{f}_n(w) \exp(i n \cdot x),
\]
with for all $n$, $\hat{f}_n \in L^{\infty}$. But by \eqref{zeromass}, $\hat{f}_0(w) = 0$ for all $w \in \R^d$. Consequently, if $\delta' < \delta$,
\begin{align*}
\| f \|_{\delta'} &= \sum_{n \in \Z^d\backslash \{0\}} |\hat{f}_n|_{\infty} \exp(\delta' |n|) \\
&=  \sum_{n \in \Z^d\backslash \{0\}} |\hat{f}_n|_{\infty} \exp(\delta |n|) \times \exp\big(-(\delta - \delta')|n| \big)\\
&\leq \left( \sum_{n \in \Z^d} |\hat{f}_n|_{\infty} \exp(\delta |n|) \right)\times \exp \big(-(\delta - \delta')\big)\\
&= \frac{\| \FF \|_{\delta}}{\exp(\delta - \delta')} .
\end{align*}
\end{proof}

\begin{proof}[Proof of Lemma \ref{lemestimPhi}]
To prove these three estimates, we only need to consider the case when $\Phi : w \mapsto w^k$ for some $k \in \N^d$. For the general case, it suffices then to multiply the inequalities obtained by $|a_k|$ and to sum over $k$. For $i \in \{1, \dots, d\}$, we denote by $\1_i \in \R^d$ the vector whose only nonzero coordinate is a one at position $i$. Then, if $\alpha$ and $\beta \in \N^d$ are such that $\alpha + \beta = k - \1_i$, we chose $\gamma^i_{\alpha, \beta} \in \N$ in such a way that for all $X$ and $Y \in \R^d$,
\begin{equation}
\label{bernoulli}
X^k - Y^k = \sum_{i=1}^d(X_i - Y_i) \sum_{\alpha + \beta = k -\1_i} \gamma^i_{\alpha, \beta}X^{\alpha} Y^{\beta}.
\end{equation}
(Use Bernoulli's formula to find such $\gamma^i_{\alpha, \beta}$.) Up to now, we omit to specify in each line $i = 1, \dots, d$ and $\alpha + \beta = k - \1_i$. Remark that taking for $h \in \R$, $X = (1 + h) \1_i $ and $Y = \1_i$, the previous formula leads to
\[
(1 + h)^{k_i} - 1 = h \sum_{\alpha, \beta} \gamma^i_{\alpha, \beta} (1 + h)^{\alpha_i}.
\]
Derivating at $h=0$, we obtain
\begin{equation}
\label{gammaki}
\sum_{\alpha, \beta} \gamma^i_{\alpha, \beta} = k_i.
\end{equation}
In particular, summing over $i$,
\begin{equation}
\label{gammak}
\sum_{i, \alpha, \beta} \gamma^i_{\alpha, \beta} = |k|.
\end{equation}
\underline{Proof of \eqref{Phi-Phi}}. We want to show
\[
|(a + f)^k - (a + g)^k|_{\delta} \leq |f-g|_{\delta} \times |k| (|a| + |f,g|_{\delta})^{|k|-1}.
\]
But by \eqref{bernoulli}, the triangle inequality and Proposition \ref{product},
\begin{align*}
|(a + f)^k - (a + g)^k|_{\delta} &= \left| \sum_i (f_i - g_i) \sum_{\alpha, \beta} \gamma^i_{\alpha, \beta} (a + f)^{\alpha} (a + g)^{\beta} \right|_{\delta}\\
&\leq |f-g|_{\delta} \sum_{i, \alpha, \beta }\gamma^i_{\alpha, \beta} (|a| + |f|_{\delta})^{|\alpha|} (|a| + |g|_{\delta})^{|\beta|}\\
&\leq |f-g|_{\delta} (|a| + |f,g|_{\delta})^{|k| - 1} \sum_{i, \alpha, \beta} \gamma^i_{\alpha, \beta}\\
&= |f-g|_{\delta} \times |k| (|a| + |f,g|_{\delta})^{|k| - 1},
\end{align*}
the last line being obtained by \eqref{gammak}.\\
\underline{Proof of \eqref{DPhi}}. To prove
\[
|\DD \,(a + f)^k|_{\delta} \leq |k| |\DD f|_{\delta} (|a| + |f|)^{|k|-1},
\]
it suffices to develop $(a + f)^k$, use the triangle inequality and the fact that for all $\alpha \in \N^d$,
\[
|\DD f^{\alpha}|_{\delta} \leq |\alpha| |f|_{\delta}^{|\alpha|-1} |\DD f|_{\delta}, 
\]
which is \eqref{estimiteratechainrule} of Proposition \ref{propestimchainrule} with $\beta = 0$. One can then re-factorize and get the result. The proof of \eqref{Phi-Phi-dPhi} follows the same path.\\
\underline{Proof of \eqref{DPhi-Phi}}. Here we need to prove
\[\begin{aligned}
|\DD \, \{(a + f)^k - (a + g)^k\}|_{\delta} \leq |f&-g|_{\delta} |\DD f, \DD g|_{\delta} \times |k| (|k| - 1) (|a| + |f,g|_{\delta})^{|k| - 2}\\
& + |\DD\,(f-g)|_{\delta} \times |k| (|a| + |f,g|_{\delta})^{|k|-1} .
\end{aligned}
\]
but using \eqref{bernoulli} and then Proposition \ref{propestimchainrule}, we get
\begin{align*}
|\DD \, \{(a + f)^k - (a + g)^k\}|_{\delta} &\leq \sum_{i, \alpha, \beta} \gamma^i_{\alpha, \beta} |\DD\,\{ (f_i - g_i)(a + f)^{\alpha} (a + g)^{\beta} \}|_{\delta}\\
&\leq |f-g|_{\delta} \underbrace{\sum_{i, \alpha, \beta} \gamma^i_{\alpha, \beta} |\DD\,\{(a + f)^{\alpha} (a + g)^{\beta} \}|_{\delta}}_{:= S_1}\\
&\quad  + |\DD\, (f-g)|_{\delta} \underbrace{ \sum_{i, \alpha, \beta} \gamma^i_{\alpha, \beta} |(a + f)^{\alpha} (a + g)^{\beta}|_{\delta}}_{:= S_2}.
\end{align*}
To estimate $S_1$, remark that thanks to Proposition \ref{propestimchainrule}, for all $\alpha$ and $\beta$ such that $|\alpha| + |\beta| = |k|-1$,
\begin{align*}
|\DD\,\{(a + f)^{\alpha} (a + g)^{\beta} \}|_{\delta} &\leq |\alpha| (|a| + |f|_{\delta})^{|\alpha| - 1} (|a| + |g|_{\delta})^{|\beta|} |\DD f|_{\delta} \\
&\qquad + |\beta| (|a| + |f|_{\delta})^{|\alpha|} (|a| + |g|_{\delta})^{|\beta|-1} |\DD g|_{\delta}\\
&\leq (|\alpha| + |\beta|) (|a| + |f,g|_{\delta})^{|\alpha| + |\beta| - 1} |\DD f, \DD g|_{\delta} \\
&= (|k|-1) (|a| + |f,g|_{\delta})^{|k|-2}|\DD f, \DD g|_{\delta}.
\end{align*}
It remains to sum over $\alpha, \beta$ and to use \eqref{gammak} to get
\[
S_1 \leq |k|(|k|-1) (|a| + |f,g|_{\delta})^{|k|-2}|\DD f, \DD g|_{\delta}.
\]
The sum $S_2$ is estimated in the same way than in the proof of \eqref{Phi-Phi}. The result follows.\\
\underline{Proof of \eqref{DPhi-Phi-dPhi}}. At last, we have to show
\begin{align*}
\Big|\DD \, \{ &(a + f)^k - (a + g)^k - \sum_i k_i a^{k - \1_i} (f_i - g_i)\Big|_{\delta} \\
&\leq \{ |\DD \, (f-g)|_{\delta}|f,g|_{\delta} +  |f-g|_{\delta} |\DD f, \DD g|_{\delta} \} |k|(|k| - 1) (|a| + |f,g|_{\delta})^{|k| - 2}.
\end{align*}
Thanks to \eqref{bernoulli} and \eqref{gammaki},
\begin{align*}
\Big|\DD \, \{ &(a + f)^k - (a + g)^k - \sum_i k_i a^{k - \1_i} (f_i - g_i)\Big|_{\delta} \\
& = \Big| \DD \, \sum_i (f_i - g_i) \sum_{\alpha, \beta} \gamma^i_{\alpha, \beta} \Big( (a + f)^{\alpha}(a + g)^{\beta} - a^{\alpha + \beta} \Big)\Big|_{\delta}.
\end{align*}
By similar computations than before,
\begin{align*}
\Big|\DD \, \{ &(a + f)^k - (a + g)^k - \sum_i k_i a^{k - \1_i} (f_i - g_i)\Big|_{\delta} \\
&\leq  |f-g|_{\delta} \underbrace{ \sum_{i, \alpha, \beta} \gamma^i_{\alpha, \beta} |\DD \, \{ (a + f)^{\alpha}(a + g)^{\beta} \} |_{\delta}}_{:= S_1} \\
&\qquad +|\DD \, (f-g)|_{\delta} \underbrace{\sum_{i, \alpha, \beta} \gamma^i_{\alpha, \beta} \Big| (a + f)^{\alpha}(a + g)^{\beta} - a^{\alpha + \beta} \Big|_{\delta} }_{:=S_2}.
\end{align*}
The sum $S_1$ has already been estimated in the proof of \eqref{DPhi-Phi}. For $S_2$, remark that
\begin{align*}
 \Big| (a + f)^{\alpha}(a + g)^{\beta} - a^{\alpha + \beta} \Big|_{\delta} &\leq (|a| + |f|_{\delta})^{|\alpha|} (|a| + |g|_{\delta})^{|\beta|} - |a|^{|\alpha| + |\beta|}\\
 &\leq (|a| + |f,g|_{\delta})^{|k| - 1} - |a|^{|k|-1}\\
 &\leq (|k| - 1)(|a| + |f,g|_{\delta})^{|k|-2} |f,g|_{\delta}.
\end{align*}
Indeed, for the first line, you just have to develop the product, simplify the term $a^{\alpha + \beta}$, use the triangle inequality and Proposition \ref{product}, and finally re-factorize. Our estimation does not depend on $i, \alpha, \beta$, so taking the sum, by \eqref{gammak},
\[
S_2 \leq |k|(|k|-1)(|a| + |f,g|_{\delta})^{|k|-2} |f,g|_{\delta}.
\]
Hence, the result.
\end{proof}

\end{appendices}
  \bibliography{../../biblio/bibliographie}
 \bibliographystyle{plain}
 \end{document}